\title[Infinite Frameworks and Operator Theory]{Infinite Bar-Joint Frameworks, Crystals and Operator Theory}
\author[John Owen]{J.C. Owen}
\address{D-Cubed, Siemens PLM Software, Park House,\\
Castle Park, Cambridge UK}
\email{owen.john.ext@siemens.com}
\author[Stephen Power]{S.C.  Power}
\address{Dept.\ Math.\ Stats.\\ Lancaster University\\
Lancaster LA1 4YF \\U.K. }
\email{s.power@lancaster.ac.uk}
\thanks{{\it Key words and phrases.} infinite bar-joint framework, vanishing flexibility, rigidity operator.}
\subjclass{52C75, 46T20.}
\theoremstyle{plain}
\newtheorem{thm}{Theorem}[section]
\newtheorem{cor}[thm]{Corollary}
\newtheorem{prop}[thm]{Proposition}
\newtheorem{lem}[thm]{Lemma}
\theoremstyle{definition}
\newtheorem{rem}[thm]{Remark}
\newtheorem{definition}[thm]{Definition}
\newtheorem{example}[thm]{Example}
\newcommand{\bC}{{\mathbb{C}}}
\newcommand{\bN}{{\mathbb{N}}}
\newcommand{\bR}{{\mathbb{R}}}
\newcommand{\bT}{{\mathbb{T}}}
\newcommand{\bZ}{{\mathbb{Z}}}
  \newcommand{\B}{{\mathcal{B}}}
  \newcommand{\C}{{\mathcal{C}}}
  \newcommand{\D}{{\mathcal{D}}}
  \newcommand{\E}{{\mathcal{E}}}
  \newcommand{\F}{{\mathcal{F}}}
  \newcommand{\G}{{\mathcal{G}}}
\renewcommand{\H}{{\mathcal{H}}}
  \newcommand{\K}{{\mathcal{K}}}
  \newcommand{\M}{{\mathcal{M}}}
  \newcommand{\N}{{\mathcal{N}}}
\renewcommand{\P}{{\mathcal{P}}}
  \newcommand{\R}{{\mathcal{R}}}
  \newcommand{\T}{{\mathcal{T}}}
  \newcommand{\U}{{\mathcal{U}}}
  \newcommand{\V}{{\mathcal{V}}}
\newcommand{\ol}{\overline  }
\newcommand{\coker}{\operatorname{coker}}
\begin{document}




\begin{abstract}
A theory of flexibility and rigidity is developed for general infinite bar-joint frameworks
$(G,p)$.
Determinations of nondeformability through vanishing flexibility are obtained as well
as sufficient conditions for deformability.
Forms of infinitesimal flexibility are defined in terms of the operator theory of the
associated infinite rigidity matrix $R(G,p)$. The matricial symbol function of an abstract crystal framework is introduced, being the matrix-valued function on the $d$-torus representing  $R(G,p)$ as a Hilbert space operator.
The symbol function is related to infinitesimal flexibility, deformability and isostaticity.
Various generic abstract crystal frameworks which
are in Maxwellian equilibrium, such as certain $4$-regular planar frameworks, are proven to be square-summably infinitesimally rigid as well as smoothly deformable in infinitely many ways.
The symbol function of a three-dimensional crystal framework
determines the infinitesimal wave flexes in models for the
low energy vibrational modes (RUMs) in material crystals.
For crystal frameworks
with inversion symmetry it is shown that the RUMS
appear in surfaces, generalising a result of F. Wegner \cite{weg} for
tetrahedral crystals.
\end{abstract}
\maketitle
\tableofcontents


\section{Introduction}

Infinite bar-joint frameworks appear frequently as idealised models in the analysis of deformations and vibration modes of amorphous and crystalline materials. See
\cite{gid-et-al}, \cite{ham-dov-gid-hei-win}, \cite{chu-tho}, \cite{goodwin2008},
\cite{weg} and \cite{wyart} for example and the comments below.
Despite these connections there has been no extended mathematical analysis of such models. Notions of rigidity, flexibility, deformability, constrainedness,
independence and isostaticity, for example, are usually employed either
in the sense of their  usage for
a finite approximating framework or in a manner drawn from experience and
empirical fact in the light of the application at hand.
It seems that a deeper understanding of the models is of considerable interest in
its own right and that a mathematical development may prove useful in certain applications. In what follows we shall provide formal definitions of the terms above
in quite a wide variety  of forms  and
we examine some of their inter-relationships and manifestations.

Suppose that one starts with a flexible square bar-joint framework in two dimensions and
that this is then extended periodically to create an infinite periodic bar-joint
network.
Is the resulting assemblage, with inextendible bars,
continuously flexible in two dimensional space?
A moment's reflection reveals a proliferation of
flexibility, such as sheering motions with one half of the network fixed.

However such movement is dramatically infinite and a natural second question is whether for
periodic frameworks such as these there are flexes for which the total joint movement is finite. The less obvious negative answer in these cases offers some satisfaction
in that it is consistent with so-called
Maxwell counting in the case where the average number of degrees of freedom of the joints matches the average
number of constraints per joint. (See Theorem 5.2 and Corollary 5.3.)

On the other hand if, for the grid example, one rigidifies alternate squares by adding diagonal bars, as shown, then the resulting structure
of corner linked rigid squares remains properly flexible, although now uniquely so. In fact the unique flex has an affinely contracting character (See Definition 4.3) with alternating
rotation of the squares.

More generally the flexibility of polytope networks in two and three dimensions continues to be of interest in the modeling  of crystals and  amorphous materials,  especially with regard to their
low frequency vibrational modes. Such modes appear, for example, in higher order symmetry
phases of tetrahedral crystals  and are  referred to
as rigid unit modes (RUMs).  Indeed in the paper of Giddy et al
\cite{gid-et-al} the alternating flex of the squares framework above has been
associated with vibrational modes in perovskite.
See also Hammond et al \cite{ham-dov-gid-hei-win},
Wegner \cite{weg}, as well as
Goodwin et al \cite{goodwin2008} for a useful overview. At the same time, in the modeling of
 amorphous materials, such as glasses,
there is interest in understanding the critical
probabilities that guarantee flexibility and rigidity for classes of randomly  constructed frameworks. See, for example, Chubynsky and Thorpe  \cite{chu-tho} for the recent determination of such probabilities in simulation experiments.

\begin{center}
\begin{figure}[h]
\centering
\includegraphics[width=3cm]{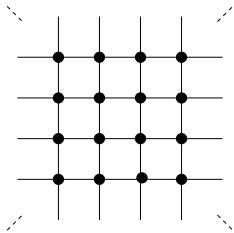}
\caption{The grid framework in the plane, $\G_{\bZ^2}$.}
\end{figure}
\end{center}

\begin{center}
\begin{figure}[h]
\centering
\includegraphics[width=3cm]{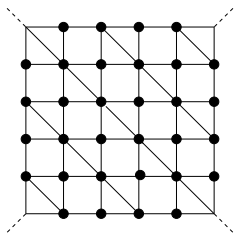}
\caption{The corner-joined squares framework, $\G_{sq}$.}
\end{figure}
\end{center}

Formally, a \textit{framework} in  ${\mathbb R}^d$ (or bar-joint framework,
or distance-constraint framework)
is a pair $\G =(G, p)$ where $G=(V,E)$
is a simple connected graph
and
$p=(p_1, p_2, \ldots )$ is a framework vector made up of framework points
$p_i$ in $\bR^d$ associated with the vertices $v_1, v_2, \dots $ of $V$.
The \textit{framework edges}  are the (closed) line segments $[p_i, p_j]$
associated with the edges $E$ of the graph $G=(V,E)$.  As the ellipsis suggest, we allow $G$ to be a countable graph.
We shall also define a \textit{crystal framework} $\C$   as a framework with translational symmetry which is generated by a connected finite {\it motif} of edges and vertices.
(See Definition 4.2.)

When $G$ is finite and the framework points are generically located in $\mathbb R^2$ then a celebrated theorem of Laman \cite{lam}, well-known in structural engineering and in the discrete
mathematics of rigidity matroids \cite{gra-ser-ser}, gives a simple combinatorial criterion for the minimal infinitesimal rigidity of the framework;  the graph itself satisfies Maxwell's counting rule $2|V| - |E| = 3$, and subgraphs $G'=(V', E')$ must comply with $2|V'| - |E'| \geq 3$.  This is a beautiful result since the rigidity
 here is the  noncombinatorial requirement that the kernel of an associated rigidity matrix $R(G,p)$
 has the smallest dimension (namely three) for some (and hence all) generic framework.
 On the other hand frameworks with global symmetries, or even with "symmetric elements" (such as parallel edges) are not generic, that is, algebraic dependencies do exist between the framework point coordinates. Such frameworks arise in  classical crystallography on the one hand  and in mathematical models in structural engineering and in materials science on the other.
See, for example,  Donev and Torquato \cite{don-tor}, Hutchinson and Fleck \cite{hut-fle}, Guest and Hutchinson \cite{gue-hut}
 and various papers in the  conference proceedings \cite{tho-dux}.

 The present paper develops two  themes. The first concerns a mathematical theory of deformability and rigidity for {general} infinite frameworks, with frequent attention to the case of periodic frameworks.
 There is, unsurprisingly, a great diversity of infinite framework
flexing phenomena and we introduce  strict terminology and some methods from functional analysis to capture some of this.
 In the second theme we propose an operator theory perspective for the infinitesimal
 (first order) flexibility of infinite frameworks.

Particularly interesting classes of infinite frameworks, from the point of view of flexibility, are those in the plane whose graphs  are $4$-regular and those in three dimensions whose graphs are $6$-regular. In this case the  graphs are in  Maxwell equilibrium, so to speak, and so in a generic framework realisation
any flex must activate countably many vertices.
This is also the case for
various periodic realisations such as the  kagome framework, $\G_{kag}$, formed by  corner-joined triangles in regular hexagonal arrangement, and  frameworks in three dimensions formed by pairwise corner-joined tetrahedra.
Despite being internally rigid in this way (Definition 2.18 (vi))
these frameworks admit diverse deformations. For example we note that the kagome framework
admits uncountably many distinct deformations and in Theorem 4.4 we note that
$\bZ^d$-periodic cell-generic grid frameworks in $\bR^d$ admit deformations
associated with affine transformations.

A significant phenomenon in the infinite setting is the appearance of
\textit{vanishing flexibility}. This means, roughly speaking, that the framework
 is a union of finite flexible subframeworks  but
the extent of flexibility diminishes to zero as the size of these subframeworks
increases, so that the infinite assemblage is inflexible. Elementary examples were indicated
in \cite{owe-pow-3} but we give more subtle examples here
which are due to flex amplification at second order distances through concatenation
effects. In particular there are  bounded infinitesimal flexes in periodic frameworks
that admit no continuous extensions and which do not arise as the derivative
of a smooth deformation.
We also note  that there are
$\bZ^2$-periodic crystal frameworks  which are somewhat paradoxical, being
indeformable despite the flexibility of all  supercell subframeworks.
On the other hand, in the positive direction, in Theorem \ref{t:aathm} we give a general result which identifies a uniform principle for the existence of a deformation. The proof uses the Ascoli-Arzela
theorem  on the precompactness
of equicontinuous families of local flexes. It remains an interesting open problem to determine necessary and sufficient conditions for the rigidity and bounded rigidity
of periodic planar frameworks.

The operator theory perspective for  frameworks was suggested  in  \cite{owe-pow-3}
as an approach to a wider understanding of  infinitesimal flexibility and rigidity.
In this consideration the rigidity matrix is infinite and determines  operators between various normed sequence spaces associated with nodes and with edges.
Also, in \cite{owe-pow-2} we have given a direct proof of the Fowler-Guest  formula \cite{fow-gue} for symmetric finite frameworks which is based on the commutation properties of the rigidity matrix as a linear transformation and this adapts readily to the infinite case and the rigidity operators of crystal frameworks.
Indeed, translational symmetry ensures that the rigidity
matrix $R(G,p)$  intertwines  the coordinate shift operations.
We consider square summable flexes and stresses and for distance regular
locally finite frameworks
$R(G,p)$ is interpreted as a bounded  operator between Hilbert spaces.
Also, enlarging to complex Hilbert spaces  the Fourier transform $\F R(G,p)\F^{-1}$ is identified as a multiplication operator
 \[
 M_\Phi : L^2(\bT^d) \otimes \bC^n \to  L^2(\bT^d) \otimes \bC^m
 \]
given by an $m \times n$ matrix-valued
function $\Phi(z)$ on the $d$-torus. The
function $\Phi$ for $\C$  is referred to   as the \textit{matricial symbol function}
associated with the particular generating motif. The  terminology  and notation is borrowed from standard usage for Toeplitz operators and multiplication operators (see \cite{bot-sil} for example). Many aspects of infinitesimal flexibility and isostaticity are expressible and analysable in terms of the matricial symbol function and its associated operator theory.
For example a straightforward consequence of the operator theoretic approach is the  square summable isostaticity of various nondegenerate regular frameworks that satisfy Maxwell counting, such as grid frameworks and the kagome framework.

An explicit  motif-to-matrix function algorithm  is given
for the progression $$\C =(G,p)\to R(G,p) \to \Phi(z).$$
Furthermore the identification
of infinitesimal periodic-modulo-phase flexes and their multiplicities
is determined by the degeneracies of $\Phi(z)$ as $z$ ranges on the $d$-torus.
In particular, the function
\[
\mu(z) := \dim \ker \Phi(z) : \bT^d \to \bZ_.
\]
gives a determination of the mode multiplicity of  periodic-modulo-phase
infinitesimal flexes.

In the discussions below we are mainly concerned with  properties of
mathematical bar-joint frameworks. (The framework bars are indestructibly inextensible, the joints are located deterministically, they maintain perfect, frictionless fit  and may even coincide.)  Nevertheless,  analysis of matricial symbol functions and their degeneracies are
particularly relevant to the description and analysis  of Rigid Unit Modes in material crystals.
We show that for crystal frameworks
with inversion symmetry  the set of RUMS
is a union of  surfaces. This  generalises and provides an alternative
perspective for a recent result of Wegner \cite{weg} for
tetrahedral crystals.

Operator theory methods have proven beneficial in many areas of mathematics
and applications. This is evidently the case for multi-linear systems theory  and in control theory for example. Infinite rigidity matrix analysis seems to possess some similitudes
with these areas and
it seems to us that here too the operator turn will be a useful one.

The development is as follows.
Section 2 gives a self-contained account of continuous flexibility and rigidity, and
vanishing flexibility and one-sided flexibility is proven for various periodic  infinite frameworks. Forms of flexibility, such as bounded flexes, square summable flexes, summable flexes and vanishing flexes are defined and determined for some specific examples. Sufficient conditions are obtained for the existence of a smooth flex and a flex extension problem for generic finite frameworks is posed. A positive resolution of this problem would provide a natural extension  of  Laman's theorem to infinite frameworks. In Section 3 we consider infinitesimal theory for general infinite frameworks and determine a number of rigidity operators and their flex and stress spaces. (The topic is taken up in more detail for crystal frameworks in Section 5.)
In Section 4 we consider (abstract) crystal frameworks in two or three dimensions. These are generated by a motif and a discrete translation group. Various forms of deformations are considered, such as strict periodic flexibility, flow-periodic flexibility and flexes with reduced periodicity and symmetry.
Also we indicate the flat torus model for crystal frameworks and recent results of Ross and Whiteley in this direction, including a periodic analogue of Laman's theorem.
In the final section we consider the matricial symbol function approach and various examples. In particular
we determine the (unit cell) infinitesimal wave flex multiplicities
for the kagome net framework by factoring the determinant of the matricial symbol function.

\textit{Acknowledgments.} Some of the developments here have benefited from discussions and communications with Robert Connelley, Patrick Fowler, Simon Guest, Elissa Ross and Walter Whiteley
during and following the Summer Research Workshop on "Volume Inequalities and Rigidity",
organised by K\'aroly Bezdek,
Robert Connelly,
Bal\'azs Csik\"os and
Tibor Jord\'an,  at the Department of Geometry, Institute of Mathematics,
Eotvos Lorand University in July 2009.

\section{Infinite Bar-joint Frameworks}

In this section we give a self-contained rigourous development of infinite frameworks and examine the nonlinear aspects of their flexibility by continuous deformations and their associated rigidity.
In the next section we  consider infinitesimal flexibility and rigidity in
a variety of forms.

\subsection{Continuous flexibility and rigidity} We first define continuous flexes and continuous rigidity. The latter  means, roughly speaking, that the framework admits  no proper deformations that preserve the edge lengths.
The definition below gives straightforward generalisations of  terms used for finite frameworks. In that case we note that a continuous flex is often referred to as a finite flex
while in engineering models it is referred to as a finite mechanism.

Unless we specify otherwise
we shall assume that the frameworks under consideration are \textit{proper} in that the
framework points do not lie on a hyperplane in the ambient space $\bR^d$ and that the framework edges $[p_i, p_j]$ have nonzero lengths $|p_i-p_j|$.

\begin{definition}
Let  $(G, p)$ be an infinite framework in $\bR^2$, with
connected abstract graph  $G=(V, E)$,  $V = \{v_1, v_2, \dots
\}$ and $p=(p_1,p_2, \dots )$.

(a) A base-fixed continuous
flex, or, simply, a flex of $(G,p)$, is a function \\ $p(t) = (p_1(t), p_2(t),
 \dots )$
 from  $[0,1]$ to $\prod_V \bR^2$ with the following
 properties;

(i) $p(0) = p$,

 (ii) each coordinate function $p_i: [0,1] \to \bR^2$ is  continuous,

(iii) for some \textit{base edge} $(v_a,v_b)$ with $|p_a - p_b| \neq 0$,
$p_a(t) = p_a(0)$ and $p_b(t) = p_b(0)$ for all $t$,

(iv) each edge distance is conserved; $|p_i(t) - p_j(t)| = |p_i(0)
- p_j(0)| $ for all edges $(v_i , v_j)$, and all $t$, and


(v) $p(t)\neq p$ for some $t \in (0,1]$.

(b) The framework $(G, p)$ is  \textit{flexible}, or more precisely,
\textit{continuously flexible}, if it possesses a  base-fixed   continuous
flex.

(c) The framework $(G, p)$ is \textit{rigid}, or continuously
rigid, if it is not flexible.
\end{definition}

Similarly one defines base-fixed continuous flexes  and continuous rigidity
for  proper frameworks in $\bR^d$ by replacing a base edge by an appropriate set of framework points with maximal affine span.

The simplest kind of continuously rigid framework in the plane is one which
is a union  of continuously rigid finite frameworks. In particular
the following
theorem follows simply from the theorem of Laman indicated in the introduction.

\begin{thm} Let $G$ be a connected graph which is the union of a sequence of
finite Laman graphs. Then every
generic  realisation $(G,p)$  in the plane is continuously rigid.
\end{thm}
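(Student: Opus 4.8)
The plan is to reduce the statement to Laman's theorem for \emph{finite} frameworks, using two elementary closure properties: restricting a generic configuration to any finite subset of vertices is again generic, and restricting a base-fixed continuous flex to any subframework that contains the base edge is again a base-fixed continuous flex of that subframework. Granting these, it suffices to prove a finite lemma: a generic realisation $(H,q)$ in $\bR^2$ of a finite Laman graph $H$, with a nonzero base edge $(v_a,v_b)$ pinned, admits no nontrivial continuous flex. I would prove this by invoking Laman's theorem (generic realisations of Laman graphs are infinitesimally rigid) together with the Asimow--Roth theorem (for generic frameworks, infinitesimal rigidity is equivalent to continuous rigidity); then any continuous flex $q(t)$ with $q(0)=q$ has the form $q_i(t)=A(t)q_i+c(t)$ for a continuous path $(A(t),c(t))$ of Euclidean isometries starting at the identity, so $A(t)\in SO(2)$, and pinning $q_a,q_b$ forces $A(t)(q_a-q_b)=q_a-q_b$ with $q_a\neq q_b$, hence $A(t)=I$, $c(t)=0$, and $q(t)\equiv q$.

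For the infinite framework, write $G=\bigcup_{n\ge 1}G_n$ as an increasing union of finite Laman graphs $G_n=(V_n,E_n)$ (this is how I read ``union of a sequence''; see below). Suppose $(G,p)$ is generic and, for contradiction, that $p(t)$ is a base-fixed continuous flex with base edge $(v_a,v_b)$. The base edge lies in some $E_m$. For an arbitrary vertex $v_i$ choose $k$ with $v_i\in V_k$ and set $N=\max(m,k)$, so that the finite Laman graph $G_N$ contains both $v_i$ and the base edge. Then $p(t)|_{V_N}$ is a base-fixed continuous flex of the generic finite framework $(G_N,p|_{V_N})$, so by the finite lemma it is constant; in particular $p_i(t)=p_i$ for all $t$. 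Since $v_i$ was arbitrary, $p(t)\equiv p$, contradicting clause (v) in the definition of a flex. Hence $(G,p)$ is continuously rigid.

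The step that needs the most care is the finite lemma, specifically the passage from the combinatorial Laman count to genuine continuous rigidity of the finite pieces: this is exactly where the Asimow--Roth equivalence (regularity of the configuration variety at a generic point) is used, and it cannot be replaced by the infinitesimal count alone. I would also be careful about the meaning of ``union of a sequence of finite Laman graphs'': the argument needs that every vertex, together with the base edge, lies in a single generically rigid member of the sequence, which is automatic for an increasing union but can fail for an arbitrary cover --- for instance two triangles sharing a single vertex form a flexible graph that is nonetheless the union of two Laman graphs --- so the increasing (equivalently, ``rigidity-cofinal'') reading is the one intended.
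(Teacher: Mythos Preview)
Your proposal is correct and follows essentially the same route as the paper, which offers almost no proof beyond the remark that the result ``follows simply from the theorem of Laman,'' the point being that a union of continuously rigid finite subframeworks is continuously rigid. You have supplied the details the paper leaves implicit: the restriction of a generic configuration to a finite vertex set is generic, the restriction of a base-fixed flex to a subframework containing the base edge is a base-fixed flex, and Laman's theorem together with the Asimow--Roth equivalence yields continuous rigidity of each finite piece. Your caution about reading ``union of a sequence'' as an \emph{increasing} union is well placed and matches the paper's own convention (its Definition of ``sequentially $P$'' requires an increasing chain of vertex-induced subgraphs), and your counterexample of two triangles sharing a vertex shows why this reading is necessary.
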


By \textit{generic}, or, more precisely, \textit{algebraically generic},  we mean, as is
usual, that the coordinates of any finite set of framework
points is algebraically independent over the rational numbers.
Unlike the case of finite frameworks
it is possible to construct two generic frameworks with the same abstract graph
one of which is flexible and one of which is rigid.
Accordingly it seems appropriate to formulate the following definition to extend the usual
usage.

\begin{definition}
An infinite simple connected graph $G$ is said to be rigid, or generically rigid, for two  dimensions, if every generic framework $(G,p)$ in the plane is rigid.
\end{definition}

Note that if $G$ is rigid and if $H$ is a containing graph for
$G$ with the same vertex set then every generic framework $(H,p)$ in the plane is rigid.

It seems likely that the converse to the theorem above holds. That is, if $H$ does not contain
 a \textit{sequentially Laman} graph (in the sense below)  with the same vertex set,  then
 $H$ is not generically rigid. We comment more on this later in Section 2.6.

Rigidity and flexibility are  properties of the entire framework and  it is such
entire features and their inter-relationships that are of primary interest in what
follows. One would like to understand the relationship with
small scale or local structure, such as local counting conditions and local connectivity. Additionally, as above, one would like to relate entire properties to
sequential features that pertain to an exhausting chain of finite subframeworks and for this
the following definition is helpful.

\begin{definition}
If $P$ is a  property for a class of finite, simple, connected graphs (resp. frameworks) then
a graph $G$ (resp. framework  $\G= (G,p)$) is \textit{sequentially $P$} or $\sigma$-$P$ if $G$
is the union of graphs in some increasing sequence of vertex induced finite subgraphs
$G_1 \subseteq G_2 \subseteq \dots ,$ and
each graph $G_k$ (resp. framework $(G_k ,p)$) has property $P$.
\end{definition}

For example,
we may refer to an infinite graph as being $\sigma$-Laman, or $\sigma$-Laman$-1$
and an infinite framework as being $\sigma$-rigid.
To say that an infinite framework is $\sigma$-flexible, or sequentially flexible, is rather vacuous
since it usually prevails for trivial reasons.
(One can construct countably infinite edge complete frameworks that may fail to be so
but all our examples are sequentially flexible.) The more interesting  property is the
failure  of {sequentially rigidity} as in the following definition.

\begin{definition}
A framework $\G$ is said to have \textit{vanishing flexibility} if it is
continuously rigid but not $\sigma$-rigid.
\end{definition}

An important topic in finite rigidity frameworks is  that of \textit{global rigidity}, also
called unique rigidity. This holds, in the two dimensional setting,
if there is, up to congruency, only one embedding of the framework in $\bR^2$. This implies,
for example, that
these frameworks admit no foldings, and indeed have  unique
diagrams up to rotations and reflection.  One can extend the term to infinite
frameworks but we do not consider this issue here at all.
One might be tempted to say that a rigid infinite framework, especially one with vanishing flexibility,
is globally rigid, but we refrain from doing so because of conflict with this usage.

The term "global" for global rigidity is natural since rigidity for finite frameworks
is equivalent to the "local" property that "nearby equivalent frameworks are congruent". That is, if  there  exists $\epsilon >0$ such that if $(G,p')$ is a finite framework with $|p_i-p_i'|<\epsilon$, for
all $i$, and if $(G,p')$ is \textit{equivalent}
to $(G,p)$, in the sense that corresponding edges have the same length, then
$(G,p')$ and  $(G,p)$ are {congruent}. See, for example,   Gluck \cite{glu} and Asimow and Roth \cite{asi-rot}.

 An infinite simple graph $G$ is \textit{locally finite} if for every vertex $v$ there are finitely many incident
edges. Amongst such graphs are those for which there is an upper bound to the degree of the vertices, as in the case of the graphs of crystal frameworks. Within this class a graph $G$ is said to be \textit{$r$-regular} if every vertex has degree $r$. We remark that the
 theory of tilings provides a wealth of examples of planar frameworks which are $4$-regular.

\begin{rem} {\rm In what follows we consider only locally finite frameworks. Without this assumption
it is possible to construct quite wildly flexing planar linkages.
In fact, given a continuous function $f: [0,1] \to \bR^2$
one can construct
an infinite linkage, in the sense of the definition below, and a base-fixed flex $p(t)$
with a motion $p_v(t)$ for a particular vertex $v$ that is equal to $f(t)$. This includes the possibility of space filling curves. This is a consequence of a continuous analogue
of a well known theorem of Kempe which asserts that any finite algebraic curve in
the plane can be simulated by a finite linkage.
For more details see Owen and Power \cite{owe-pow-kempe}}.
\end{rem}

\subsection{Linkages}
The removal of a framework edge from a rigid framework  may result in flexibility
which is, roughly speaking, of a one-dimensional nature. We reserve the term \textit{linkage} for such a mathematical object, which we formally specify in the next definition.
We remark that finite frameworks are also referred to as linkages,
particularly when they are flexible, perhaps with several degrees of freedom, but this should not cause confusion.

 A {\it two-sided  continuous flex} $p(t)$ of $(G,p)$ is defined as above but for the replacement of $[0,1]$ by $[-1,1]$. The following formal definition of an infinite linkage
 reflects the fact that the
 initial motion of a base-fixed
linkage is uniquely determined by the angle change at any flexible joint.

\begin{definition}
A linkage in $\bR^2$ is a finite or
infinite connected framework $\G = (G,p)$ in $\bR^2$
for which there exists a  continuous two-sided base-fixed flex $p(t)$ with
framework edges $[p_i, p_j],[p_j, p_k]$ such that
the cosine angle function
\[
g(t)=\langle p_i(t)-p_j(t), p_k(t)-p_j(t)\rangle
\]
is strictly increasing on $[-1,1]$, and such that
 $p(t)$ is the unique two-sided flex $q(t)$ of $\mathcal{G}$ with
$q_l(t)=p_l(t)$, for $l=i,j,k$.
\end{definition}

Many interesting finite linkages were considered in the nineteenth century in connection
with mechanical linkages. See, for example, Kempe \cite{kem}.
Note however that  the definition is liberal
in allowing coincident joints and self-intersecting flexes.
Also the definition refers to local deformation behaviour and this  does not rule
out the possibility of bifurcations occurring in a parameter extension of the given flex.

It is a simple matter to construct diverse infinite linkages
by   tower constructions or progressive assembly.
(See, for example, the Cantor tree frameworks of \cite{owe-pow-3}.)
However, some such constructions lead to frameworks
with vanishing flexibility and so are not linkages in this case.
 An elementary illustration is given
in Figure 4 wherein a two-way infinite  rectangular strip linkage
is augmented by  adding
flex-restricting cross braces in an alternating fashion.
 If the brace lengths tend to
the diagonal length from above then the  infinite framework is  rigid.
Evidently in this case the triangle inequality is playing a role in isolating one real solution to the \textit{solution set} $V(G,p)$ defined below. One can also construct
examples in which this isolation is less evident, with all joint angles bounded away from zero and $\pi /2$ for example.

A more interesting and subtle form of vanishing flexibility is due to
progressive flex amplification rather than local flex restrictions.
Roughly speaking, if a small flex is initiated at a particular joint and the flex
propagates in some amplifying manner, then the triangle inequality at some far remove
may prohibit any further increase. If the framework is infinite then
no local joint flex may be possible at all.
The strip framework of concatenated levers in Figure 5 gives an example
where the amplification is evident, while  the rigidity of the
strip framework in Figure 6 and the trapezium strip in Figure 7 is less evident.
The lever framework has a natural infintesimal flex, in the sense of Section 3, which is unbounded. The corresponding flexes of Figures 6 and 7 however, are bounded with amplification unfolding as a second order effect.
This is proven in the next subsection.


\begin{figure}[h]\label{f:rectstrip}
\centering
\includegraphics[width=8cm]{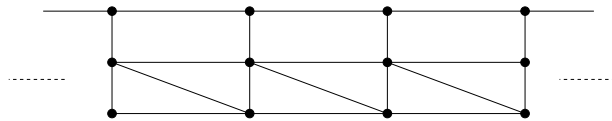}
\caption{An infinite rectangle strip linkage.}
\end{figure}

\begin{center}
\begin{figure}[h]\label{f:rectstripbraced}
\centering
\includegraphics[width=8cm]{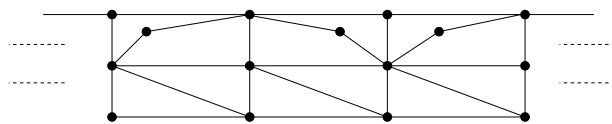}
\caption{A restricted rectangle strip.}
\end{figure}
\end{center}

\begin{center}
\begin{figure}[h]\label{f:traptilt}
\centering
\includegraphics[width=9cm]{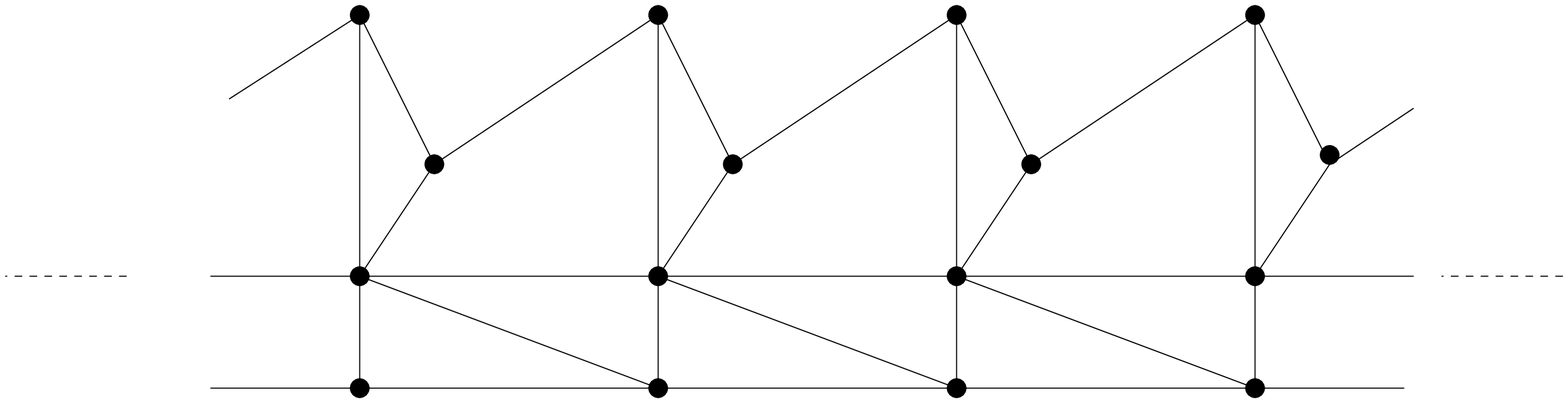}
\caption{Rigid but not $\sigma$-rigid.}
\end{figure}
\end{center}

\begin{center}
\begin{figure}[h]\label{f:kagomestrip}
\centering
\includegraphics[width=9cm]{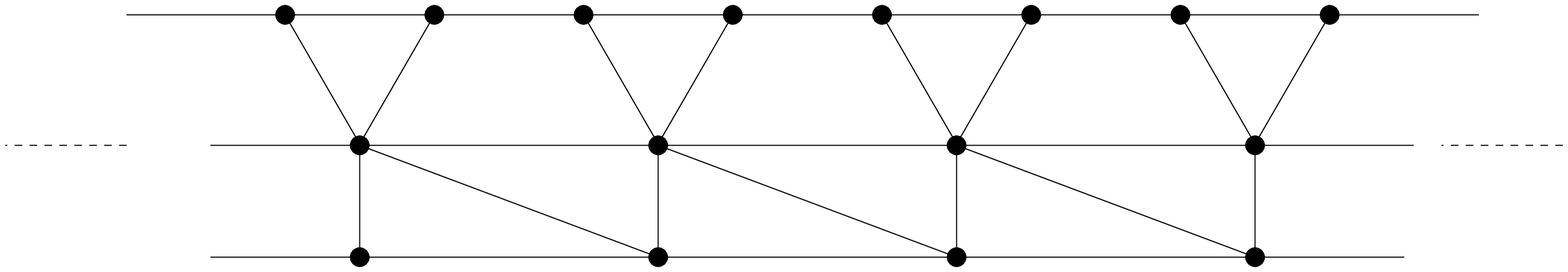}
\caption{Rigid but not $\sigma$-rigid.}
\end{figure}
\end{center}

\begin{center}
\begin{figure}[h]\label{f:kagomestrip1}
\centering
\includegraphics[width=9cm]{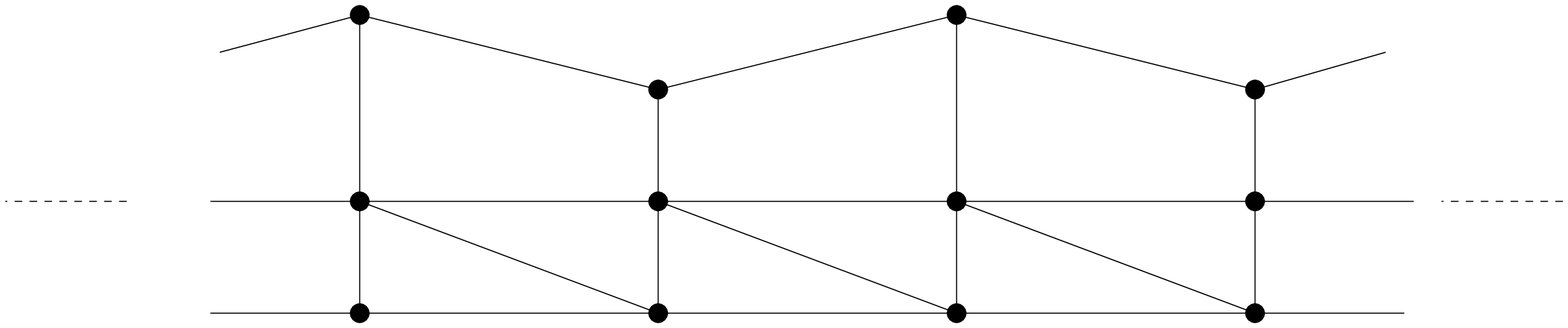}
\caption{Rigid but not $\sigma$-rigid.}
\end{figure}
\end{center}

\begin{center}
\begin{figure}[h]\label{f:kaghalfstrip}
\centering
\includegraphics[width=8cm]{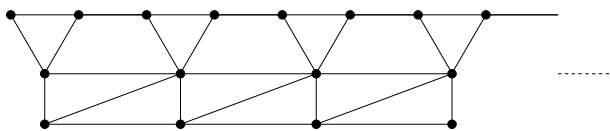}
\caption{A periodic half-strip which is only right-flexible.}
\end{figure}
\end{center}

It is a straightforward matter to incorporate the vanishing flexibility
of the strips above as subframeworks
of a $\bZ^2$-periodic framework. This process is indicated in
Figures 9, 10 and 11 below, where the infinite frameworks are determined
as the periodic extensions of the given unit cell.
Figure 9 shows a linkage formed as a "fence lattice" composed
of infinite  horizontal and vertical $\sigma$-rigid bands. Figure 10
shows an analogue where the infinite  bands have been replaced by
rigid strip frameworks. Figure 11 is an elaboration
of this in which cross braces have been introduced to remove
the flexibility. (Only one edge is needed  for this whereas the example
given is periodic.) The additional  degree 2 vertex in the cell ensures
 that the framework is not $\sigma$-rigid, while the infinite bands remain vanishingly rigid.

Let us note that for the framework in Figure 11,
with its curious mixture of rigidity and flexibility, one can add any finite number of additional degree 2 vertices without changing the rigidity of the framework.
In particular we have a construction that proves the following proposition.

\begin{center}
\begin{figure}[h]
\centering
\includegraphics[width=4cm]{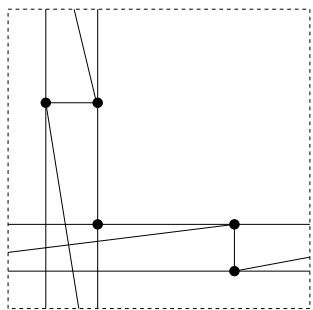}
\caption{Unit cell for a  "fence lattice" linkage.}
\end{figure}
\end{center}

\begin{center}
\begin{figure}[h]
\centering
\includegraphics[width=4cm]{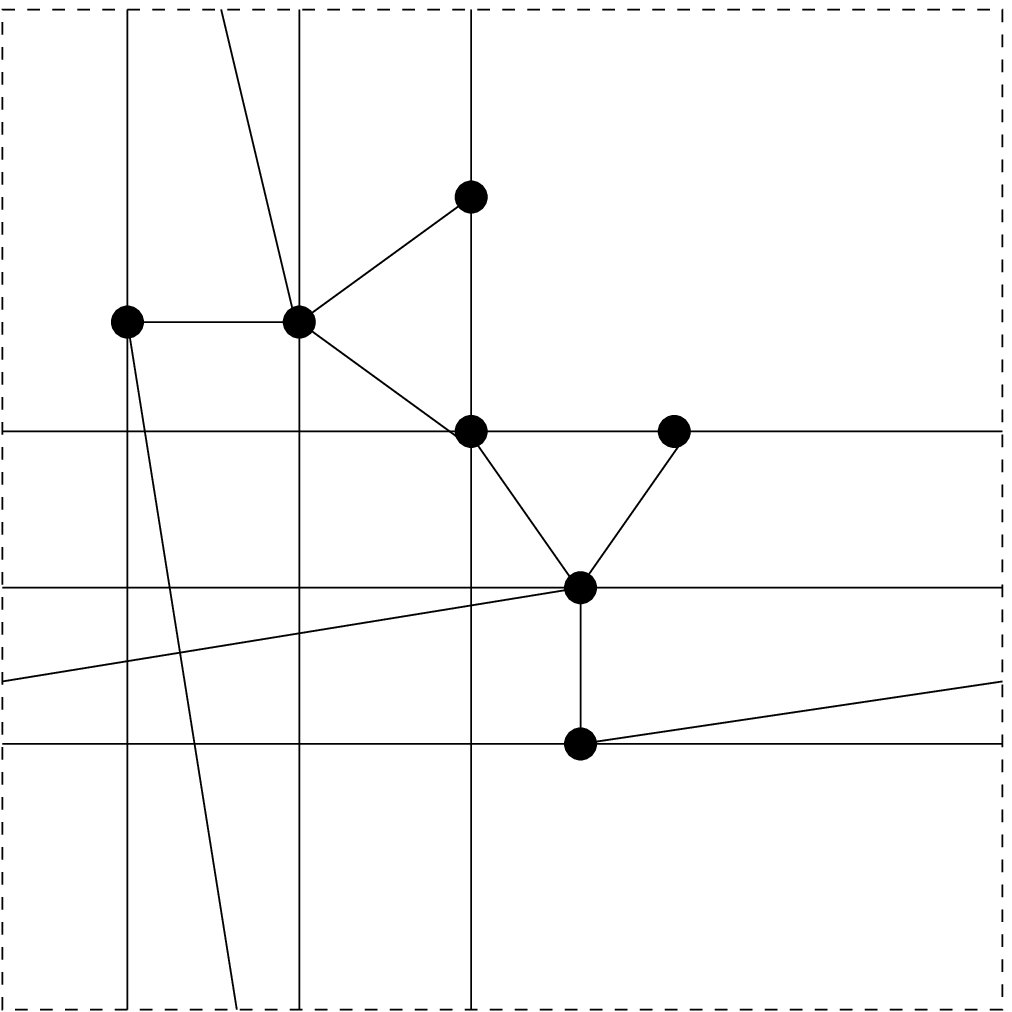}
\caption{Unit cell for  a modified fence lattice linkage.}
\end{figure}
\end{center}

\begin{center}
\begin{figure}[h]
\centering
\includegraphics[width=4cm]{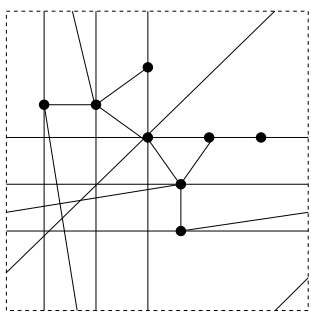}
\caption{Unit cell for a rigid periodic framework which is not $\sigma$-rigid.}
\end{figure}
\end{center}

\begin{prop}
Let $c>2$. Then there is a $\bZ^2$-periodic framework in $\bR^2$ which is rigid, which is not $\sigma$-rigid and for which the average vertex degree is less than $c$.
\end{prop}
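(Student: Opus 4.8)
The plan is to take the $\bZ^2$-periodic framework $\G=(G,p)$ of Figure 11, which by the construction described there is continuously rigid but not $\sigma$-rigid, and to drive its average vertex degree down towards $2$ by grafting onto it, one per unit cell, a long chain of degree-$2$ vertices that is forced to sit rigidly. Concretely, I would fix an edge $[a,b]$ of the generating motif of $\G$, set $L=|p_a-p_b|$, and for each $k\ge1$ let $\G_k$ be the $\bZ^2$-periodic framework obtained by adjoining in every cell, in addition to the edge $[a,b]$ itself, a path $a-w_1-\cdots-w_k-b$ with $w_1,\dots,w_k$ placed at $k$ distinct interior points of the line segment $[p_a,p_b]$, chosen (periodically) to avoid the finitely many framework points of $\G$ lying on that segment. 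Then $\G_k$ is a proper connected $\bZ^2$-periodic framework, each $w_j$ has degree $2$, and if the motif of $\G$ has $n_0$ vertices and $m_0$ edges then the motif of $\G_k$ has $n_0+k$ vertices and $m_0+k+1$ edges, so the average vertex degree of $\G_k$ equals $2(m_0+k+1)/(n_0+k)$. This tends to $2$ as $k\to\infty$, hence is less than the prescribed $c>2$ once $k$ is large enough, and I fix such a $k$.

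For rigidity of $\G_k$ I would take the base edge to lie in $G$. Any base-fixed continuous flex of $\G_k$ restricts on the vertices of $G$ to a base-fixed, length-preserving motion of $\G$, which must be trivial since $\G$ is continuously rigid; so every vertex of $G$ is immobile. Then in each cell the two ends of the grafted path stay at distance $L$, which is exactly the sum of the lengths of the $k+1$ bars of that path, so equality in the triangle inequality pins the path in its fully extended, collinear configuration and every $w_j$ is immobile as well. Thus $\G_k$ has no nontrivial flex, i.e.\ it is continuously rigid.

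For the failure of $\sigma$-rigidity I would take an increasing exhaustion $G_1\subseteq G_2\subseteq\cdots$ of $G$ by finite vertex induced subgraphs with every $(G_n,p)$ flexible, which exists because $\G$ is not $\sigma$-rigid, and enlarge each $G_n$ to the finite vertex induced subframework $G_n'$ of $\G_k$ obtained by adjoining every grafted path whose two endpoints already lie in $G_n$; these exhaust $\G_k$. It is important here that $a,b$ are the ends of a genuine motif edge: that edge and all its translates occurring in $G_n$ are then present, so under any flex $q(t)$ of $(G_n,p)$ the two ends of each grafted path stay at distance $L$, and placing each internal path vertex at the point dividing the moving segment in the fixed ratio $j:(k+1-j)$ extends $q(t)$ to a nontrivial base-fixed flex of $(G_n',p)$. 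Hence every $(G_n',p)$ is flexible, so $\G_k$ is not $\sigma$-rigid, and the proposition follows.

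I expect the \emph{main difficulty} to lie not in any of these routine verifications but in the tension between the two jobs the grafted chains must do. Pushing the average degree close to $2$ essentially forces one to graft chains of degree-$2$ vertices rather than small rigid gadgets --- a finite rigid planar framework has at least $2|V|-3$ bars, so large rigid pieces have average degree near $4$ and grafting them cannot lower the average towards $2$ --- yet a bare degree-$2$ chain is flexible, and can be made to behave rigidly only by pinning it collinear between two endpoints held exactly one chain-length apart. That pinning must be effective in the infinite framework (to keep it rigid) but ineffective in the finite windows (or $\sigma$-rigidity would be regained); grafting each chain across an already present motif edge is precisely what reconciles the two, since in a finite window that edge holds the chain taut and frozen but harmless, while the residual flexibility of the vanishingly rigid strips of Figure 11 keeps the window flexible.
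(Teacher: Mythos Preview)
Your construction and its rigidity argument are correct and match the paper's approach: graft degree-$2$ vertices onto the Figure 11 framework to drive the average degree towards $2$; the taut collinear chain is pinned once its endpoints are. There is, however, a logical gap in your non-$\sigma$-rigidity step. By Definition 2.4, $\sigma$-rigid means that \emph{some} exhaustion has every term rigid, so ``not $\sigma$-rigid'' means that \emph{no} exhaustion has every term rigid. You exhibit one exhaustion $\{G_n'\}$ of $\G_k$ with every term flexible and conclude that $\G_k$ is not $\sigma$-rigid; but that is a non-sequitur, since a framework can in principle admit both a fully flexible exhaustion and a separate fully rigid one. (Your earlier inference in the other direction, that ``$\G$ not $\sigma$-rigid'' yields a flexible exhaustion of $\G$, happens to be correct but is itself nontrivial and rests on the characterisation below.)

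The repair stays inside your construction. The condition ``$\G$ not $\sigma$-rigid'' is equivalent to the existence of a finite vertex-induced $H_0 \subseteq G$ such that every finite vertex-induced $H \supseteq H_0$ in $G$ is flexible (otherwise one builds a rigid exhaustion inductively). The same $H_0$, viewed in $\G_k$, has the analogous property there. Indeed, let $H \supseteq H_0$ be finite and vertex-induced in $\G_k$. For each cell, either $H$ contains the grafted path only partially, in which case some $w_j$ has degree at most one in $H$ and $(H,p)$ is flexible outright; or $H$ contains that path in full together with $a,b$ and hence the edge $[a,b]$, in which case your own flex-extension argument lifts any flex of $(H\cap G,p)$---which exists since $H\cap G \supseteq H_0$ is vertex-induced in $G$---to a flex of $(H,p)$. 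Hence every finite vertex-induced $H \supseteq H_0$ in $\G_k$ is flexible, so $\G_k$ admits no rigid exhaustion and is not $\sigma$-rigid.
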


One can readily extend this fanciful idea in various ways to obtain
such structures in higher dimensions. For example, start with a one-dimensionally  periodic $\sigma$-rigid girder in 3D and augment it with
trapezium "tents" of alternating height to creates vanishingly rigid girders.
Also periodically interpolate any number of degree two vertices into the tent top edges
without removing the vanishing flexibility.
Join infinitely many such component girders periodically at appropriate tent-top \textit{edges} to create a fence framework and add linear jointed cross braces to create, finally,  a 2D periodic grid which is continuously rigid in 3D, which is not $\sigma$-rigid and which has average coordination number arbitrarily close to two.

\subsection{Relative rigidity and the extension of flexes.}

For a finite or infinite framework $\G= (G,p)$ in $\bR^2$ define the function
\[
f_G : \prod_V \bR^2 \to \prod_E \bR, \quad f_G(q) = (|q_i-q_j|^2)_{e=(v_i,v_j)}.
\]
This is the usual \textit{edge function} of the framework and depends only on the abstract graph
$G$.

\begin{definition}
The solution set, or { configuration space}, of a framework $\G=(G,p)$, denoted  $ V(G,p)$, is the set  $ f_G^{-1}(f_G(p)). $ This is the set of all framework vectors $q$ for $G$ that satisfy the distance constraints equations
\[
 |q_i-q_j|^2 =  |p_i-p_j|^2, \mbox{  for all edges } {e=(v_i,v_j)}.
\]
\end{definition}

In general the  solution set of an infinite framework  need not be a real algebraic variety
even when it is "finitely parametrised". In less wild situations
it can be useful to relate $ V(G,p)$ to the algebraic variety
$V(H,\pi_H(p))$ associated with a finite subgraph $H$ of $G$, or with an elementary subgraph
such as a tree, or even a set of vertices.

\begin{definition}
An infinite  bar-joint framework $(G,p)$ in $\bR^d$ is rigid over a subframework $(H, \pi_H(p))$ if
 every continuous flex of $(G,p)$ which is constant valued on $(H, \pi_H(p))$ is constant.
Similarly, if $H$ is a subgraph of a  countable connected simple graph $G$
then $G$ is \textit{rigid over
$H$}, or generically rigid over $H$ if, for every generic frameworks $(G,p)$,  every continuous flex of $(G,p)$ which is constant-valued on $(H, \pi_H(p))$ is constant.
\end{definition}

We may also form the following associated notions.

\begin{definition}
An infinite framework $(G,p)$ in $\bR^d$ is  finitely determined  if it is rigid over $(H,p)$ for some finite subgraph $H$ and is finitely flexible if it is flexible and finitely
determined.
\end{definition}


Finite flexibility in the sense above is a strong property in which
paths from $p$ in the solution set
$V(G,p)$ are determined near $p$ by the finite algebraic variety $V(H,p)$.
Note that  the term "infinitely flexible" is not appropriate  to describe a flexible framework
which is not finitely flexible since it is possible to construct
linkages, in our formal sense, which are not finitely flexible.
This is the case for the periodic framework in  Figure 12 which is a linkage because of
partial vanishing flexibility.

\begin{center}
\begin{figure}[h]\label{f:kagstripflex}
\centering
\includegraphics[width=8cm]{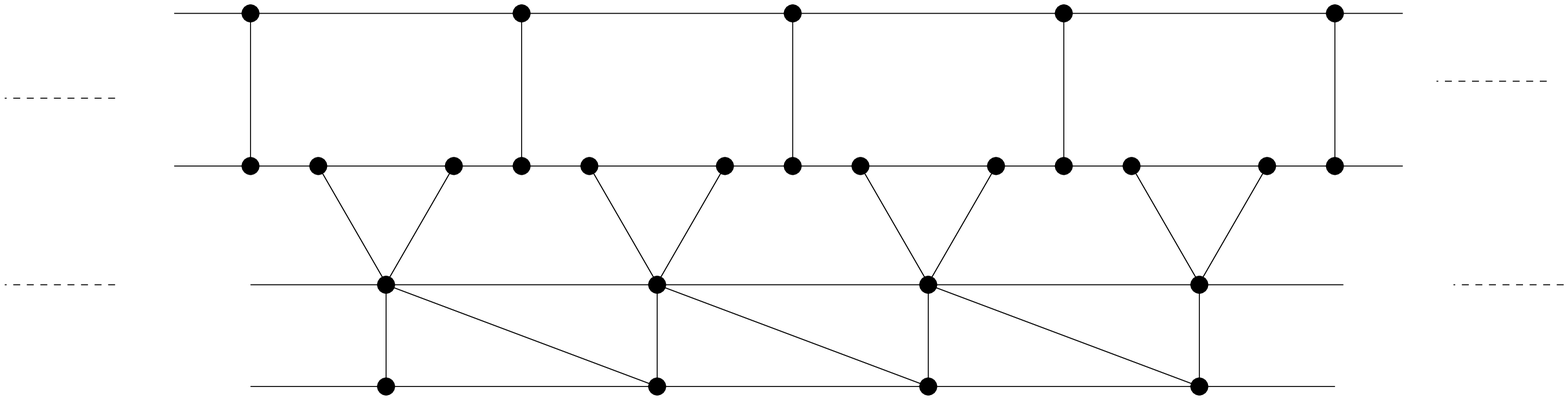}
\caption{A linkage which is not finitely determined.}
\end{figure}
\end{center}

An important class of frameworks which appear in
 mathematical models  are those that are distance regular.

\begin{definition}
 A framework $\G= (G,p)$ in $\bR^d$ is distance-regular
if there exist $0< m < M$ such that for all edges $(i,j)$,
\[
m < |p_i-p_j| \leq M.
\]
\end{definition}

For such a framework $(G,p)$ it is natural to consider
the nearby frameworks with the same graphs but with slightly perturbed
framework points (and therefore edge lengths). If a property holds for all such frameworks,
for some perurbation distance $\epsilon$ then we call such a property
a \textit{stable property} for the the framework.

Formally,  an \textit{$\epsilon$-perturbation} of a distance regular framework
 $\G= (G,p)$ is a framework  $\G'= (G,p')$ for which
$ |p_i-p_i'| < \epsilon$, for all corresponding framework points.
Recall that a finite framework in $\bR^d$ is said to be \textit{$\epsilon$-rigid} if
it is congruent to every \textit{equivalent} $\epsilon$-perturbation.
Let us say that a general
framework is \textit{perturbationally rigid} if it is {$\epsilon$-rigid}
for some $\epsilon$. It is a well-known
fact that perturbational rigidity
and rigidity are equivalent in the case of algebraically generic
finite frameworks \cite{asi-rot}, \cite{glu}. However, it is straightforward to see
that this equivalence thoroughly fails for general infinite frameworks (see \cite{owe-pow-3}).

\begin{definition}
Let $\G$ be a distance-regular framework. Then $\G$  is stably  rigid (resp stably flexible)
if it is rigid (resp. flexible) and for sufficiently small $\epsilon >0$
every  $\epsilon$-perturbation of $\G$
is rigid (resp. flexible).
\end{definition}

Likewise, if $P$ is any particular property of a distance-regular infinite framework
then we may say that $\G$ is \textit{stably} $P$ if, for some $\epsilon > 0$, the property
$P$ holds for all $\epsilon$-perturbations.

\begin{prop}
The periodic trapezium strip frameworks, with alternating unequal heights $a>b>o$, are rigid.
In particular the rectangle strip linkage (of Figure 7) is not stably flexible.
\end{prop}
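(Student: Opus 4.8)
The plan is to show that a nonconstant base-fixed continuous flex of the trapezium strip would be forced to grow without bound along the strip, and hence would violate a distance constraint in a cell far from the base edge. I would set the strip up as two-way infinite, with cells $C_i$, $i\in\bZ$, where $C_i$ is the trapezium between the transverse bars $b_i,b_{i+1}$ of alternating lengths $a$ (for $i$ even) and $b$ (for $i$ odd), and with $b_0$ taken as the base edge. The first and most framework-specific step is a structural reduction: for a base-fixed flex $p(t)$ one records the displacement of the transverse bar $b_i$ by a small state $\theta_i(t)$ (essentially its inclination), and the distance equations of the single cell $C_i$ — read off from the geometry of one trapezium, with the base-fixing removing any residual freedom — express $\theta_{i+1}$ as a real-analytic function of $\theta_i$ on the solution branch through the undeformed state, $\theta_{i+1}=\psi_i(\theta_i)$ with $\psi_i(0)=0$. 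Periodicity makes $\psi_i$ depend only on the parity of $i$, so the whole flex is governed by the return map $\psi:=\psi_1\circ\psi_0$. Establishing that a continuous flex really is controlled by this scalar cell-to-cell recursion once $b_0$ is fixed — the strip analogue of finite determination — is where the combinatorics of the particular framework has to be used.

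The analysis then rests on the local dynamics of $\psi$ at the fixed point $0$. If $|\psi_0'(0)\psi_1'(0)|\neq1$ then one of the two directions of the recursion already amplifies geometrically and we are finished at once; so the substantive case — the one mirrored by the fact (Section 2.2) that the strip's natural infinitesimal flex is bounded — is $\psi'(0)=\pm1$. When $\psi'(0)=1$, the decisive computation is that the quadratic Taylor coefficient of $\psi$ at $0$ is nonzero precisely for $a\neq b$, so that $\psi(\theta)=\theta+c(a,b)\theta^2+O(\theta^3)$ with $c(a,b)\neq0$ for $a>b>0$ (and $c(a,a)=0$, reflecting the shearing flexibility of the rectangle strip). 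When $\psi'(0)=-1$ one passes to $\psi^2$, whose cubic coefficient is again nonzero exactly for $a\neq b$. In either case $0$ is a parabolic fixed point: one-sided orbits are eventually monotone and, according to the sign of the initial state, either escape to infinity or tend to $0$; consequently the only orbit of $\psi$ bounded in both the forward and backward directions is the zero orbit.

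To conclude, suppose $p(t)$ is a nonconstant base-fixed continuous flex. For each $t$ the states $(\theta_i(t))_i$ form a two-sided orbit of the $\psi_i$, and it must be bounded — an unbounded $\theta_i$ would prevent the bars of some far cell $C_i$ from closing up, the relevant side of a triangle exceeding the sum of the other two, so the distance equations there would have no solution at all; this is the "triangle inequality at a far remove" behind the vanishing flexibility. By the parabolic dichotomy the orbit is therefore identically zero, so $\theta_i(t)\equiv0$ for every $i$; together with the preserved edge lengths and the fixed base this forces $p(t)=p$, a contradiction. Finally, the rectangle strip (the limiting case $a=b$) is flexible, carrying the explicit shearing flex in which all cells become congruent parallelograms; but for every $\epsilon>0$ the $\epsilon$-perturbation that lengthens the even transverse bars by some $\delta\in(0,\epsilon)$ is one of the rigid trapezium strips just treated. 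Hence the rectangle strip admits rigid perturbations of arbitrarily small size, and so is not stably flexible.

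I expect the main obstacle to be the second-order computation — verifying cleanly that the quadratic (respectively cubic) coefficient of the return map is nonzero exactly when $a\neq b$ — together with choosing the state variable so that the recursion genuinely reduces to a scalar map in parabolic normal form. A related nuisance is the case analysis on the sign of $\psi'(0)$ and the even/odd splitting it entails. The other point requiring care, though more routine, is the structural reduction itself: the branch-tracking along the strip and the verification that no extra degrees of freedom survive cell by cell once the base edge is fixed.
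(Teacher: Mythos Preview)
Your outline is sound and would work, but the paper takes a shorter and more elementary route to the same amplification phenomenon.

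You reduce to the return map $\psi$ across one period, observe $\psi'(0)=1$, and propose to verify that the \emph{quadratic} Taylor coefficient is nonzero for $a\neq b$, invoking parabolic fixed-point dynamics. You correctly flag this second-order computation as the main obstacle. The paper avoids it entirely. Instead of expanding at $0$, it computes the derivative of the double-trapezium transmission function $\gamma$ at \emph{every} displaced configuration: with angles $D,E,F,G$ measured relative to the trajectory tangents,
\[
\gamma'(\alpha)=\frac{\cos D}{\cos E}\cdot\frac{\cos F}{\cos G},
\]
and two elementary geometric inequalities (the middle angle $B$ exceeds both outer angles $A$ and $C$, whence $D>E$ and $F>G$) give $\gamma'(\alpha)<1$ for all $0<\alpha<\alpha_1$. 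This yields $0<\gamma(\alpha)<\alpha$ immediately, so backward iterates $\gamma^{-n}(A)$ strictly increase past the locking angle $\alpha_1$, a contradiction. The paper is therefore doing a first-order computation at all points rather than a second-order computation at one point; the cosine-ratio identity is what replaces your ``decisive computation'' of $c(a,b)\neq 0$.

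A couple of smaller points. Since the single-trapezium derivative at $0$ is $a/b$ (respectively $b/a$), one always has $\psi'(0)=+1$ here, so your $\psi'(0)=-1$ branch never arises and the associated cubic computation is unneeded. Also, your closing step (orbits that leave a neighbourhood of $0$ eventually violate a triangle inequality) matches the paper's use of the locking angle; both arguments are a little informal at exactly this point, but in the paper the explicit strict monotonicity of $\gamma$ on $(0,\alpha_1)$ makes the escape concrete. Your treatment of the stable-flexibility corollary for the rectangle strip is the same as the paper's.
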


\begin{center}
\begin{figure}[h]\label{}
\centering
\includegraphics[width=6cm]{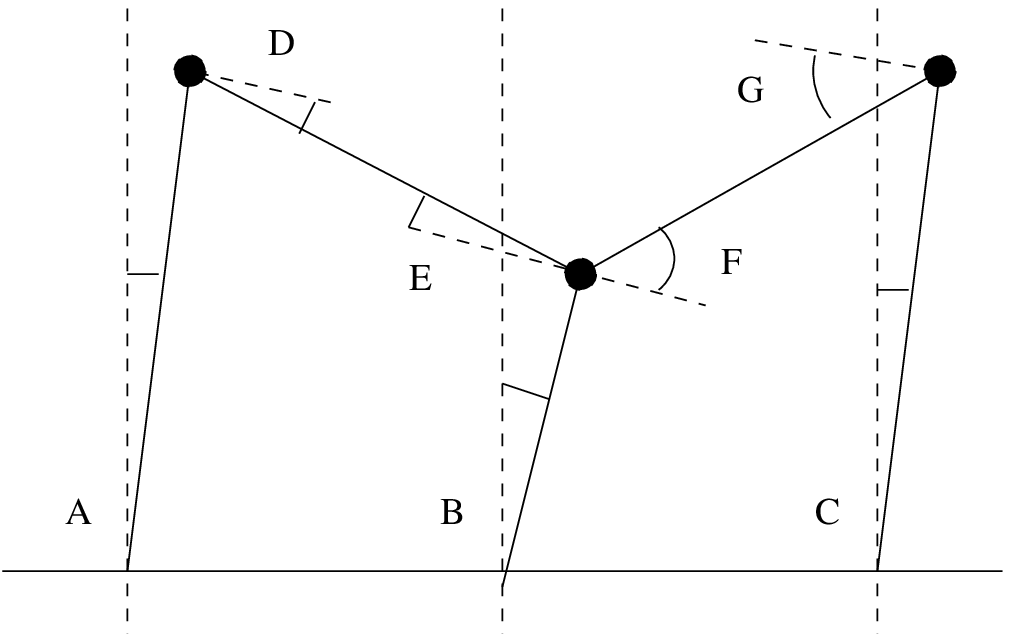}
\caption{}
\end{figure}
\end{center}

\begin{proof}
Figure 13 shows the displacement of a double trapezium to the right. Let the three vertical bar lengths be $a,b$ and $a$ units with $a>b>0$. The displaced position has angles $A,B,C$ at the base line and angles $D,E,F,G$ occurring relative to the trajectory tangents of the displaced vertices. For a subsequent incremental change $\delta A$, with resulting incremental changes $\delta B, \delta C,\delta D,\delta E,\delta F,\delta G$ we can see
from simple geometry that to first order
\[
a(\delta A \cos D) = b(\delta B\cos E).
\]
Suppose now that $A$ is regarded as a specialisation of the input angle $ \alpha$ with resulting output angle
$\beta = \beta (\alpha)$, so that at $\alpha = A$ we have $\beta(\alpha) = B$.
Then
\[
\frac{d\beta}{d\alpha}|_{\alpha =A} = \lim_{\delta A \to 0}\frac{\delta B}{\delta A} = \frac{a}{b}\frac{\cos D}{\cos E}.
\]
Similarly, with angle $C$ regarded as the  output angle $\gamma (B) $ for the angle transmission function
$\gamma = \gamma(\beta)$, we have
\[
\frac{d\gamma}{d\beta}|_{\beta =B} = \frac{b}{a}\frac{\cos F}{\cos G}
\]
and so
\[
\frac{d\gamma}{d\alpha}|_{\alpha =A}= \frac{d\gamma}{d\beta}|_{\beta =B}\frac{d\beta}{d\alpha}|_{\alpha =A} =\frac{\cos D}{\cos E}\frac{\cos F}{\cos G}.
\]
Note that since $B>A$ we have also $D>E$, and since $B>C$ we have $F>G$, from which it follows that both ratios above are less than one. Thus certainly
$0 < \gamma'(\alpha) < 1 $ for $0<\alpha <\alpha_1$ where $\alpha_1$ is the first positive angle for which $\gamma'(\alpha_1) =0.$

It follows, from the mean value theorem, that the double trapezium angle transmission function is an increasing differentiable function with
$$\gamma(0) =0, \quad 0 < \gamma(\alpha) < \alpha, \quad\mbox{for} \quad 0<\alpha < \alpha_1.$$
It follows immediately that the right-semi-infinite trapezium strip is right flexible.

We let $\lambda = \gamma(\alpha_1)$, which we refer to as the locking angle. Note the second trapezium of the double admits no increase of this angle. In view of the above
we have $\lambda < \alpha_1$.

Suppose now that $\G$ is the two-way-infinite trapezium strip, with $a \neq b$.
Let $p(t)$ be a flex and suppose that for a fixed framework edge with length $a$ the angle $A$ is greater than zero for some time $t_1 > 0$ and that $t_1$ is the first such time. Then certainly $0<A<\lambda$. Note that $A_{-n}= \gamma^{-n}(A), n=1,2, \dots  $ are the angles of the edges of length $a$, counted off to the left. In view of the function dominance $0 < \gamma(\alpha) < \alpha$
it follows that $A_{-n} >\alpha_1$ for some $n$, which is a contradiction.
\end{proof}

The argument above also shows that the semi-infinite trapezium strip framework
of Figure 7 has a continuous
flex but no two-sided flex.

\begin{rem} {\rm  It seems to be of interest to analyse strip frameworks in further detail. For example, a trapezium strip framework is not stably rigid,
despite the apparent "robustness" of the argument above.
To see this use surgery in the following way. Remove one cross bar, then push the rightmost semifinite strip to the right, by an angle perturbation $A= \epsilon >0$. Now insert a replacement bar of the required length. One can flex the resulting structure towards  the left to restore the position of the right hand strip. Indeed, this is all the flexibility
the framework has.
The possible flex of an $\epsilon$ perturbation, such as the one described, seems to be of order $\epsilon$ and
so there does seem to be "approximate rigidity".
}
\end{rem}

\begin{rem}{\rm  Consider a periodic  trapezium grid framework $\G_{trap}$
obtained by perturbing $\G_{\bZ^2}$ by adding a fixed small positive value to the $y$ coordinate of the framework points $p_{ij}$ for the odd values of $i$ and $j$.
It can be shown that this framework is rigid over any linear subframework.
This contrasts with the grid framework itself which is freely flexible over its $x$ and $y$ axes in the following specific local sense: all sufficiently small flexes of the subframework extends to a flex of the whole framework.
On the other hand note that Theorem 4.4 shows that
$\G_{trap}$ is deformable. }
\end{rem}

\subsection{Forms of flexibility}
It seems to be a fundamental and interesting issue to determine the
ways in which infinite bar-joint frameworks are rigid or continuously
flexible. In this section we
give some further definitions, we give  sufficient conditions for the existence of a proper flex and we contemplate a plausible infinite framework version of Laman's theorem.

Flexes are often infinitely differentiable or smooth in the sense of the following formal definition. This is the case for example, for  the "alternation" flexes of $\G_{sq}$ and $\G_{kag}$.

\begin{definition}
A continuous base-fixed two-sided flex $p(t): t \in [-1,1]$ of a framework
$(G,p)$ in $\bR^d$ is a
\textit{smooth flex} if each coordinate function $p_i(t)$ is infinitely differentiable.
\end{definition}

The smoothness of a flex is a local requirement whereas the following terms
impose various increasing forms of global constraint. In particular rotational flexes of infinite
frameworks with unbounded diameter are not bounded flexes, while a translational flex of
an infinite framework is a bounded flex  but is not a vanishing flex.
Adopting a term that has been used in applications \cite{goodwin2008} we refer to flexes
which are not bounded as \textit{colossal flexes}.

\begin{definition}
A continuous flex $p(t)= (p_k(t))_{k=1}^\infty , (t\in [0,1])$ of an infinite framework $(G, p)$ in $\bR^d$ is said to be

(i) a bounded flex  if for some $M>0$ and every $k$ and $t$,
 $$|p_k(t) - p_k(0)| \leq M,$$

(ii) a colossal flex  if it is not bounded,

(iii) a  vanishing flex if $p(t)$ is a bounded flex
and if the maximal displacement
\[
\|p_k - p_k(0)\|_\infty = \sup_{t\in [0,1]} |p_k(t) - p_k(0)|
\]
tends to zero as $k \to \infty$,

(iv) a square-summable flex if
\[
\sum_{k=1}^\infty \|p_k-p_k(0)\|_\infty^2 < \infty,
\]

(v) a summable flex if
\[
\sum_{k=1}^\infty \|p_k-p_k(0)\|_\infty < \infty,
\]

(vi) an internal flex if for all but finitely many $k$ the function $p_k(t)$ is constant.
\end{definition}

Also we say
that $(G,p)$ has a deformation  (resp. bounded or vanishing deformation) if it has a base-fixed flex $p(t)$ (which is bounded or vanishing).


\begin{definition}
A connected infinite locally finite proper framework in two or three
dimensions is boundedly rigid (resp. summably rigid, square-summably rigid, smoothly rigid, internally rigid) if there is no deformation, that is, no base-fixed proper continuous flex, which is bounded (resp. summable, square-summable, smooth, internal).
\end{definition}

\subsection{Sufficient conditions for flexibility}
There is a sense in which vanishing flexibility
is almost the only obstacle to the existence of a flex of a framework
all of whose finite subframeworks are flexible. More precisely, in the hypotheses of the next theorem we assume that there are two distinguished framework vertices, $p_1,
p_2,$ such that any finite subframework $(H,p)$ containing $p_1, p_2$ has a flex which
properly separates this pair in the sense of condition (ii) below.
The additional requirement needed is that there is a family of flexes of the
finite subframeworks whose restrictions to any given subframework $(H,p)$
are uniformly smooth in the sense of condition (i). Note that the constant here
depends only on $(H,p)$ and indeed the resulting flex may of necessity be
a colossal  flex.

\begin{thm} \label{t:aathm}Let $(G,p)$ be an infinite locally finite
framework in $\bR^d$
with a connected graph, let
$$(G_1,p)\subseteq (G_2, p)
\subseteq \dots ,$$ be
subframeworks, determined by finite subgraphs $G_r = (V_r, E_r)$
with union equal to  $G$ and let $v_1, v_2$ be  vertices in $G_1$. Suppose
 moreover that  for each $r=1,2,\dots ,$ there is a base-fixed smooth flex
 $p^{r}(t)= (p^r_k(t))_{k=1}^{|V{(G_r)}|}$  of $\G_r = (G_r,p)$ such that

(i) for each finite framework $\G_l$ the set $\F_l$ of restriction flexes
$$\{p^{r}(t)|\G_l : r\geq l \}$$
have uniformly bounded derivatives, that is, there are constants $M_l, l=1,2,\dots ,$ such that
\[
|\frac{d}{dt}p_k^{r}(t)| \leq M_l \quad \mbox{  for  } r\geq l, v_k \in V_l,
\]
(ii) the framework points $p_1, p_2$ are uniformly separated by each flex $p^r(t)$
in the sense that
\[
|p^{r}_1(1)- p^{r}_2(1)| - |p^{r}_1(0)-p^{r}_2(0)| \geq c
\]
for some positive constant $c$.

Then
$(G,p)$ has a deformation.
\end{thm}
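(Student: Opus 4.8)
The plan is to apply the Ascoli--Arzel\`a theorem to the family $\F_l$ of restriction flexes on each finite subframework $\G_l$, and then to extract a single flex of $(G,p)$ by a diagonal argument over the exhausting chain $\G_1\subseteq\G_2\subseteq\cdots$. First I would fix $l$ and examine the set $\F_l = \{\,p^r(t)|\G_l : r\ge l\,\}$, viewed as a subset of $C([0,1],\prod_{V_l}\bR^d)$. Hypothesis (i) gives a uniform bound $M_l$ on the derivatives $\frac{d}{dt}p_k^r(t)$ for $v_k\in V_l$, so each such restriction flex is Lipschitz with constant $M_l$; hence $\F_l$ is equicontinuous. For pointwise precompactness one needs a uniform bound on $|p_k^r(t) - p_k^r(0)|$; since $p_k^r(0) = p_k$ is fixed and $|p_k^r(t)-p_k| \le \int_0^t |\frac{d}{ds}p_k^r(s)|\,ds \le M_l$, the set $\{p_k^r(t) : r\ge l, t\in[0,1]\}$ lies in a fixed ball, so $\F_l$ is bounded in $C([0,1],\prod_{V_l}\bR^d)$. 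By Ascoli--Arzel\`a, $\F_l$ is precompact, so some subsequence of $(p^r|\G_l)_r$ converges uniformly on $[0,1]$.

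Next I would run the standard diagonalisation: using precompactness of $\F_1$, pass to a subsequence $(r^{(1)}_j)$ along which $p^{r^{(1)}_j}|\G_1$ converges uniformly; from this extract a further subsequence $(r^{(2)}_j)$ along which $p^{r}|\G_2$ converges uniformly; and so on. The diagonal sequence $r_j := r^{(j)}_j$ has the property that for every $l$, $p^{r_j}|\G_l \to q^{(l)}(t)$ uniformly on $[0,1]$ for some limit $q^{(l)}$. Since the exhausting chain has union $G$ and the restrictions are compatible, these limits patch together to a well-defined function $q(t) = (q_k(t))_{k=1}^\infty$ on $\prod_V\bR^d$, with each $q_k$ continuous (uniform limit of continuous functions). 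It remains to verify that $q(t)$ is a base-fixed proper continuous flex. Conditions (i)--(iii) of Definition 2.1 are inherited in the limit: $q(0)=p$ since each $p^{r_j}(0)=p$; each $q_k$ is continuous; the base edge (or base affine set) stays fixed because each $p^{r_j}$ is base-fixed; and every edge length is conserved, since for any fixed edge $(v_i,v_j)$ it lies in some $\G_l$ and $|q_i(t)-q_j(t)| = \lim_j |p^{r_j}_i(t)-p^{r_j}_j(t)| = |p_i-p_j|$.

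The one genuinely substantive point --- and the place I expect the argument to need care --- is verifying that $q(t)$ is nonconstant, i.e.\ condition (v) of Definition 2.1. This is exactly where hypothesis (ii) enters: for every $r$, $|p^r_1(1)-p^r_2(1)| - |p^r_1(0)-p^r_2(0)| \ge c > 0$. Since $v_1,v_2\in V_1$ and $p^{r_j}|\G_1 \to q^{(1)}$ uniformly, passing to the limit gives $|q_1(1)-q_2(1)| - |q_1(0)-q_2(0)| \ge c > 0$, so $q_1(1)$ and $q_2(1)$ cannot both equal their initial values; hence $q(t)\neq p$ for $t=1$, and $q$ is a genuine deformation. (Properness of $(G,p)$ is assumed, so the flexed framework need not be checked for the hyperplane condition --- only nonconstancy matters for the conclusion.) The only subtlety is making sure the diagonal subsequence is chosen so that convergence holds simultaneously on $\G_1$ (for condition (ii)) and on all $\G_l$ (for edge-length conservation and continuity of every $q_k$); the nested construction above handles this automatically. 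This completes the plan.
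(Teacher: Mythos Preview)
Your proposal is correct and follows essentially the same route as the paper: Ascoli--Arzel\`a applied to each $\F_l$ (equicontinuity and boundedness from the derivative bound (i)), followed by a diagonal subsequence extraction to produce a coordinatewise limit $q$, with hypothesis (ii) supplying the nonconstancy of $q$ at $t=1$. You have in fact been more explicit than the paper about why $\F_l$ is bounded and about verifying the individual conditions of Definition~2.1, but the argument is the same.
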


Note that it is essential that the separated vertices of condition (ii)
are the same for each subgraph.
To see this note that the two-way infinite trapezium strip framework considered in Figure 7 has smooth
 deformations on each of its finite strip subframeworks, each of which "separates" some two vertices (at the end of the strip) by a fixed positive distance. Nevertheless the infinite strip fails to have a deformation.

\begin{proof} For $l=1,2, \dots ,$
let $X_l$ be the space of continuous functions from $[0,1]$ to $\bR^{d|V_l|}$
and note that the family $\F_l$, by the hypotheses, is an equicontinuous family in $X_l$. Moreover with respect to the supremum norm $\F_l$ is a bounded set.
By the Ascoli-Arzela theorem (see \cite{rud} for example) $\F_l$ is  precompact  and in particular for $\F_1$
there is a subsequence $r_1, r_2, \dots $ such that the restrictions $p^{r_k}|\G_1$
converge uniformly in their (finitely many) coordinates to a flex $q^1$ of $\G_1$.
Relabel the sequence $(p^{r_k})$ as $(p^{(k,1)})$. The restrictions of these flexes
to $\G_2$ similarly have a convergent subsequence, say $(p^{(k,2)})$, and so on.
From this construction select the diagonal subsequence $(p^{(k,k)})$. This converges
coordinatewise uniformly to a coordinatewise continuous  function
\[
q:[0,1] \to \bR^d \times \bR^d \times ....
\]
Since the restriction of $q$ to every finite subframework is a flex, $q$
satisfies the requirements of a flex of $(G,p)$, except possibly the properness requirement (v) of Definition 2.1. In view of (ii) however, $q(0)\neq q(1)$ and so $q$
 is a continuous flex of $(G,p)$.
\end{proof}

\begin{rem}{\rm
Computer simulations provide evidence for the fact that small random perturbations of $\G_{\bZ^2}$ yield frameworks that are flexible. That  is it seems that $\G_{\bZ^2}$
 is stably flexible. It would be interesting if the theorem above could assist in a proof of this.}
\end{rem}

\subsection{Flex extensions and generic rigidity}
Let us recall a version of Laman's theorem.

\begin{thm}
Let $(G,p)$ be an algebraically  generic finite framework. Then $(G,p)$ is infinitesimally
rigid  if and only if the graph $G$ has a vertex induced subgraph $H$, with $V(H) = V(G)$, which is maximally
independent in the sense that $2|V(H)|=|E(H)| +3$ and $2|V(H')|\geq |E(H')| +3$ for every subgraph $H'$ of $H$.
\end{thm}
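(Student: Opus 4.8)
The plan is to pass from infinitesimal rigidity to the combinatorics of the generic rigidity matroid. Recall that for a proper planar framework the rigidity matrix $R(G,p)$ is the $|E|\times 2|V|$ matrix whose row indexed by an edge $(v_i,v_j)$ has $p_i-p_j$ in the two columns of $v_i$, $p_j-p_i$ in the two columns of $v_j$, and zeros elsewhere, and that $(G,p)$ is infinitesimally rigid exactly when $\operatorname{rank}R(G,p)=2|V|-3$ (the trivial infinitesimal motions already span a $3$-dimensional space since $p$ is not collinear). The entries are polynomials in the coordinates, so for each edge subset $E'$ the independence of the corresponding rows is a Zariski-open condition and therefore holds either at no framework or at every algebraically generic one; hence the independent edge sets form a matroid $\R(G)$ on $E$, and a generic $(G,p)$ is infinitesimally rigid iff $\R(G)$ has rank $2|V|-3$. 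So it suffices to prove the purely combinatorial statement that $E'$ is independent in $\R(G)$ if and only if $E'$ is hereditarily Laman-sparse on the vertices it meets.

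First I would dispose of the easy inclusion. If $E'$ is independent in $\R(G)$ and $V'$ is the vertex set it covers, then its rows lie inside the rigidity matrix of the subframework on $V'$, whose rank is at most $2|V'|-3$; hence $|E'|\le 2|V'|-3$, i.e.\ $2|V'|\ge |E'|+3$. This already yields one direction of the theorem: if $(G,p)$ is generic and infinitesimally rigid, choose a basis of the row space of $R(G,p)$; it is indexed by an edge set $E(H)$ with $|E(H)|=2|V|-3$, it must meet every vertex (a missed vertex would force the basis to be supported on $\le|V|-1$ vertices, giving rank $\le 2|V|-5$), and by the sparsity inequality $H$ and all its subgraphs obey the Laman count, so $H$ is a maximally independent vertex-spanning subgraph. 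Conversely, once we know that such an $H$ is independent in $\R(H)$, its $2|V|-3$ rows are linearly independent for generic $p$, so $\operatorname{rank}R(G,p)\ge 2|V|-3$, and combined with the universal bound $\operatorname{rank}\le 2|V|-3$ this gives infinitesimal rigidity.

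The substance is therefore the converse: a graph $H$ with $|E(H)|=2|V(H)|-3$ and $2|V(H')|\ge|E(H')|+3$ for every subgraph $H'$ is independent, hence a basis, in $\R(H)$. I would prove this by induction on $n=|V(H)|$ using Henneberg moves, with base case $n=2$ the single edge $K_2$. The combinatorial half is the classical dichotomy for such ``Laman graphs'' on $n\ge 3$ vertices: either there is a vertex of degree $2$ whose deletion is again a Laman graph (a $0$-reduction), or there is a vertex $v$ of degree $3$ and two of its neighbours which may be joined by a new edge after deleting $v$ so as to produce a Laman graph (a $1$-reduction). The geometric half is the lemma that a generic $0$-extension (adjoining a vertex joined to two old ones) and a generic $1$-extension (deleting an edge $xy$ and adjoining a vertex joined to $x$, $y$ and one further vertex) each carry an independent generic framework to an independent generic one: for the $0$-extension the two new rows visibly extend any basis; for the $1$-extension a short rank computation shows the three new rows minus the deleted one raise the rank by exactly two, again using that the new vertex is generic.

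The hard part will be the $1$-reduction step — showing that when no degree-$2$ vertex exists some degree-$3$ vertex admits a splittable pair of neighbours. This needs an extremal counting argument: a subgraph tight for the Laman bound among the three pairs of neighbours of $v$ obstructs at most one of the three possible re-insertions, so not all three can be blocked; and one must then check that the specific reduced graph so produced is exactly a graph to which the inductive hypothesis applies, keeping the genericity bookkeeping aligned. Everything else — the matroid setup, the sparsity bound, the $0$-extension case — is routine, and for the detailed verification of the Henneberg step I would cite \cite{lam} and \cite{gra-ser-ser}.
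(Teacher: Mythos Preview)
The paper does not supply its own proof of this statement; it is merely recalled as Laman's theorem, with the references \cite{lam} and \cite{gra-ser-ser} carrying the burden. So there is nothing to compare against directly.

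Your outline is the standard route and is essentially sound: pass to the generic rigidity matroid via the Zariski-open nature of row independence, get the sparsity inequality from the trivial rank bound $2|V'|-3$ on any subframework, and prove sufficiency by Henneberg induction. The forward direction (rigid $\Rightarrow$ spanning Laman subgraph) and the $0$-extension step are handled correctly.

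One point to tighten. Your description of the $1$-reduction obstruction is not quite right: a single tight subgraph containing all three neighbours of the degree-$3$ vertex $v$ would block \emph{all three} re-insertions at once, not just one. The actual argument assumes each of the three candidate edges is blocked by some tight subgraph and then uses the fact that the union of two tight subgraphs sharing at least two vertices is again tight (a submodularity/counting lemma) to build a subgraph that, once $v$ and its three incident edges are restored, violates the sparsity bound. You rightly flag this as the hard step and defer to the literature, but the mechanism is a union-of-tight-sets contradiction rather than a one-block-per-subgraph count.

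A side remark on the statement as printed: ``vertex induced subgraph $H$ with $V(H)=V(G)$'' would literally force $H=G$; you have correctly read it as an edge subgraph spanning all vertices, which is what Laman's theorem actually asserts.
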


 For convenience we refer to a maximally independent finite graph as a Laman graph.
 We remark that any Laman graph can be obtained from a triangle graph by a sequence of
 moves known as Henneberg moves. The first of these adds a new vertex with two connecting
 edges while the second breaks an edge into two at a new vertex  which is then connected by a new edge to another point of the graph.

 Now let $G$ be an infinite graph which contains a subgraph $H$ on all the vertices of $G$ and suppose that $H$ is $\sigma$-Laman.  In view of Laman's theorem every algebraically generic realisation of $H$ (and hence $G$)  in the plane is $\sigma$-rigid and so continuously rigid. Is the converse true ? That is, if every generic realisation of
 an infinite graph $G$ is rigid does $G$ necessarily contain a $\sigma$-Laman subgraph
 with the same vertex set.

To see this one needs to show is that if $G$ is $\sigma$-(Laman-1) and not $\sigma$-Laman, then there exists a vertex generic  realisation $(G,p)$  in the plane which has a continuous flex. That is, we want to build up a flex of the infinite structure by adding new vertices and edges, in the least handicapping way, to allow all, or most of the flex
of an initial finite subgraph
to be extended.

Alternatively, and more explicitly, suppose that one starts with
a generic connected Laman-1 framework $(G_1, p)$ with $n$ vertices
and  an infinite sequence of Henneberg move "instructions". These instructions
 yield a unique infinite graph. Is it possible to choose associated framework
 points  $p_n, p_{n+1},..$ so judiciously that some (perhaps small)
flex of $(G_1, p)$  extends fully to each successive finite extension framework (and hence the infinite framework) ?

\section{Rigidity Operators and Infinitesimal Rigidity}
In previous sections we have
considered  some variety in
the \textit{nature} of continuous flexes $p(t)$ and how they might distinguished.
A companion consideration
is the analysis of various spaces of infinitesimal flexes. This gives
insight into continuous flexes since the derivative $p'(0)$ of a differentiable flex $p(t)$ is an infinitesimal  flex.

Here we give an operator theory perspective for an infinitesimal theory of
infinite frameworks in which the rigidity matrix
$R(G,p)$ is viewed as a linear transformation or linear operator
between various spaces. The domain space contains a space of infinitesimal flexes,
which lie in the kernel of the rigidity operator, while the
range space contains a space of {self-stress vectors} namely those in the kernel of the transpose of $R(G,p)$.

\subsection{Infinitesimal rigidity and the rigidity matrix.}
Recall that for a finite framework $(G,p)$ in $\bR^d$ with $n=|V|$
an \textit{infinitesimal flex} is a vector $u = (u_1,\dots ,u_n)$
in the vector space
 $\H_v = \bR^d \oplus \dots \oplus \bR^d $ such that the orthogonality
relation
$\langle p_i-p_j,u_i-u_j\rangle =0$
holds for each edge $(v_i,v_j)$. This condition ensures that if each $p_i$
is perturbed to
$p_i(t)=p_i+tu_i$, with $t$ small, then the edge length perturbations are of second order only
as $t$ tends to zero.
That is, for all edges,
\[
|p_i(t)-p_j(t)|-|p_i-p_j| = O(t^2).
\]

If $q(t):[-1,1]\to \H_v$ is a two-sided smooth flex of the finite framework
$(G,p)$ then $q'(0)$ is an infinitesimal flex
and for a generic finite framework every infinitesimal flex arises in this way.
 See Asimow and Roth \cite{asi-rot}
for example.

Associate with an infinite framework $(G,p)$ the  product vector space
$$\H_v = \prod_V \bR^d= \bR^d \oplus \bR^d \oplus \dots  $$
consisting of all sequences $u = (u_1, u_2, \dots )$. Conceptually such a vector
corresponds to a specification of instantaneous velocities, or to a  perturbation sequence,
 applied to the framework joints.
Define an \textit{infinitesimal flex} of $(G,p)$ to be a vector $u$ in $\H_v$ for which, as above,
 $\langle p_i-p_j,u_i-u_j\rangle =0$
holds for each edge $(v_i,v_j)$, and let $\H_{fl}$ denote the linear space of all these
vectors.
In the planar case $\H_{fl}$ contains the three-dimensional
linear subspace (assuming $G$ has at least one edge) of the infinitesimal flexes that arise from the isometries of $\bR^2$. Note that a nonzero rotation infinitesimal flex $u$ is an unbounded sequence if and only if $(G,p)$ is an unbounded framework.
We denote the space of rigid body motion infinitesimal flexes as
 $\H_{rig}$.

\begin{definition} An infinite framework $(G,p)$ is
{infinitesimally rigid} if every infinitesimal
flex is a rigid body motion infinitesimal flex.
\end{definition}

If $G$ is infinite then $\H_v$ contains properly
the direct sum space $\H_v^{00}= \sum_V \oplus \bR^2$
consisting of vectors whose coordinates are finitely supported, in the
sense of being finitely nonzero.
The following definition is convenient and evocative.

\begin{definition} An infinite framework $(G,p)$ is
{internally infinitesimally rigid} if every finitely supported infinitesimal flex
is the zero flex.
\end{definition}

\begin{center}
\begin{figure}[h]
\centering
\includegraphics[width=4cm]{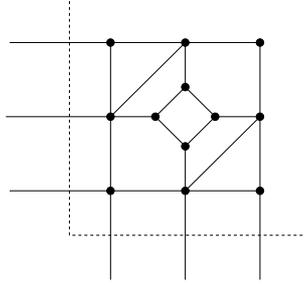}
\caption{Unit cell for an internally infinitesimally flexible periodic framework.}
\end{figure}
\end{center}

We now give the usual direct definition of the rigidity matrix $R(G,p)$ of a framework $(G,p)$,
allowing $G$ to be infinite. This matrix could also be introduced via
the Jacobian of the equation system that defines $V(G,p)$ since $2R(G,p)$
is the Jacobian  evaluated at $p$.

Write $p_i = (x_i, y_i)$, $u_i=(u^x_i, u^y_i), i=1,2, \dots $, and  denote the coordinate difference $x_i -x_j$ by $x_{ij}$. The rigidity matrix
is an infinite matrix $R(G,p)$ with rows indexed by edges $e_1, e_2, \dots$ and columns labeled by vertices but with multiplicity two, namely $v_1^x, v_1^y,v_2^x, v_2^y, \dots$. Note that any matrix of this shape,  with finitely many nonzero entries in each row, provides a linear transformation from $\H_v$ to $\H_e = \prod_E \bR$.

\begin{definition}
The rigidity matrix of the infinite framework $(G,p)$, with $p = (p_i) = (x_i,y_i)$,
is the matrix $R(G,p)$ with
entries $x_{ij}, x_{ji}, y_{ij}, y_{ji}$ occurring in the row
with label $e=(v_i, v_j)$ with the respective column labels
$v^x_i, v_j^x, v^y_i,v^y_j$, and with zero entries elsewhere.
\end{definition}

In particular a vector $u$ in $\H_v$ is an infinitesimal flex if and only if  $R(G,p)u=0$.

\begin{definition}
 (i) A  stress (or, more properly, a self-stress) of a finite or infinite framework $(G,p)$ is a vector $w = (w_e)$
in $ \H_e = \prod_E \bR$ such that  $R(G,p)^tw=0.$

(ii) A finite or infinite framework $(G,p)$ is isostatic, or absolutely isostatic,
if it is infinitesimally rigid and has no nonzero self-stresses.
\end{definition}

 Since it is understood here that $G$ is a locally finite graph a rigidity matrix hase finitely many entries in each column and so its transpose  corresponds
to a linear transformations from $\H_e$ to $\H_v$.

In the finite case a self-stress represents a finite linear dependence between
the rows of the rigidity matrix, which one might abbreviate, with language
abuse, by saying that the
corresponding edges of the framework are linearly
dependent.
A self-stress vector  $w= (w_e)_{e\in E}$ can be simply related to a vector $b=(b_e)$
conceived of as a sequence $b=(b_e)$ of bar tension forces with a resolution,
or balance, at each node. Indeed, for such a force vector
$b$  the vector $w$ for which $w_e=|p_i-p_j|^{-1}b_e$ ($e=(v_i,v_j)$) is a stress vector.
Thus there is a simple linear relationship between the space of internal stresses and the
space of resolving bar tensions.
We shall not consider here the more
general stress vectors, important in engineering applications, that arise from an external loading vectors.

Let $\H_e^{00}$ be the space of finitely supported vectors in $\H_e$. We say that
an infinite framework
$(G,p)$ is \textit{finitely isostatic} if it is internally infinitesimally  rigid and if
the finite support stress space $\H_{str}^{00}:=\H_{str}\cap \H_{e}^{00}
$ is equal to $\{0\}.$
It is straightforward to see that the grid frameworks $\G_{\bZ^d}$, in their ambient spaces, are finitely isostatic, as is the kagome framework.

Between the extremes of infinitesimal rigidity and internal rigidity there are
other natural forms of rigidity such as those given in the following definition.
Write $\ell^\infty $, $\ell^2$ and $c_0$  to indicate the usual Banach
sequence spaces for countable coordinates, and  write
$\H_e^\infty , \H_v^\infty, \dots , \H_v^0$
for the corresponding subspaces of $\H_e$ and $\H_v$.

\begin{definition}
An infinite framework $(G,p)$ is

(i) square-summably infinitesimally rigid (or infinitesimally $\ell^2$-rigid) if
$$\H _v^2 \cap \ker R(G,p) = \{0\},$$

(ii)  boundedly infinitesimally rigid (or infinitesimally $\ell^\infty $-rigid) if $$\H_v^\infty \cap \ker R(G,p) = \H_v^\infty \cap\H_{rig},$$

(iii)  vanishingly infinitesimally rigid (or infinitesially $c_0$-rigid) if
 $$\H_v^0 \cap \ker R(G,p) = \{0\},$$

(iv)   square-summably isostatic (or  $\ell^2$-isostatic) if it is infinitesimally $\ell^2$-rigid and $$\H_{str}\cap \H_e^2 =  \{0\},$$

(v)   boundedly isostatic if it is boundedly infinitesimally   rigid and $\H_{str}\cap \H_e^\infty =  \{0\}$,

(vi)   vanishingly isostatic if it is vanishingly infinitesimally rigid and $\H_{str}\cap \H_e^0 =  \{0\}$.
\end{definition}

There is companion terminology  for flexes and stresses. Thus
we refer to vectors in $\H_v^2 \cap \ker R(G,p)$ as
square summable infinitesimal flexes and so on.

\begin{example}{\rm
Let us use the shorthand $(\bN,p)$ to denote  a semi-infinite  framework in $\bR^2$ whose abstract graph is a tree with a single branch, with
edges $(v_1,v_2), (v_2,v_3), \dots $ and    where $p=(p_i), p_i=(x_i,y_i), i= 1,2,\dots$ .
Then,  writing $x_{ij}$ and $y_{ij}$ for the differences $x_i-x_j$ and $y_i=y_j$, as before, the rigidity matrix with respect to the natural ordered bases
takes the form
 \[
R(\bN,p) = \begin{bmatrix}
x_{12}&y_{12}&x_{21}&y_{21}&  0     &\dots       &   &\\
0     &0     &x_{23}&y_{23}& x_{32}& y_{32}& 0 & \dots\\
0 &0     &0    &0      &  *   &  *    &  * & \dots \\
\vdots &&&&&&&
\end{bmatrix}.
\]
With respect to the coordinate decomposition $\H_v = \H_x \oplus \H_y$
we have
\[
R(\bN,p) = \begin{bmatrix}
R_x &R_y\end{bmatrix} = \begin{bmatrix}
D_x &D_y\end{bmatrix}\begin{bmatrix}T& \\ &T\end{bmatrix}
\]
where $R_x = D_xT, R_y = D_yT$,  where
 $D_x$ and $D_y$ are the diagonal matrices
 \[
 D_x = \begin{bmatrix}
x_{12}&0&0& & \dots\\
0&x_{23}&0& &\dots\\
0&0&x_{34}&0&\\
\vdots &&&\ddots&
\end{bmatrix} \quad D_y = \begin{bmatrix}
y_{12}&0&0& & \dots\\
0&y_{23}&0& &\dots\\
0&0&y_{34}&0&\\
\vdots &&&\ddots&
\end{bmatrix}
 \]
and
$$T=\begin{bmatrix}
1&-1&0& & \dots\\
0&1& -1& &\dots\\
0&0&1&-1&\\
\vdots &&&\ddots&
\end{bmatrix}.
$$
If we now identify the domain and range spaces in the natural way for these coordinates
then
we have $T = I-U^t$ where $U^t$ is the transpose of the forward unilateral shift operator on the linear space of real sequences.

The analogous framework  $(\bZ,p)$ has a  similar matrix structure in all respects except that
in place of the Toeplitz matrix $T$ one has the corresponding
two-way infinite Laurent matrix
$I - W^{-1}$ where $W$ is the forward bilateral shift.
In both cases,
the two-dimensional subspace spanned by the
translation flexes is evident, being spanned by the constant vectors in $\H_x$
and $\H_y$.
Evidently there are infinitely many finitely supported flexes
and in fact it is possible to identify $\ker R(G,p)$ as a direct product space.

One can use operator formalism
to examine the space of stresses.
In the case
of the simple framework $(\bZ,p)$
note that $W^t=W^{-1}$ and
$$ \ker (I-W) = \ker (I-W^{-1}) =  \bR e$$ where
$e$ is the vector with every entry equal to $1$. Since
\[
R(\bZ,p)^t = \begin{bmatrix}I-W& \\ &I-W\end{bmatrix}\begin{bmatrix}D_x \\D_y\end{bmatrix}
\]
it follows that a vector $w$ is a stress vector if and only if $D_xw \in \bR e$ and $D_yw \in \bR e$.
Thus for some constants $\alpha, \beta$ we have
$
x_{i,i+1}w_i = \alpha, y_{i,i+1}w_i = \beta $, and so
for all $i$
\[
\frac{y_i-y_{i+1}}{x_i-x_{i+1}}=\frac{\beta}{\alpha}.
\]
This colinearity condition shows that the space of stresses is trivial  unless the framework points $p_i$, $i \in \bZ$ are colinear in which case $\H_{str}$ is one dimensional. This includes the colinear cases in which $p$ is a bounded sequence and the framework lies in a finite line segment in $\bR^2$.
}
\end{example}

\begin{example}
{\rm With similar notational economy write
 $(\bZ^r,p)$, (resp. $(\bN^r,p)$) for frameworks associated
with the  \textit{grid graph}
 with vertex set labeled by $r$-tuples of integers (resp. positive integers)
 $n=(n_1,\dots ,n_r)$  where the edges correspond to vertex pairs
 $(n, n \pm e_j)$, where $e_1, \dots , e_r$ are the usual basis elements.
 The ambient space for the framework is either understood or revealed by the entries of the vector $p$.

Again one can use operator formalism to analyse the space of stresses
as a vector subspace of $\H$.
In the special case of the regular grid framework $\G_{\bZ^2}$ in $\bR^2$ one can  see that the vector subspace $\H_{str}$  is a direct
product vector space (like $\H$ itself) whose product basis is
indexed by (two-way infinite)
linear subframeworks parallel to the coordinate axes.
This is also true for a general \textit{orthogonal grid framework} such as the bounded
grid framework determined by the framework points $(\pm (1-(1/2)^i), \pm (1-(1/2)^j)).$}
\end{example}

Note that we have defined an infinitesimal flex in a local way, being
the verbatim counterpart
of the usage for finite frameworks. In particular the notion takes no account of the possibility of (second order) amplification or vanishing flexibility.

{\rm
Our examples above indicate  the importance of
shift operators and in the next section we see that the bilateral shift operators, in their Fourier transform realisation as multiplication operators, play a central role in the discussion of periodic frameworks.

Let us note here that approximate infinitesimal flexes are natural for infinite frameworks and  the operator theoretic perspective.

\begin{definition}
An approximate square-summable flex of an infinite framework $(G,p)$
is a sequence of finitely supported unit vectors $u_1, u_2, \dots $ in $\H_v^2$ (or $\K_v^2$)
such that $\|R(G,p)u_n\|_2 \to 0$ as $n\to \infty$.
\end{definition}

{ Let $(G,p)$ be a distance regular framework. Then it is straightforward to show that
the rigidity matrix determines a bounded Hilbert space operator $R$. It is the metrical and geometric properties
of the action of $R$ and its transpose that
have relevance to rigidity theory rather than the spectral theory of $R$.
However we do have the  almost vacuous statement
that the existence of approximate (square-summable) flexes corresponds to the point $0$ belonging to the approximate point spectrum of $R$. For if $0$ lies in the approximate point spectrum then (by definition) $Rv_n$ is a null sequence for some sequence of unit vectors $v_n$, and approximation of these unit vectors by vectors with finite support yields, after normalisations, an approximate square-summable flex sequence $(u_n)$.
}

It is implicit in the matricial function association below that the rigid unit modes of translationally periodic frameworks are tied to
the existence of approximate flexes.
}

\section{Crystal Frameworks and Flexibility}
In previous sections we have constructed  frameworks  to illustrate
various definitions and properties.
It is perhaps of wider interest to understand, on the other hand, how extant infinite frameworks, such as those suggested by crystals or repetitive
structures, may be flexible.
Accordingly we now define crystal frameworks
and investigate various forms of flexibility and rigidity.

\subsection{Periodic and crystal frameworks.}
 We have already observed some properties of the basic examples of the \textit{grid framework} $\G_{\bZ^2}$, the \textit{squares framework} $\G_{sq}$ and the kagome framework, $\G_{kag}$.
In $\bR^3$ we also have analogues, such as the cube framework
$\G_{cube}$,  the octagon framework $\G_{oct}$ and the kagome net framework $\G_{knet}$, which consist, respectively, of vertex-joined cubes, octahedra and tetrahedra, with no shared edges or faces, each in a natural periodic arrangement.
The frameworks $\G_{kag}$,
$\G_{cube}, \G_{oct}$ and $\G_{knet}$ are polytope body-pin frameworks but we  consider  the polytope rigid units as bar-joint subframeworks
formed by adding some, or perhaps all, internal edges. As such these frameworks  are examples of crystal frameworks in the  sense
of the formal definition below. We first comment on a wider notion of periodicity.

 \begin{definition} An {affinely periodic framework} in $\bR^d$ is a framework $\G=(G,p)$ for which there exists a non-trivial discrete group  of affine transformations $T_g, g \in \D$, where each $T_g$ acts on framework points and framework edges.
 \end{definition}


For example, the  two-way infinite dyadic cobweb framework of Figure 15
is affinely periodic for the dilation doubling map and the four-fold dihedral group $D_4$.

\begin{center}
\begin{figure}[h]\label{f:kagstripflex}
\centering
\includegraphics[width=5cm]{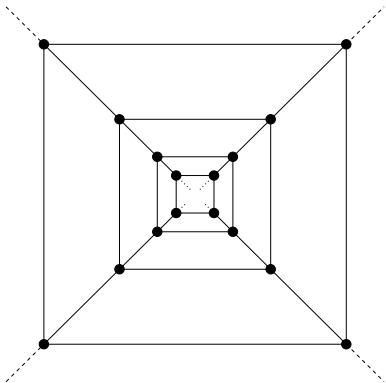}
\caption{An affinely periodic cobweb framework.}
\end{figure}
\end{center}

For another example we may take the infinite $\bZ$-periodic framework
in three dimensions for which
 Figure 16 forms a perspective view down a central axis, with framework vertices
 $(\pm 1, \pm 1, m), m\in \bZ$.
Here the affine group is an isometry group isomorphic, as a group, to $\bZ \times C_2 \times D_4$.

To illustrate the following definition observe in Figure 16 a template of six edges
and three vertices which generates the kagome framework by the translations
associated with the parallelogram unit cell. Borrowing crystallographic terminology
we refer to such a template as a {\it motif}
for the framework and the chosen translation group. The following formal definition gives a convenient
way of specifying abstract crystal frameworks.

\begin{center}
\begin{figure}[h]
\centering
\includegraphics[width=9cm]{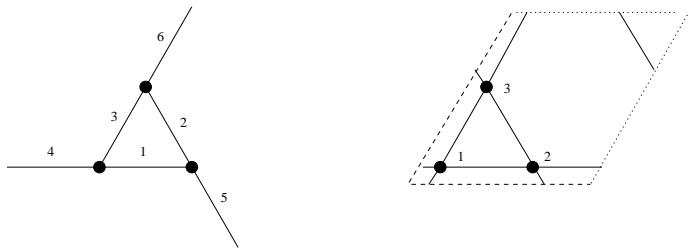}
\caption{A motif and unit cell for the kagome framework.}
\end{figure}
\end{center}

\begin{definition}
A {crystal framework} $\C = (G,p)$ in $\bR^d$ is a connected bar-joint framework for which there is a discrete
group  of translation isometries $\T = \{T_g: g \in \D\}$, a finite connected set $F_e$ of frameworks edges and a finite set $F_v$ of framework vertices (being a subset of the vertex set of  $F_e$) such that

(i) the unions
\[
 \cup_{g\in \D} T_g(F_e), \quad \quad  \cup_{g\in \D} T_g(F_v)
\]
coincide with the sets of framework edges and vertices, and

(ii) these unions are unions of  disjoint sets.
\end{definition}

 We denote a crystal framework $\C$ by the triple $(F_v,F_e, \T)$ or by
 the triple $(F_v,F_e, \bZ^d)$
 in the case of integer translations.
An associated \textit{unit cell}
for $\C$ may be defined as a
set which
contains $F_v$ and no other framework
points and for which the  translates under the translation isometries are disjoint and partition the ambient space.
For example in the case of $\G_{\bZ^2}$ we may take the semiopen set  $[0,1)^2$
or the set $[0,1)\times [1/2,5/2)$ as unit cells. Such parallelepiped unit cells are useful for us for  torus models for crystal frameworks.
Voronoi cells (Brillouin zones) also play a unit cell role
in applications but we shall not need such geometric detail here.

In many applied settings the appropriate framework models
have "short"  edges,
spanning no more than two adjacent unit cells. Here we allow general edges which may span a chain of adjacent cells.

Recall from elementary crystallography that, modulo orthogonal transformations,
there are 14 different forms (or symmetry types) in which a countable set of isolated points can be arranged with translational symmetry throughout three-dimensional space. These arrangements are called the Bravais lattices and the translation group $\T$ above corresponds to such a lattice.
Thus each point of the framework lies in the Bravais lattice generated by the orbit of its unique corresponding motif vertex
under the translational group.

\subsection{Deformability and flow flexibility.}
Recall that a general (countable, locally finite, connected) framework $\G$
is \textit{rigid} if there is no base-fixed continuous
flex and is \textit{boundedly rigid}, or boundedly nondeformable, if there is no
bounded base-fixed continuous flex $p(t)$. Recall, from Section 2, that bounded flexes are those for which there is an absolute constant $M$ such that
for every vertex $v$ the time separation $|p_v(t) - p_v(0)|$ is bounded by $M$
for all $t$ and all $v$.

We first describe a context for the standard "alternation" flexes of $\G_{sq}$ and $\G_{kag}$
and certain  periodic flexes of $\G_{kag}$ with reduced symmetry. For these nonbounded flexes
translational periodicity is maintained but relative to an affine flow of  the ambient space. By an \textit{affine flow} we mean simply a path $t\to A_t$ of affine transformations of $\bR^d$ which is pointwise continuous. The simplest such flow in
two dimensions is
a contracting flow such as $A_t(x,y) = ((1-tc)x,(1-tc))y$,  $0<c<1$.  The alternating flexes of $\G_{sq}$  and $\G_{kag}$ are associated with such a flow.

\begin{definition}
Let $d=2,3$, let $t \to A_t$ be a flow of $\bR^d$ and let $\C = (G,p)$ be a crystal framework
for the translation group $\T = \{T_g: g \in \D\}$. A flow-periodic flex of $\C$, relative
to the flow and the translation group $\T$, is a continuous flex $p(t)$ such that for each $t$ the framework $(G,p(t))$ is
$\T^t$- periodic, where $\T^t = \{A_tT_gA_t^{-1}: g \in \D\}$.
\end{definition}

Let us say simply that $\C$ is {\it affinely periodically deformable} if there exists a non trivial flow-periodic flex.

A flow-periodic flex $p(t)$ for $\C$ can be defined for a given flow if and only if
for each $t$ one can continuously solve the distance constraint equations for the vertex
positions of the motif, with the periodicity constraint, in the $A_t$-deformed unit cell. Equivalently, the motif and unit cell define a finite framework on a  torus with the noninterior edges of the motif providing reentrant ("locally geodesic") edges on the torus. For example consider the torus framework in Figure 17.
\begin{center}
\begin{figure}[h]
\centering
\includegraphics[width=3.5cm]{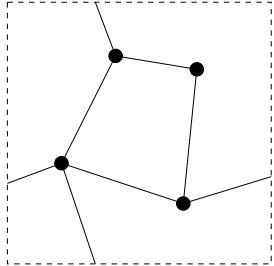}
\caption{A torus framework.}
\end{figure}
\end{center}
A horizontal affine contraction
$H_t : (x,y)\to ((1-t)x,y)$ leads to a continuous flex of the torus framework and hence to a colossal flex of
the associate crystal framework. The same is true for the vertical affine contraction $V_t$ and for the skew affine transformation
$$S_t : (x,y) \to (x+(\sin t)y,y+ (1-\cos t)x).$$ Note that this particular transformation
preserves both the cyclic width and cyclic height of the torus.

For an illustration of the method we note the following affine deformation result for what might be termed
periodic cell-generic grid frameworks in $\bR^d$.
For more general results of this nature see also Borcea and Streinu \cite{bor-str}.


\begin{thm}
Let $\C= (\bZ^d, p)$ be a grid framework in $\bR^d$
which is $(n_1,\dots , n_d)$-periodic and which is a $1/3$-perturbation of $\G_{\bZ^d}$
in the sense that
$$|p(k_1,\dots ,k_d) - (k_1,\dots ,k_d)|<1/3$$
for $0\leq k_i \leq n_i-1, 1\leq i\leq d$.
Then $\C$ is affinely periodically deformable.
\end{thm}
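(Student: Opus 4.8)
The plan is to work in the flat-torus model, produce a non-trivial infinitesimal flow-periodic flex by a soft dimension count, and then integrate it, the $1/3$-hypothesis being used only to control the configuration variety of the torus framework. Put $\Lambda=n_1\bZ\times\dots\times n_d\bZ$, $N=n_1\cdots n_d$, and regard $\C$ as a finite framework on $\bR^d/\Lambda$ with $N$ motif vertices and $dN$ short motif edges. By the principle noted just before the statement, a flow-periodic flex of $\C$ is the same thing as a non-constant continuous path through $(p,w^{0})$, where $w^{0}=(n_1e_1,\dots,n_de_d)$, in the real algebraic set
\[
\Sigma=\bigl\{(q,w)\in(\bR^{d})^{N}\times(\bR^{d})^{d}:\ |q(k+e_j)-q(k)|^{2}=|p(k+e_j)-p(k)|^{2}\ \text{for every edge}\bigr\},
\]
the motif positions $q$ being tied by $q(k+n_je_j)=q(k)+w_j$. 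Given such a path one recovers the linear part of $A_t$ from the rule $n_je_j\mapsto w_j(t)$, and the flex is non-trivial exactly when the path leaves the orbit $\mathcal{O}$ of $\mathrm{Isom}(\bR^{d})$ acting diagonally on $(q,w)$; since $\dim\mathcal{O}=d+\binom d2$, it suffices to exhibit a path in $\Sigma$ through $(p,w^{0})$ with $\dim_{(p,w^{0})}\Sigma>d+\binom d2$.

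The defining map $E$ of $\Sigma$ goes from a space of dimension $dN+d^{2}$ to $\bR^{dN}$, so its Jacobian $R_{\mathrm{per}}:=\tfrac12\,dE_{(p,w^{0})}$, the \emph{periodic rigidity matrix} of $\C$, has $\dim\ker R_{\mathrm{per}}\ge d^{2}>d+\binom d2$ for $d\ge2$; in particular $\Sigma$ carries a non-trivial infinitesimal flow-periodic flex, and this uses nothing about $p$. To promote this to the statement $\dim_{(p,w^{0})}\Sigma\ge d^{2}$ I would show that $R_{\mathrm{per}}$ has locally constant rank along $\Sigma$ near $(p,w^{0})$. Its corank there is the dimension of the space of \emph{periodic self-stresses}: vectors $(c_j(k))$ satisfying $\sum_{j}\bigl(c_j(k)\vec e_j(k)-c_j(k-e_j)\vec e_j(k-e_j)\bigr)=0$ at each motif vertex, where $\vec e_j(k):=q(k+e_j)-q(k)$, together with $\sum_{k:\,k_j=n_j-1}c_j(k)\vec e_j(k)=0$ for each $j$.

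Here the $1/3$-hypothesis enters. From $|p(k)-k|<1/3$ one gets $|\vec e_j(k)-e_j|<2/3$, and the elementary fact that $\sum_{m\ne j}a_m^{2}+(a_j-1)^{2}<4/9$ is incompatible with $\sum_{m\ne j}a_m^{2}\ge a_j^{2}$ --- the latter would force $2a_j^{2}-2a_j+\tfrac59<0$, a quadratic of negative discriminant --- shows that every edge vector $\vec e_j(k)$ lies strictly inside the $45^{\circ}$ cone about the axis $\bR e_j$. Feeding this cone separation into the self-stress equations, one finds that the only periodic self-stresses are the ``combinatorial'' ones forced by pairs of edges carrying identical edge vectors for structural reasons (for instance the loops present when some $n_j=1$), and these persist with the same dimension at every point of $\Sigma$. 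Hence $R_{\mathrm{per}}$ has constant rank near $(p,w^{0})$ on $\Sigma$, so $\Sigma$ is a manifold there of dimension $\ge d^{2}$; the non-trivial infinitesimal flex integrates to a smooth curve $t\mapsto(q(t),w(t))$ in $\Sigma$ leaving $\mathcal{O}$, and setting $p(t):=q(t)$, letting $A_t$ be the linear map $n_je_j\mapsto w_j(t)$, and anchoring one motif vertex yields the required non-trivial flow-periodic flex. Thus $\C$ is affinely periodically deformable.

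The main obstacle is precisely the constant-rank (periodic self-stress) step, and in particular making it work for \emph{every} $1/3$-perturbation rather than just a generic one: for special perturbations $\Sigma$ could a priori be singular at $(p,w^{0})$, and one must use the $45^{\circ}$-cone structure to rule this out. One cannot shortcut this by openness of maximal rank, because $\G_{\bZ^{d}}$ itself is very far from maximal rank on the torus --- its rows of mutually parallel edges carry a large periodic self-stress space --- so the cone estimate, which a $1/3$-perturbation is guaranteed to satisfy and $\G_{\bZ^{d}}$ is not, is doing essential work. A more hands-on substitute for the whole integration step is to anchor at the ``all cells congruent parallelepipeds'' affine flex $s\mapsto L_s$ of $\G_{\bZ^{d}}$ (the path in $\mathrm{GL}_d(\bR)$ with $L_0=I$ and $|L_se_j|\equiv1$), take $q_s^{0}(k):=L_s(p(k))$ as an initial guess whose edge-length defects are $O(s)$, and Newton-correct it to an exact point of $\Sigma$; the convergence of this correction again rests on the same cone estimate controlling the range of the rigidity operator of $\C$.
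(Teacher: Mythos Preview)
Your approach is genuinely different from the paper's, and the gap you yourself flag is real and is precisely what the paper's argument is designed to avoid.

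The paper does not attempt to control the rank of the periodic rigidity matrix or to analyse the self-stress space at all. Instead it gives a direct construction. In the $d=2$, $(n,m)$-periodic case it deletes one horizontal and one vertical reentrant edge from the torus motif, obtaining a subframework $\C'$. For $\C'$ one checks (by an induction on $(n,m)$, starting from the $(2,2)$ case) that the three one-parameter affine flows $H_t$ (horizontal contraction), $V_t$ (vertical contraction), and $S_t$ (skew) each yield flow-periodic deformations. One then composes them as $A_t=V_{\beta(t)}H_{\alpha(t)}S_t$ and uses the two free functions $\alpha,\beta$ to force the two deleted edge-lengths to remain constant; the key observation is that $V_{\beta(t)}$ does not disturb the cyclic width, so the two adjustments decouple. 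This produces a flow-periodic flex of $\C$ itself, with the skew parameter $t$ as the genuine degree of freedom. The $1/3$ hypothesis is used only implicitly, to ensure the torus framework is nondegenerate and the flows exist on a nontrivial interval.

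Your route via the dimension count $\dim\ker R_{\mathrm{per}}\ge d^{2}>d+\binom d2$ is sound as an infinitesimal statement, and the $45^{\circ}$-cone calculation is correct. But the passage from infinitesimal to finite hinges on your constant-rank claim, and the cone separation does not by itself force the affine-periodic self-stress space to be constant-dimensional along $\Sigma$. Even in the $2\times 2$ planar case one can write down non-generic $1/3$-perturbations for which pairs of rows of edges conspire to produce self-stresses that satisfy your boundary conditions $\sum c_j(k)\vec e_j(k)=0$, and it is not clear that these vary continuously in dimension. The paper's edge-deletion trick sidesteps this entirely: it trades the implicit-function problem on the full system for an explicit two-parameter adjustment on a system that is manifestly flexible. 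Your Newton-correction alternative runs into the same wall, since convergence of the correction needs exactly the surjectivity (absence of self-stress) you have not established.
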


\begin{proof}
We sketch the proof in case $d=2$ with $(n_1,n_2) = (n,m)$. For convenience re-scale the framework so that the large cell $[0,n-1)\times [0,m-1)$  of $\C$ becomes
to the usual unit cell.
Let  $(F_v, F_e, \bZ^2)$ be the motif representation
of $(\bZ^2, p)$.
Suppose first that $n=m=2$ and the motif has four noninterior edges
(corresponding to the four reentrant edges on the associated torus).
Let $F_e'$ be $F_e$ with one vertical and one horizontal reentrant edge removed
and let $\C'$ be the associated framework. (See Figure 17.) For $t$ taking
positive  values in some finite interval  there are affine deformations
of $\C'$ associated with each of the flows $H_t, V_t, S_t$. Moreover there is an
affine deformation associated with any composition  $A_t= V_{\beta(t)}H_{\alpha(t)}S_t$, where the functions $\alpha$ and $\beta$ are any continuous functions with $\alpha(0) = \beta(0) =0$, where  $t$ takes values in some finite interval. Now note that we may chose $\alpha(t)$ so that the separation distance corresponding to the omitted horizontal edge is constant, and we may choose $\beta(t)$ similarly for the omitted vertical edge. Note also  the essential fact that   $V_{\beta(t)}$ does not change the cyclic width.
In this way an affine deformation for $\C$ is determined. (In the case of generic
points, is determined uniquely on some finite interval).

The same principle operates for general $(n,m)$. The  subframework $\C'$, with two deleted noninterior edges (one for each coordinate direction)   once again has a flow periodic deformation associated with a composition  $A_t= V_{\beta(t)}H_{\alpha(t)}S_t$, for $t$ in a sufficiently small interval. This follows from a simple induction argument.
Once again the functions $\alpha$ and $\beta$ are chosen to provide a deformation of $\C'$ in which the separations for the omitted edges is constant.
In this way a flow periodic deformation of $\C$ itself is determined.
\end{proof}

The deformations obtained in the last proposition  are colossal deformations
and this seems to be a necessary condition if some form of periodicity
is to be maintained.
It would be interesting to determine when such frameworks possess bounded
"unstructured"  deformations.

One may also consider affine deformations relative to a subgroup of $\D$
associated  with a  supercell. By a \textit{supercell} we mean a
finite union (or possibly an infinite union) of adjacent cells which tile the ambient space by translations from the subgroup.
In this case the deformations
maintain  only a longer period of translational symmetry and for an infinite linear supercell  one may even forgo translational symmetry
in one direction.

To illustrate this let us note a class of
interesting deformations of the kagome framework $(G,p)$ which have this form.

Start with an alternation flex $p(t)$ which
leaves fixed a particular vertex $p_*$ and leaves invariant a line of hexagon diameters.
Note that this flex (which is not base-fixed) has bilateral symmetry with respect to
the line (viewed as a mirror line) but that the inversion symmetry of $(G,p)$  about the fixed vertex is broken for $t>0$.
Perform identical surgeries on the frameworks $(G,p(t))$ as follows:

(i) cut the frameworks  along the fixed  line,

(ii) effect a reflection of one of the resulting half-planes frameworks in
the orthogonal line through $p_*$ and
rejoin the half-planes to create a new flex of $(G,p(0))$.

We might view this flex as one with a symmetry transition line. The hexagons in this line maintain inversion symmetry  while elsewhere the hexagons maintain bilateral symmetry.

One may perform such surgery  on several
parallel surgery lines simultaneously. Performing surgery on countably many
lines leads to the following theorem.
 This shows, roughly speaking, that  the kagome framework sits in its configuration space as an infinitely singular point in the sense that it is the starting point for uncountably many distinct flexes. (Here we use the term distinct in the sense of
Definition 2.7.)

\begin{thm}
There are uncountably many distinct flow periodic flexes of the kagome framework.
\end{thm}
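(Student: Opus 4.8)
The plan is to exhibit uncountably many distinct flow-periodic flexes of $\G_{kag}$ by combining the surgery construction described just above the statement with a parametrisation over an uncountable index set, and then to verify that distinct parameters give flexes that are distinct in the sense of Definition 2.7 (i.e.\ one is not obtained from the other by an ambient isometry together with a reparametrisation of time).

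First I would set up the bookkeeping for the parallel surgery lines. Fix a line $L_0$ of hexagon diameters through the distinguished vertex $p_*$ and let $L_n$, $n \in \bZ$, be the family of parallel lines of hexagon diameters, so that each $L_n$ is a legitimate surgery line for the alternation flex $p(t)$ (the alternation flex leaves each such line invariant up to the affine contraction flow, and the hexagons straddling it carry the requisite bilateral symmetry). Given any subset $S \subseteq \bZ$, perform the cut-reflect-rejoin surgery of steps (i)--(ii) simultaneously on every line $L_n$ with $n \in S$. Because the surgeries are performed on disjoint strips of hexagons and each is locally a reflection that preserves all bar lengths and preserves the alternation flex data on the cut line, the result is a well-defined continuous flex $p^{S}(t)$ of $\G_{kag} = (G,p(0))$; it is flow-periodic relative to the translation subgroup generated by the translations that preserve the pattern $S$ (for $S$ periodic this is a genuine supercell subgroup, and for general $S$ one simply retains whatever translational symmetry survives, which is exactly the ``forgo translational symmetry in one direction'' remark preceding the theorem). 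Smoothness of $p^S(t)$ follows from smoothness of the alternation flex, since surgery only permutes and reflects coordinate blocks.

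Next I would show that the map $S \mapsto p^{S}$ is, after passing to a suitable uncountable subfamily, injective modulo the equivalence of Definition 2.7. The key invariant is the location of the ``symmetry transition lines'': for $t>0$ the flexed framework $(G,p^{S}(t))$ has, along each $L_n$ with $n\in S$, a row of hexagons with inversion symmetry about their centres, while the generic hexagons have only bilateral symmetry about $L_n$; and along lines $L_n$ with $n \notin S$ no such transition occurs. Thus the set $S$ is recoverable from the geometry of $p^{S}(t)$ for any single $t>0$ up to the finitely many ambiguities introduced by the ambient symmetries of the kagome net (translations permute the $L_n$ by a shift, and there are finitely many point-group symmetries). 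Concretely, two subsets $S, S'$ give rise to isometry-plus-time-reparametrisation-equivalent flexes only if $S'$ is a translate $S + k$ of $S$ for some $k \in \bZ$ (or its image under one of the finitely many remaining rigid motions). Since the orbit of any fixed $S$ under this countable group of transformations is countable, and the power set $2^{\bZ}$ is uncountable, there remain uncountably many equivalence classes; equivalently, one may simply restrict to an uncountable set of subsets $S$ no two of which differ by a translation — for instance index by $x \in (0,1)$ via the binary-type expansion $S_x = \{\,n \ge 1 : \text{$n$-th binary digit of $x$ is }1\,\}$, noting that distinct $x$ in a full-measure subset yield non-translation-equivalent $S_x$.

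The main obstacle I anticipate is not the counting but the verification that the simultaneous surgery on infinitely many lines genuinely produces a \emph{continuous} flex of the \emph{fixed} framework $\G_{kag}$, rather than of a framework whose graph or edge lengths have been altered: one must check that after reflecting one half-plane about the orthogonal line through $p_*$, the reflected and unreflected pieces match up along every cut line $L_n$ for all $t$ simultaneously, i.e.\ that the edges bridging the cut have consistent lengths and endpoints in $(G,p^S(t))$, and that this holds compatibly at every line in $S$ at once (the strips between consecutive surgery lines must not be ``over-constrained'' by having their two bounding surgeries impose conflicting reflections). This is where the bilateral symmetry of the alternation flex about each $L_n$, together with the fact that consecutive surgery lines bound a strip on which the flex is determined by its boundary behaviour, is exactly what is needed; I would isolate this as a lemma about a single strip and then assemble. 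A secondary technical point is to confirm flow-periodicity in the stated sense — that $(G,p^S(t))$ is $\T^t$-periodic for the appropriate (possibly reduced) translation group — which again reduces to the corresponding property of the alternation flex on a single strip.
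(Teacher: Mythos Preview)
Your proposal is correct and follows essentially the same approach as the paper: the paper's ``proof'' is really just the paragraph preceding the theorem, which describes the cut--reflect--rejoin surgery and asserts that performing it on countably many parallel lines yields the result, and you are filling in exactly those details (parametrisation by subsets $S\subseteq\bZ$, the matching verification along cut lines via bilateral symmetry, and the distinctness argument via the geometric invariant of transition-line location). Your level of care about the matching of strips and about flow-periodicity for aperiodic $S$ (rank-one periodicity along the surgery lines, as sanctioned by the paper's ``infinite linear supercell'' remark) actually exceeds what the paper provides; the only minor slip is that the specific binary-expansion family $S_x$ need not consist of pairwise translation-inequivalent sets, but your prior countable-orbit argument already suffices and makes that example unnecessary.
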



\subsection{Crystal frameworks and periodic infinitesimal flexibility.}
Let $\C= (G,p)= (F_v, F_e, \bZ^2)$ be a crystal framework in the plane  for the integer translation isometry group. A natural form of infinitesimal flex for $\C$
is that of a $1$-cell periodic flex in the following sense.

\begin{definition}
A vector $u= (u_v)_{v \in V}$ in the real vector space $\H_v$, or in the complex vector space $\K_v$, is a $1$-cell-periodic infinitesimal flex
for $\C$ (or simply a periodic flex if there is no ambiguity)  if $u \in \ker R(G,p)$
and $u_{\kappa +n} = u_\kappa$ for all $\kappa \in F_v$ and $n \in \bZ^d$.
\end{definition}

Such an infinitesimal flex $u$ is a bounded sequence of vectors  determined by
periodic extension of what we may call the \textit{motif flex vector}
$u_{motif} = (u_\kappa)_{\kappa\in F_v}.$
There is  a one-to-one correspondence between
these periodic flexes and the finite vectors that are in the kernel of
the \textit{motif rigidity matrix} $R_m(G,p)$ which we  may   define as the
natural "periodic completion"
of the $|F_e| \times d|F_v|$ submatrix of $R(G,p)$.
This is the natural representing matrix for $R(G,p)$ viewed as a linear
transformation between the finite dimensional subspaces of $\bZ^2$-periodic vectors.
If there are no reflexive edges in the motif then

Similarly we define periodic infinitesimal stresses as those which correspond to periodic extensions of vectors
in the cokernel of the motif rigidity matrix.

The crystal framework
$\C$, with given motif and discrete translation group indexed by $\D$, is said
to be \textit{$1$-cell-periodically isostatic} (or simply periodically isostatic) if the only $1$-cell periodic flexes are translation flexes and if there are no nontrivial $1$-cell periodic stresses.

Once again it can be helpful to consider a flat torus model for such frameworks.
Note for example that the generic periodic framework defined by Figure 17 is
periodically rigid and indeed periodically isostatic.

The following interesting periodic variant of Laman's theorem has been obtained recently by E. Ross  \cite{ros}.
Let us say that the planar periodic framework $\C$ is topologically proper if
$\C$ is connected and for every  torus subframework motif $F_v', F_e'$ with
$2|F_v'| - |F_e'| = 2$ there is an
edge cycle from $F_e'$ which properly wraps around the torus in the sense that the associated homotopy class is nonzero.

\begin{thm}
Let $\C= (F_v, F_e, \bZ^2)$ be an infinite framework in the plane which is periodic for the integer translation group and is topologically proper.
Then following are equivalent.

(i) $
2|F_v| - |F_e| = 2
$
and for all edge induced submotifs $F_v', F_e'$ we have
$2|F_v'| - |F_e'| \geq 2$.

(ii) $\C$ is periodically isostatic in the sense that the periodic vectors in $\ker R(G,p)$ are spanned by the two translations (periodic rigidity) and the periodic vectors in
$\coker R(G,p)$ are zero (periodic stress free).
\end{thm}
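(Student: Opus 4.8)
The plan is to recast everything in terms of the \emph{motif rigidity matrix} $R_m = R_m(G,p)$ discussed above, the real $|F_e| \times 2|F_v|$ matrix representing $R(G,p)$ on the finite-dimensional space of $\bZ^2$-periodic vectors. The two translation motif flex vectors $(1,0,1,0,\dots)$ and $(0,1,0,1,\dots)$ lie in $\ker R_m$ for every realisation, so $\dim\ker R_m \geq 2$ always, and rank--nullity gives $\dim\ker R_m = 2|F_v| - \operatorname{rank} R_m$. Hence condition (ii) --- the periodic flexes are spanned by the two translations and there are no periodic stresses, i.e.\ $R_m$ has full row rank --- is equivalent to the single numerical statement $\operatorname{rank} R_m = |F_e| = 2|F_v| - 2$. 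As is standard for Laman-type theorems one works with a realisation $p$ whose motif coordinates are algebraically independent, so that $\operatorname{rank} R_m$ attains the maximum of the ranks over all periodic realisations of the given abstract motif; the content of the theorem is then that this combinatorial maximum equals $|F_e|$ precisely when the sparsity conditions in (i) hold.

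The implication (ii) $\Rightarrow$ (i) is the routine half. The identity $2|F_v| - |F_e| = 2$ is immediate from the reformulation above. For an edge-induced submotif $(F_v', F_e')$ the rows of $R_m$ indexed by $F_e'$ have nonzero entries only in the $2|F_v'|$ columns indexed by $F_v'$, and the resulting block is itself the motif rigidity matrix of the sub-framework carried by $(F_v', F_e')$. Full row rank of $R_m$ forces these rows to be linearly independent; since the two translations lie in the kernel of that block, its rank is at most $2|F_v'| - 2$, whence $|F_e'| \leq 2|F_v'| - 2$, i.e.\ $2|F_v'| - |F_e'| \geq 2$ for every edge-induced submotif.

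The substantive implication is (i) $\Rightarrow$ (ii). Because the rank of $R_m$ at a generic $p$ is the maximum over all periodic realisations, it suffices to exhibit a single (possibly non-generic) periodic realisation whose motif rigidity matrix has rank $|F_e| = 2|F_v| - 2$. I would prove this by induction on $|F_v|$ using periodic Henneberg-type reductions on the quotient gain graph of the motif. The Maxwell equality $2|F_v| = |F_e| + 2$ forces a quotient vertex of degree $2$ or $3$. A degree-$2$ vertex is handled by the inverse of a vertex-addition move: delete it together with its two gain-labelled edges, apply the inductive hypothesis to the smaller motif --- which still satisfies (i), and which one checks remains topologically proper --- and then verify that after a suitably generic reinsertion of the vertex the two new rows of $R_m$ are independent of the others and of each other, a determinant argument identical in spirit to the finite Henneberg calculation. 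A degree-$3$ vertex is handled by the inverse of an edge-split move: after deleting the vertex one reconnects two of its former neighbours by a single new gain-labelled edge chosen so that all the submotif counts of (i) are restored, applies the inductive hypothesis, and checks that a generic edge-split extension recovers rank $2|F_v| - 2$. The base of the induction is a small, explicitly listed family of minimal $(2,2)$-tight motifs whose gains generate $\bZ^2$, each directly verified to be periodically isostatic.

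The main obstacle is the degree-$3$ reduction step on gain graphs: one must prove that an \emph{admissible} edge-split always exists, meaning the two neighbours and the gain (wrapping) vector on the new edge can be chosen so that no submotif acquires $2|F_v'| - |F_e'| < 2$ and so that the contracted motif is again topologically proper, so that the inductive hypothesis genuinely applies. This is the periodic analogue of the delicate combinatorial lemma underlying the hard direction of Laman's theorem, and the hypothesis of topological properness is exactly what makes the homotopy-class bookkeeping work out. By contrast, the rank-stability-under-moves computations --- that a generic vertex addition or edge split preserves the maximal rank $2|F_v| - 2$ --- are routine adaptations of the classical Henneberg arguments to the $2|F_v|$-column periodic matrix.
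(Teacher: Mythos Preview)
The paper does not actually prove this theorem. It is stated as a result ``obtained recently by E.\ Ross'' and attributed to the reference \cite{ros} (a private communication); no argument, not even a sketch, is given in the paper. So there is nothing to compare your proposal against.

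For what it is worth, your outline is along the correct lines and is essentially the strategy that Ross (and later Ross--Schulze--Whiteley) use for results of this type: reinterpret periodic isostaticity as the full-row-rank condition $\operatorname{rank} R_m = |F_e| = 2|F_v|-2$ on the motif rigidity matrix, and then run a Henneberg-style induction on the quotient gain graph. Your identification of the genuine difficulty --- showing that an admissible edge-split exists in the degree-$3$ case while preserving both the $(2,2)$-sparsity counts \emph{and} the topological properness (i.e.\ that the gains still generate $\bZ^2$) --- is accurate; that is where the real work lies, and it is not trivial. Two points to be careful about: first, the base case is not a single small motif but a small family, since after reductions one may land on different minimal $(2,2)$-tight gain graphs whose gains span $\bZ^2$; second, in the degree-$3$ reduction one must also argue that the reduced motif remains \emph{connected} and topologically proper, which requires some care with how the gain on the new edge is chosen. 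But none of this is in the paper you were given.
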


As we have seen, a periodically infinitesimally rigid framework in the sense above may nevertheless
correspond to a flat torus framework which can flex infinitesimally
(and even deform)
relative to contraction or expansion of the unit cell. Allowing such vertical and horizontal freedoms adds two further degrees of freedom to the constraint
equations. It would be of interest to obtain a similar characterisation
in this case as well as for the triply flexible affine torus.


\section{The Matricial Symbol Function of a Crystal Framework}
We now derive  matrix function operators for general (abstract) crystal frameworks.
Our approach is decidedly Hilbert space theoretic and allows for an extended conceptual
framework for rigidity analysis.
First we outline the standard identification of operators commuting with shift operators and multiplication
operators in a Fourier transform space. We  adopt complex scalars, replacing the $\H_v$ spaces of Section 3 by their complex counterparts,  $\K_v, \K_e,
\K_v^\infty, \K_v^2, $ and so forth.

\subsection{Matrix function multiplication operators.}
Let $\phi $ be a continuous complex-valued function on the unit circle $\bT$. It defines a multiplication operator $T$ on the usual complex Hilbert space $L^2(\bT)$. Its representing matrix with respect to the standard orthonormal basis $\{z^n: n \in \bZ\}$ is the $\bZ \times \bZ$ indexed matrix with entries
\[
T_{i,j} = \langle\phi z^j,z^i\rangle =  \langle\phi z^{j-i}, 1\rangle
= \hat{\phi}(i-j),
\]
the $(i-j)^{th}$ Fourier coefficient of $\phi$.

Similarly, let $\Phi(z)$ be a continuous matrix-valued function on the two or three dimensional
 torus $\bT^d$ taking values in the space of
$n \times m$ complex matrices. One can specify such a function $\Phi(z)= \Phi(z_1,\dots ,z_d)$
 in terms of a matrix of scalar
functions $\Phi(z) = [\phi_{k,l}(z)]_{k=1,l=1}^{n,m}$. Given in this way $\Phi(z)$   defines a
multiplication operator from $L^2(\bT)\otimes \bC^m$ to $L^2(\bT)\otimes \bC^n$.
Indeed, choose a basis $\{\xi_l\}$ for $\bC^m$  and an associated basis
$\{\xi_l \otimes z^p\}$ for the domain space. Similarly let  $\{\eta_{k} \otimes z^p\}$
be a basis for the codomain.
Then the operator $T$ of multiplication by $\Phi(z)$ may be defined by specifying, for each continuous function $f(z)$ on $\bT^d$,
\[
T(\xi_l \otimes f(z))= \sum_{k=1}^m \eta_{k} \otimes \phi_{k,l}(z)f(z),
\]
and extending by linearity and continuity.
Thus we can see that the representing matrix for $T$ with respect to these bases
is determined by the Fourier coefficients of the matrix
entries for $\Phi$:
\[
 \langle T(\xi_l \otimes z^p), \eta_k \otimes z^q\rangle_{\bC^m\otimes L^2(\bT^d)}
 =  \langle\phi_{k,l}(z)z^{p-q}, 1\rangle_{L^2(\bT^d)} = \hat{\phi}_{k,l}(q-p).
\]
Viewing an element of $L^2(\bT^d)\otimes \bC^m$ as a function $F(z)$ taking values in
$\bC^m$ (strictly speaking, taking values almost everywhere), and similarly for vectors in the codomain, one also considers $TF$ as the function $\Phi(z)F(z)$. This operator is usually denoted
as $M_\Phi$.

Suppose now,
starting afresh, we have  orthonormal bases
\[
\{\xi_{l,p}: 1\leq l \leq m, p \in \bZ^d\}\quad \{\eta_{k,p}: 1\leq k \leq n, p \in \bZ^d\}
\]
for Hilbert spaces such as $\K_v^2$ and $\K_e^2$ respectively.
Suppose also
 that we are given  an operator $T$ from  $\K_v^2$ to $\K_e^2$ by means of its representing matrix with respect to these bases
 and suppose moreover
that this matrix has the translational symmetry above in the sense that the matrix entries
\[
\langle T\xi_{l,p}, \eta_{k,q}\rangle_{\K_e^2}
\]
are independent of $p-q$.  Let $\F_v: \bC^n\otimes L^2(\bT^d)\to \K^2_v$  be the Fourier transform and $\F_e$ the Fourier transform for $\K^2_e$.
These are simply the unitary operators determined by the natural bijection of basis elements,
namely
$\xi_{l,p}\to \xi_{l}\otimes z^p$ and $\eta_{k,p}\to \eta_{k}\otimes z^p$, respectively.
Thus, from the scalar matrix entries the functions
$\phi_{k,l}$ are determined and hence the matrix function $\Phi(z)$. In  this way one identifies $\F_e^{-1}T\F_v$ as the associated multiplication operator $\M_\Phi$.

\subsection{The rigidity matrix as a multiplication operator}
Let us  keep in view  the special case
of the $\bZ^2$-periodic grid framework determined by an algebraically generic quadrilateral
in the unit cell.

Let $p^1,\dots ,p^4$ be four framework vertices in the unit cell
$[0,1)^2$ constituting the motif set $F_v$ of $\bZ^2$-periodic
quadrilateral grid $(\bZ^2,p)$. Let $p^t_{ij} = p^t+ (i,j)$,  for $(i,j) \in \bZ^2$, be the general framework points and
let $e^1, \dots , e^8$ denote the eight edges which form the motif set $F_e$ given by
\[
e^1 = [p^1,p^2], e^2 = [p^2,p^3], e^3 = [p^3,p^4], e^4 = [p^4,p^1],
\]
\[
e^5 = [p^2_{-1,0},p^1], e^6 = [p^3_{0,-1},p^2], e^7 = [p^4_{1,0},p^3], e^8 = [p^1_{0,1},p^4],
\]
and let $e^t_{i,j} = e^t +(i,j)$ be a typical edge.
Write the corresponding basis for the  vector space $\K^0_e$ as
$$\{\eta_{t,i,j}: 1\leq t\leq 8, (i,j) \in \bZ^2\}.$$ Also, label the natural basis for $\K^0_v$
as the set of vectors $\{\xi^x_{s,i,j}, \xi^y_{s,i,j} \}$, where $s$ ranges from $1$ to  $4$ and $(i,j)$ range through $ \bZ^2$.

The rigidity matrix $R(G,p)$
satisfies the following symmetry equations,
 $$
 W_{i,j}R(G,p)U_{i,j}= R(G,p)
 $$
 with respect to
the translation shift operators $U_{i,j}$ on $\K^{00}_v$ and $W_{i,j}$ on $\K^{00}_v$.
By the discussion above the rigidity matrix for this framework gives rise to an
$8 \times 8$ matrix function $\Phi(z_1,z_2) = (\phi_{i,j}(z_1, z_2)$ on the torus $\bT^2$ in $\bC^2$
which is determined by the equations
\[
\hat{\phi}_{k,\{x,s\}}(-(i,j))= \langle(R(G,p)\xi^x_{s,i,j}, \eta_{k,0,0} \rangle,
\]
together with a companion set of equations  for the $y$-labeled basis elements.
Furthermore there is a simple algorithm for computing the symbol matrix function
from the  motif $(F_v, F_e)$ and the motif rigidity matrix.

For a general crystal frameworks in the plane
we have the following recipe for identifying $\Phi(z_1,z_2)$.
There is an entirely similar identification of the matrix function of a crystal framework in
higher dimensions.

\begin{thm}{Let $\Phi(z)$ be the matricial symbol function of the crystal framework $\C = (F_v, F_e, \bZ^2)$. The entry of the rigidity matrix determined by the edge $e_k$ of $F_e$ and the column labeled $v^x_{s,i,j}$ (resp. $v^y_{s,i,j}$) provides the $(-i,-j)$-th Fourier coefficient of ${\phi}_{k,\{x,s\}}$ (resp. ${\phi}_{k,\{y,s\}}$).}
\end{thm}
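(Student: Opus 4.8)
The plan is to deduce the statement from the operator-theoretic dictionary of Section 5.1 between shift-commuting operators and multiplication operators, once it is checked that $R(G,p)$ carries the requisite translational symmetry. So the first step is to record, for a crystal framework $\C = (F_v, F_e, \bZ^2)$, the precise form of the intertwining relations $W_{i,j}R(G,p)U_{i,j} = R(G,p)$ for the coordinate shifts $U_{i,j}$ on $\K_v^2$ (or $\K_v^{00}$) and $W_{i,j}$ on $\K_e^2$, or, equivalently, to show that the matrix entry $\langle R(G,p)\xi^x_{s,i,j},\eta_{k,a,b}\rangle$ depends on $(i,j)$ and $(a,b)$ only through the difference $(i-a,j-b)$, and likewise for the $y$-labelled columns.

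The verification of this symmetry is where the geometry enters, and I regard it as the only point that needs real care. If $e^k \in F_e$ is the edge joining $p^{s_1}+v_1$ to $p^{s_2}+v_2$ with $v_1,v_2 \in \bZ^2$, then by Definition 4.2 the edge $e^k_{a,b} = e^k + (a,b)$ joins $p^{s_1}+v_1+(a,b)$ to $p^{s_2}+v_2+(a,b)$; hence the only nonzero entries in the row of $R(G,p)$ indexed by $\eta_{k,a,b}$ lie in the columns $v^{x}_{s_1,v_1+(a,b)}$, $v^{y}_{s_1,v_1+(a,b)}$, $v^{x}_{s_2,v_2+(a,b)}$, $v^{y}_{s_2,v_2+(a,b)}$, and equal $\pm\big((p^{s_1}+v_1)-(p^{s_2}+v_2)\big)_x$ and $\pm\big((p^{s_1}+v_1)-(p^{s_2}+v_2)\big)_y$ respectively. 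The crucial observation is that these numerical values are the coordinate differences of the untranslated edge, hence independent of $(a,b)$; this gives exactly the asserted dependence on the cell difference alone and therefore the intertwining relations.

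With that in hand, the second step is to apply the identification already carried out in Section 5.1: for an operator $T$ from $\K_v^2$ to $\K_e^2$ whose matrix entries with respect to the bases $\{\xi_{l,p}\}$ and $\{\eta_{k,q}\}$ depend only on $p-q$, one has $\F_e^{-1}T\F_v = \M_\Phi$ with $\langle T\xi_{l,p},\eta_{k,q}\rangle = \hat{\phi}_{k,l}(q-p)$. Since $F_v$ and $F_e$ are finite and $G$ is locally finite, each $\phi_{k,l}$ has only finitely many nonzero Fourier coefficients, so $\Phi(z)$ is a matrix of Laurent polynomials and the identification is unambiguous. Specialising to $T = R(G,p)$, $l = \{x,s\}$, $q=(0,0)$, $p=(i,j)$ gives
\[
\hat{\phi}_{k,\{x,s\}}\big(-(i,j)\big) = \langle R(G,p)\,\xi^x_{s,i,j},\ \eta_{k,0,0}\rangle,
\]
whose right-hand side is by definition the entry of the rigidity matrix in the row of the motif edge $e_k$ and the column labelled $v^x_{s,i,j}$. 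That is precisely the claim for the $x$-columns, and the identical argument with $l = \{y,s\}$ handles the $y$-columns.

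The only genuine obstacle is the bookkeeping in the first step: one must keep straight that a motif edge $e^k$ may join vertices lying in cells other than the one indexed by $e^k$, so that the cell indices of its two endpoint columns are $v_1+(a,b)$ and $v_2+(a,b)$ rather than simply $(a,b)$, and the intertwining must be set up with exactly these shifts. Once that is done, everything else is a direct appeal to the Fourier-transform computation of Section 5.1, together with its sign convention (the Fourier coefficient index being $q-p$), which is what turns $(i,j)$ into $(-i,-j)$ in the final statement.
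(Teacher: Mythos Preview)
Your proposal is correct and follows essentially the same route as the paper: the paper does not isolate a formal proof of this theorem but derives it, as you do, by first noting the translational symmetry $W_{i,j}R(G,p)U_{i,j}=R(G,p)$ and then invoking the Section~5.1 identification $\langle T\xi_{l,p},\eta_{k,q}\rangle=\hat{\phi}_{k,l}(q-p)$ with $q=(0,0)$. Your explicit verification that the coordinate differences in the row of $e^k_{a,b}$ are independent of $(a,b)$ is a welcome elaboration of a point the paper leaves implicit.
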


Note that for the motif for the periodic quadrilateral
grid each edge has vertices with different $s$ index. This is rather typical
and in such cases it follows that ${\phi}_{k,\{x,l\}}$ is either zero or has one nonzero
Fourier coefficient.
In the case of a "reflexive" edge in $F_e$, of the form $[p_{\kappa,0}, p_{\kappa,\delta}]$
there are entries in the columns for $\kappa$ corresponding to
$(1-\ol{z}^\delta)v_e$ where $v_e$ is the usual vector of coordinate differences appearing in the rigidity matrix for the edge $e$. That is, $v_e= p_{\kappa,0}- p_{\kappa,\delta}$.

Thus we may obtain the following identification where
the direct construction of $\Phi$ from $\C$ is given above.

\begin{thm}
Let $\C =(F_v, F_e, \bZ^d)$ be a crystal framework with $m=|F_e|$,
$n=|F_v|$ and with rigidity operator $R$ from $\K_v^2$ to $\K_e^2$.

(i) The motif
$(F_v, F_e)$ determines a
matrix-valued trigonometric function
$\Phi : \bT^d \to M_{m,dn}(\bC)$
for which there is a
 unitary equivalence
$\F_e^{-1}R\F_v = M_\Phi $
where $M_\Phi$ is
the multiplication operator
\[
M_\Phi :  \bC^{nd} \otimes L^2{(\bT^d)} \to \bC^{m}  \otimes L^2(\bT^d).
\]

(ii) The framework $\C$ is square-summably rigid (resp. square-summably stress-free)
if the column rank (resp. row rank)
of $\Phi(z)$ is maximal for almost every $z \in \bT^d$.
\end{thm}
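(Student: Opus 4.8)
The plan is to derive both parts from the dictionary of the preceding subsection between operators intertwining the coordinate shifts and multiplication operators on a Fourier transform space. For part (i) the first step is to observe that the rigidity matrix $R=R(G,p)$, a priori defined only on finitely supported vectors, extends to a bounded operator $R\colon\K_v^2\to\K_e^2$: because $\C$ is generated by a finite motif and the integer translation group, every edge of $\C$ is a translate of one of the $m=|F_e|$ motif edges, so the edge vectors $p_i-p_j$ take only finitely many (nonzero) values and the nonzero entries of $R$ are uniformly bounded; combined with the bounded vertex degree of a crystal framework this bounds the number of nonzero entries per row and per column, and an elementary Cauchy--Schwarz estimate gives $\|R\|<\infty$. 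The second step records translational symmetry: writing $U_g$ ($g\in\bZ^d$) for the shift $\xi_{l,p}\mapsto\xi_{l,p+g}$ on $\K_v^2$ and $W_g$ for $\eta_{k,p}\mapsto\eta_{k,p+g}$ on $\K_e^2$, the identity $p_{v+g}-p_{w+g}=p_v-p_w$ shows directly that $W_gRU_g^{-1}=R$, i.e. the entries $\langle R\xi_{l,p},\eta_{k,q}\rangle$ depend only on $p-q$. The third step is then a direct application of the correspondence recalled in the previous subsection: such an operator is carried by the Fourier transforms $\F_v,\F_e$ to multiplication by a matrix symbol $\Phi$, and since each motif edge meets vertices in only finitely many translates of the unit cell the matrix of $R$ is banded, so the coefficients $\hat{\phi}_{k,l}$ are finitely supported and $\Phi$ is a matrix-valued trigonometric polynomial, in particular continuous, with values in $M_{m,dn}(\bC)$. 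The explicit description of the entries of $\Phi$ in terms of the motif rigidity matrix is exactly the recipe of the preceding theorem, which we invoke.

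For part (ii) I would use that $\F_v,\F_e$ are unitary and $R=\F_eM_\Phi\F_v^{-1}$, so $\ker R=\F_v(\ker M_\Phi)$, whence $\ker R=\{0\}$ iff $\ker M_\Phi=\{0\}$. One first checks that a square-summable infinitesimal flex is precisely a nonzero element of $\ker(R\colon\K_v^2\to\K_e^2)$: for $u\in\K_v^2$ the formal product $R(G,p)u$ has each entry a finite sum and agrees with the bounded operator $R$ applied to $u$, and since $R$ has real entries the existence of a nonzero complex square-summable flex is equivalent to that of a nonzero real one (pass to real and imaginary parts). Now $M_\Phi F=0$ means $\Phi(z)F(z)=0$ for almost every $z\in\bT^d$; if $\Phi(z)$ has full column rank $nd$ (equivalently, is injective) for almost every $z$, then $F(z)=0$ for almost every $z$, hence $F=0$ in $\bC^{nd}\otimes L^2(\bT^d)$, so $\ker M_\Phi=\{0\}$ and $\C$ is square-summably rigid. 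The self-stress statement is dual: the square-summable self-stresses form $\ker R^t=\ker(\F_vM_{\Phi^*}\F_e^{-1})\cong\ker M_{\Phi^*}$, where $\Phi^*(z)=\Phi(z)^*$ is the $nd\times m$ conjugate transpose (and $R^t=R^*$ since $R$ is real); the identical argument shows this is $\{0\}$ as soon as $\Phi(z)^*$ is injective for almost every $z$, equivalently $\Phi(z)$ has full row rank $m$ for almost every $z$.

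\textbf{Main obstacle.} I do not expect a genuine obstacle: the substance is all packaged in the preceding subsection and what remains is bookkeeping. The points that need care are (a) the boundedness of $R$ on $\K_v^2$, which is routine but must be stated since $M_\Phi$ has to be an honest operator; (b) the identification of $\ker R(G,p)$ for the formal infinite matrix with the Hilbert-space kernel of $R$, together with the passage between real and complex scalars for both flexes and stresses; and (c) the measure-theoretic step, which is immediate once the hypothesis is read as ``$\ker\Phi(z)=\{0\}$ for almost every $z$''. I would also remark that, $\Phi$ being a trigonometric polynomial, the set where its rank drops is the common zero set of finitely many trigonometric polynomials (the maximal minors), hence is either all of $\bT^d$ or a proper closed set of Lebesgue measure zero; this is why ``maximal rank almost everywhere'' is the natural hypothesis, and its failure on a positive-measure set, via a measurable selection of kernel vectors, is what one would exploit for the converse, which the theorem does not claim.
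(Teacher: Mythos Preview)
Your proposal is correct and is essentially the approach of the paper: the theorem is stated without a separate formal proof and is meant to follow from the preceding discussion in Sections~5.1--5.2, namely the Fourier dictionary between translation-commuting operators and multiplication operators together with the commutation relations $W_gRU_g^{-1}=R$ and the explicit recipe of Theorem~5.1. Your write-up simply makes explicit the bookkeeping points (boundedness of $R$, real/complex passage, and the pointwise kernel argument for part~(ii)) that the paper leaves implicit.
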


The following corollary
shows that the existence of a square summable flex
is  a rather strong condition for a crystal framework and in fact "typical" frameworks in Maxwell counting equilibrium have no such flexes.
What is of particular significance for such frameworks however is the presence
of approximate flexes, in the sense noted in Section 3, as these are related to the
local rank degeneracies of $\Phi(z)$ and to periodic flexes.

\begin{cor}
The following are equivalent for a crystal framework with Maxwell counting equilibrium.

(i) $\C$ has an  nonzero internal (finitely supported)  infinitesimal flex.

(ii) $\C$ has an nonzero summable infinitesimal flex.

(iii)  $\C$ has an nonzero square-summable infinitesimal flex.

\end{cor}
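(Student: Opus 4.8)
The plan is to derive the only nontrivial implication, (iii)$\Rightarrow$(i); the implications (i)$\Rightarrow$(ii)$\Rightarrow$(iii) are immediate, since a finitely supported sequence is summable and a summable sequence of vectors is square-summable. Working with complex scalars costs nothing here: the real and imaginary parts of a complex square-summable (resp. finitely supported) infinitesimal flex are real ones, at least one nonzero when the original is. Maxwell counting equilibrium for the crystal framework $\C=(F_v,F_e,\bZ^d)$ means that the symbol function is square, i.e. $|F_e|=d|F_v|$, so by the preceding theorem the matricial symbol function $\Phi$ is a \emph{square} matrix-valued trigonometric polynomial $\Phi\colon\bT^d\to M_N(\bC)$ with $N=|F_e|=d|F_v|$, and there is a unitary equivalence $\F_e^{-1}R\F_v=M_\Phi$.

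First I would transport a square-summable flex through the Fourier transform: by the unitary equivalence, a nonzero square-summable infinitesimal flex of $\C$ corresponds to a nonzero $F\in\bC^N\otimes L^2(\bT^d)$ with $\Phi(z)F(z)=0$ for almost every $z\in\bT^d$. Since $\det\Phi(z)$ is again a trigonometric polynomial in $z=(z_1,\dots,z_d)$, either it vanishes identically or its zero set is Lebesgue-null in $\bT^d$ (fix all but one variable generically, use that a nonzero one-variable Laurent polynomial has finitely many zeros on $\bT$, and apply Fubini; alternatively, a nonzero real-analytic function on the connected manifold $\bT^d$ has measure-zero zero set). In the latter case $\Phi(z)$ would be invertible off a null set, forcing $F(z)=0$ almost everywhere and contradicting $F\neq0$. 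Hence $\det\Phi\equiv0$ on $\bT^d$. (Equivalently, by part (ii) of the preceding theorem the negation of (iii) is square-summable rigidity, which for a square symbol is the condition $\det\Phi\not\equiv0$.)

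The heart of the argument is then purely algebraic. View the entries of $\Phi$ as elements of the Laurent polynomial ring $A=\bC[z_1^{\pm1},\dots,z_d^{\pm1}]$, an integral domain (a localisation of a polynomial ring over a field) with fraction field $K$. Since $\det\Phi=0$ in $A$, the matrix $\Phi$ is singular over $K$, so there is a nonzero $V\in K^N$ with $\Phi V=0$; multiplying through by a common denominator $b\in A\setminus\{0\}$ produces a nonzero vector $U=bV\in A^N$ with $\Phi(z)U(z)=0$ identically on $\bT^d$. But $A^N$ is exactly the space of $\bC^N$-valued trigonometric polynomials, which under $\F_v$ is the space $\K_v^{00}$ of finitely supported vectors in $\K_v$; thus $u:=\F_v U$ is a nonzero finitely supported vector, and $Ru=\F_e(M_\Phi U)=0$. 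Being finitely supported, $u$ is not a rigid-body motion of the (unbounded) framework $\C$ — translation flexes are nowhere zero and infinitesimal rotation flexes are unbounded — so $u$ is a genuine nonzero internal infinitesimal flex, establishing (i).

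The main obstacle is the pair of steps that convert analysis into algebra: the null-set claim for the zero locus of a nontrivial multivariable trigonometric polynomial, and, more essentially, the upgrade of an $L^2$ element of $\ker M_\Phi$ to a polynomial (hence finitely supported) one. This second step is where the symbol-function description of the rigidity operator does its real work, reducing an infinite-dimensional statement about $\ker R$ to ordinary linear algebra over the ring $\bC[z_1^{\pm1},\dots,z_d^{\pm1}]$.
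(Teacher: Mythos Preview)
Your proof is correct and reaches (iii)$\Rightarrow$(i) by the same two-stage reduction as the paper: first deduce that $\det\Phi$ vanishes identically as a Laurent polynomial, then produce a Laurent-polynomial vector in $\ker\Phi$. The difference is in the second stage. The paper works with the adjugate matrix $\tilde\Phi$ satisfying $\Phi\tilde\Phi=(\det\Phi)I$: if $\tilde\Phi\neq0$ any nonzero column is a polynomial kernel vector, while the degenerate case $\tilde\Phi=0$ is handled by a separate minimal-polynomial lemma (since $\det\Phi=0$ forces the minimal polynomial $q$ of $\Phi$ over the integral domain $A$ to satisfy $q(0)=0$, so $q(\Phi)=\Phi\,q_1(\Phi)=0$ with $q_1(\Phi)\neq0$, and any nonzero column of $q_1(\Phi)$ works). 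Your route---pass to the fraction field $K$ of $A=\bC[z_1^{\pm1},\dots,z_d^{\pm1}]$, take a nonzero $K$-kernel vector, and clear denominators---is shorter and avoids the case split entirely; the paper's route is a bit more explicit (cofactors give the flex directly when the adjugate survives). One minor remark: your closing sentence about rigid-body motions is superfluous here, since an ``internal infinitesimal flex'' in the paper's usage is simply a nonzero finitely supported vector in $\ker R$, with no quotient by $\H_{rig}$ required.
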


\begin{proof} Note that
(i) implies (ii) and  (ii) implies (iii) so it remains to show (iii) implies (i).

Suppose that the matrix $\Phi(z)$  has a square-summable
vector $f(z)$ in its kernel. Then $\det(\Phi(z))=0$ for almost all points in the
support of $f(z)$. Since $\det\Phi(z)$ is a mutivariable trigonometric polynomial
it necessarily vanished identically, that is the polynomial $\det\Phi(z)$ is the zero
polynomial.

Recall that for any square matrix $X$ over a ring we have \\
$X\tilde{X}=\det(X)I_n$
where $\tilde{X}$ is given by the usual formula \\
$\tilde{X}_{i,j}= (-1)^{i+j}X_{i,j}\det (X^{ij})$ involving the cofactors $(X^{ij})$.
In particular if $\tilde{X}$ is not zero and $\det(X)=0$ then for a nonzero column
vector $f(z)$ of $\tilde{X}$ we have $Xf(z)=0$. Applying this to $X=\Phi$ provides the desired polynomial (finitely supported) vector in this case.

If $\tilde{X}$ happens to be the zero matrix we make use of the minimal polynomial lemma
below.
To apply the lemma let $q$ be the minimum polynomial of $X$
so that $q(X)=0$.
By the lemma we have $q(X) = Xq_1(X)$ and $q_1(X) \neq 0$ by minimality. Thus there is a
nonzero vector $f(z)$ in the range of $q_1(X)$ and this provides the desired local flex.
\end{proof}

\begin{lem} Let $q(\lambda)$ in $R[\lambda]$ be the minimal polynomial of an
$n$ by $n$ matrix $X$ whose entries lie in an integral domain $R$ and
suppose that $\det X=0$. Then $q(0)=0$.
\end{lem}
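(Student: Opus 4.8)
The plan is to argue via the characteristic polynomial and the Cayley--Hamilton theorem, working over the field of fractions $K$ of the integral domain $R$ so that standard linear algebra is available. Write the minimal polynomial as $q(\lambda)=\lambda^k + a_{k-1}\lambda^{k-1}+\dots + a_1\lambda + a_0$, with $a_i \in R$ (one can clear denominators, but in fact the minimal polynomial over $K$ of a matrix with entries in $R$ already has coefficients in $R$ when $R$ is integrally closed; to avoid that subtlety I would simply take $q$ to be the minimal polynomial over $K$ and note $q(0)=a_0$). The claim $q(0)=0$ is exactly the assertion $a_0=0$, so the whole content is: if $X$ is singular then the constant term of its minimal polynomial vanishes.

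First I would suppose for contradiction that $a_0 \neq 0$. From $q(X)=0$ we get
\[
X\bigl(X^{k-1}+a_{k-1}X^{k-2}+\dots+a_1 I\bigr) = -a_0 I.
\]
Since $a_0$ is a nonzero element of the integral domain $R$, it is invertible in the fraction field $K$, so this identity exhibits a matrix $Y = -a_0^{-1}(X^{k-1}+\dots+a_1 I)$ over $K$ with $XY = YX = I$. Hence $X$ is invertible as a matrix over $K$, which forces $\det X$ to be a unit in $K$, in particular $\det X \neq 0$ in $K$ and therefore $\det X \neq 0$ in $R$. This contradicts the hypothesis $\det X = 0$, so $a_0 = 0$, i.e. $q(0)=0$.

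Alternatively, and perhaps more cleanly, I would observe that the minimal polynomial divides the characteristic polynomial $c(\lambda)=\det(\lambda I - X)$ in $K[\lambda]$, and $c(0)=\det(-X)=(-1)^n\det X = 0$. Thus $\lambda \mid c(\lambda)$, so $0$ is a root of $c$, hence an eigenvalue of $X$ over the algebraic closure of $K$; since the minimal polynomial has the same roots as the characteristic polynomial (over a splitting field), $0$ is a root of $q$ as well, giving $q(0)=0$. I do not expect any real obstacle here: the only thing to be slightly careful about is that the "minimal polynomial" is taken over the fraction field $K$ rather than over $R$ itself (so that the division $q \mid c$ and the root structure are valid), and that $q(0)$ makes sense as the evaluation of the polynomial at $0$, which is what is used in the application (where $X=\Phi(z)$ over $R=\mathbb{C}[z_1^{\pm 1},\dots,z_d^{\pm 1}]$ and $q(X)=Xq_1(X)$ with $q_1\neq 0$ by minimality). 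I would present the fraction-field argument as the main proof and mention the characteristic-polynomial viewpoint as a remark.
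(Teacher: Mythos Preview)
Your proposal is correct. Your main argument is in fact more direct than the paper's: assuming $q(0)=a_0\neq 0$, you factor $q(X)=0$ as $X\cdot(\text{polynomial in }X)=-a_0 I$ and invert $a_0$ in the fraction field $K$ to conclude $X\in GL_n(K)$, contradicting $\det X=0$. This avoids any passage to an algebraic closure and uses nothing beyond the integral-domain hypothesis. The paper instead takes your alternative route: it invokes the characteristic polynomial $p(\lambda)=\det(\lambda I-X)$, passes to the algebraic closure of $K$, uses that $q$ and $p$ have the same linear factors (with $s_i\le r_i$), and argues that $p(0)=0$ forces $q(0)^N=0$ for large $N$, hence $q(0)=0$ since $R$ has no nilpotents. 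So your ``remark'' is essentially the paper's proof, while your main proof is a genuinely shorter alternative; the gain is that you need neither Cayley--Hamilton nor the root-comparison fact for minimal versus characteristic polynomials. Your caveat about whether the minimal polynomial over $K$ lies in $R[\lambda]$ is well placed, but since the lemma already hypothesises $q\in R[\lambda]$ you may simply take that as given; in the application $R$ is a Laurent polynomial ring, so Gauss's lemma handles it anyway.
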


\begin{proof} Let $p(\lambda)$ be the characteristic polynomial $\det (\lambda I_n-X)$.
Recall from the Cayley-Hamilton theorem that $p(X)=0$.
In the algebraic closure of the field of
fractions of $R$ the linear factors of $q(\lambda)$ and $p(\lambda)$ agree.
See Lange \cite{lan-alg} for example. Thus
\[
q(\lambda) = \prod (\lambda-\alpha_i)^{s_i}, \quad p(\lambda) = \prod (\lambda-\alpha_i)^{r_i}
\]
with $s_i \leq r_i$ for each $i$. In particular for suitably large $n$
the polynomial $q(0)^n$ is divisible by $p(0)$. Since $\det X=0$ it follows that $p(0)=0$
and so $q(0)^n=0$. Since $q(0)$ is in $R$ it is equal to zero and the proof is complete.
\end{proof}

\subsection{From motifs to matrix functions.} We now consider some examples.
Adjusting notation, write $z,w$ for the coordinates variables of $\bT^2$ so that a trigonometric
polynomial takes the form a finite sum
\[
\phi(z,w) = \sum_{i,j} \hat{\phi}(i,j)z^iw^j.
\]
Examining the recipe of Theorem 5.1 above in the case of the periodic quadrilateral
grid we see that the internal edges
$e^1, \dots ,e^4$ of the motif provide
four rows for $\Phi(z,w)$
whose entries are constant functions
with constants corresponding to those of the rows for the rigidity
matrix $R(G,p)$. The other edges
provide rows according to
the following simplified monomial rule. As we have intimated above, this rule applies generally
when all the "external" edges of the motif have vertices with distinct $s$ index:
\bigskip

\textit{ the entries of the $e^{th}$ row of $\Phi(z,w)$ that appear in the columns
for the external (non-motif) vertex  are
the corresponding entries for the motif rigidity matrix multiplied by the
monomial $\ol{z}^i\ol{w}^j$, where $(i,j)$ is the shift index for the cell occupied
by the external vertex.}
\bigskip

Thus we obtain the matrix identification in the next proposition.
We write $p_i = (x_i, y_i)$, for $ i=1,\dots ,4$.
Note, for example, that the entry $x_{12}+1 = x_1-(x_2-1)$ denotes the constant function
corresponding to the entry of $R(G,p)$
for row $e^5$ and column $x_1$, while $-(x_{12}+1){z}$ is the entry
corresponding to the $x$-coordinate of the external vertex of $e^5$.

\begin{prop}
The matricial symbol function $\Phi(z,w)$ of the periodic quadrilateral grid
framework $(\bZ^2,p)$ for the four vertex motif
is the  matrix function on $\bT^2$ given by
{\small
$$ \left[ \begin {array}{cccccccc} { x_{12}}&{ y_{12}}&-{ x_{12}}&-{
y_{12}}&0&0&0&0\\\noalign{\medskip}0&0&{ x_{23}}&{ y_{23}}&-{ x_{23}}&-{
 y_{23}}&0&0\\\noalign{\medskip}0&0&0&0&{ x_{34}}&{ y_{34}}&-{ x_{34}}
&-{ y_{34}}\\\noalign{\medskip}-{ x_{41}}&-{ y_{41}}&0&0&0&0&{ x_{41}}
&{ y_{41}}\\\noalign{\medskip}{ x_{12}}+1&{ y_{12}}&-{
(x_{12}}+1){z}&-{ y_{12}}{z}&0&0&0&0\\\noalign{\medskip}0&0&{ x_{23}}&{ y_{23}}
+1&-{ x_{23}}{w}&-{ (y_{23}}+1){w}&0&0\\\noalign{\medskip}0&0&0&0
&{ x_{34}}-1&{ y_{34}}&(1-{x_{34}})\ol{z}&-{ y_{34}}\ol{z}\\\noalign{\medskip}-{ x_{41}}\ol{w}&(1-{ y_{41}})\ol{w}&0&0&0&0&{ x_{41}}&{ y_{41}}-1\end {array} \right]
$$
}
Furthermore, if the four points in the unit cell have algebraically independent coordinates then $(\bZ^2,p)$ is square-summably isostatic.
\end{prop}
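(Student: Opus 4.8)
The plan is to derive the claim from Theorem~5.2. Here $m=|F_e|=8$ and $dn=2|F_v|=8$, so $\Phi$ takes values in the $8\times 8$ matrices; by Theorem~5.2(ii), applied to both the column rank and the row rank (which agree for a square matrix), it is enough to show that $\det\Phi(z,w)$ is not identically zero, for then $\Phi(z,w)$ has full rank $8$ off the zero set of $\det\Phi$ --- a set of measure zero in $\bT^2$ --- and one obtains simultaneously square-summable rigidity and the absence of square-summable self-stress, that is, square-summable isostaticity. Reading off the displayed matrix, every entry of $\Phi$ is a monomial in $z^{\pm1},w^{\pm1}$ times an integer-affine function of the eight coordinates $x_1,\dots,y_4$, so $\det\Phi=\sum_{(a,b)}c_{a,b}(x_1,\dots,y_4)\,z^aw^b$ with $c_{a,b}\in\bZ[x_1,\dots,y_4]$. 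Because these eight coordinates are algebraically independent over $\bQ$, it suffices to exhibit one index $(a,b)$ for which $c_{a,b}$ is a \emph{nonzero} element of $\bZ[x_1,\dots,y_4]$.

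I would take the extreme monomial $(a,b)=(1,1)$. In $\Phi$ the letter $z$ occurs only in row $e^5$, as a factor $z$ in the two columns associated with $p^2$, and in row $e^7$, as $\bar z$ in the columns associated with $p^4$; similarly $w$ occurs only in row $e^6$ (factor $w$, columns of $p^3$) and in row $e^8$ (factor $\bar w$, columns of $p^1$). Hence a term of the generalized Leibniz expansion of $\det\Phi$ contributes to $c_{1,1}$ precisely when it takes its entry from $e^5$ in a column of $p^2$, from $e^7$ in a column of $p^3$, from $e^6$ in a column of $p^3$, and from $e^8$ in a column of $p^4$. A short chain of forced deductions, using that $\Phi$ is block cyclic bidiagonal (each reflexive edge $e^5,\dots,e^8$ couples the column pair of one motif vertex to that of the next), shows that this pins down the distribution of all eight rows among the four column pairs: necessarily the pair of $p^1$ is matched with $\{e^1,e^4\}$, that of $p^2$ with $\{e^2,e^5\}$, that of $p^3$ with $\{e^6,e^7\}$, and that of $p^4$ with $\{e^3,e^8\}$. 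Summing over the two internal $x$/$y$ assignments within each pair then identifies $c_{1,1}$, up to an overall sign, with the product of the four $2\times2$ minors so determined, with the scalars $z,w$ stripped from the $e^5$- and $e^6$-rows:
\[
c_{1,1}=\pm\,\det\!\begin{bmatrix}x_{12}&y_{12}\\-x_{41}&-y_{41}\end{bmatrix}
\det\!\begin{bmatrix}x_{23}&y_{23}\\-(x_{12}{+}1)&-y_{12}\end{bmatrix}
\det\!\begin{bmatrix}-x_{23}&-(y_{23}{+}1)\\x_{34}{-}1&y_{34}\end{bmatrix}
\det\!\begin{bmatrix}-x_{34}&-y_{34}\\x_{41}&y_{41}{-}1\end{bmatrix}.
\]
Each of these four determinants is a nonzero element of $\bQ[x_1,\dots,y_4]$: the first equals $x_{41}y_{12}-x_{12}y_{41}$, which contains the monomial $x_4y_1$ with coefficient $1$, and the remaining three each carry an uncancellable monomial with coefficient $\pm1$ (respectively $x_2y_1$, $x_2y_3$, $x_4y_3$). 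Since $\bQ[x_1,\dots,y_4]$ is an integral domain, the product $c_{1,1}$ is nonzero, which completes the reduction, and with it the proof.

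The single genuinely non-automatic step is the combinatorial assertion that the monomial $z^1w^1$ forces the unique row/column-pair matching described above; it is short, but must be carried out carefully, and it is precisely here that the block-cyclic-bidiagonal shape of $\Phi$ is used. One can sidestep it altogether by instead substituting explicit numerical values for all eight coordinates and for $(z,w)$ --- for example a small perturbation of the square grid $\G_{\bZ^2}$ together with $z=w=-1$, staying off the line $z=1$ where the grid's shear-type periodic-modulo-phase flexes sit --- and checking $\det\Phi\neq0$ by a routine $8\times 8$ determinant evaluation; algebraic independence of the coordinates then delivers the proposition in the same fashion.
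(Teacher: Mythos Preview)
Your argument is correct. The reduction via Theorem~5.2 is exactly the paper's, and the square case $m=dn=8$ reduces the question to $\det\Phi$ not vanishing identically. Your extraction of the $z^1w^1$ coefficient is sound: the forced matching of the rows $e^5,\dots,e^8$ to the column pairs of $p^2,p^3,p^3,p^4$ (respectively) and the subsequent forcing of $e^1,\dots,e^4$ is correct, and once the matching is fixed the contribution is, up to sign, the product of the four indicated $2\times2$ minors. Each factor is visibly a nonzero element of $\bQ[x_1,\dots,y_4]$, so by integrality of the polynomial ring the product is nonzero; algebraic independence of the coordinates then finishes.

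By way of comparison, the paper's own proof is considerably less explicit: it observes that the first four rows of $\Phi$ (the scalar rows coming from the four internal edges $e^1,\dots,e^4$) are linearly independent because $p_1,\dots,p_4$ are generic, and then asserts, with a reference to ``elementary linear algebra'', that the full $8\times8$ matrix has rank~$8$ for almost every $(z,w)$. Your approach is a genuine filling-in of this assertion, and in fact a cleaner one than the route the paper gestures at: rather than leveraging the rank of the scalar subblock and then arguing that the $z,w$-dependent rows generically complete it, you go straight to a single nonzero Laurent coefficient of the determinant. The alternative you mention at the end---specialising the coordinates to explicit numbers near the square grid and evaluating at, say, $z=w=-1$---is also perfectly valid and is closer in spirit to the paper's pragmatic tone.
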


\begin{proof} The first four rows are linearly independent since
the points $p_1,\dots ,p_4$ have algebraically independent coordinates.
It can be shown from elementary linear algebra
for almost every $z,w$ the full matrix has rank $8$.
Thus  Theorem 5.2 applies.
\end{proof}

As a special case we obtain the matrix symbol function for $\G_{\bZ^2}$
determined by the eight-edged motif, namely
{\small
$$ \Phi(z,w) = 1/2\left[ \begin {array}{cccccccc} -1&0&1&0&0&0&0&0
\\\noalign{\medskip}0&0&0&-1&0&1&0&0\\\noalign{\medskip}0&0&0&0&1&0&-1&0\\\noalign{\medskip}0&-1&0&0&0&0&0&1\\\noalign{\medskip}
1&0&-{z}&0&0&0&0&0\\\noalign{\medskip}0&0&0&1&0&-{w}&0&0
\\\noalign{\medskip}0&0&0&0&-1&0&\ol{z}&0\\\noalign{\medskip}0
&\ol{w}&0&0&0&0&0&-1\end {array} \right]
$$
}
The determinant in this case is
\[
-{\frac {1}{256}}\,zw{{ \left( 1-\ol{z} \right)^2  \left( 1-\ol{w} \right)^2 }}.
\]

On the other hand for this symmetric quadrilateral framework $\G_{\bZ^2}$ the formalism above
can be applied to a smaller unit cell with the simple two-edged motif
of Figure 18.
\begin{center}
\begin{figure}[h]
\centering
\includegraphics[width=6cm]{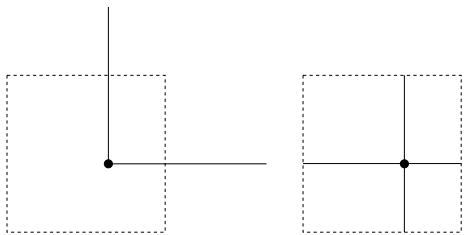}
\caption{A motif and unit cell for $\G_{\bZ^2}$.}
\end{figure}
\end{center}
Note  that the index $s$ for the recipe
takes the value $1$ only and  both motif edges have vertices with the same
$s$ index. It follows that the rigidity operator of $\G_{\bZ^2}$ associated with the motif  is unitarily equivalent to the multiplication operator on the Hilbert space
$ \bC^2 \otimes L^2(\bT^2) $
determined by the matricial symbol  function
$$\Psi(z,w) =\begin{bmatrix}
\ol{z}-1&0\\
0& \ol{w}-1
\end{bmatrix}.
$$

One can similarly verify the following.

\begin{prop}(i)
The matricial symbol function of the kagome framework
determined by the six-edge motif is
given by
\[
\Phi_{kag}({z},{w}) =
{\frac{1}{4}
\left[ \begin {array}{cccccc} -2&0&2&0&0&0\\
\noalign{\medskip}0&0&1&-\sqrt{3}&-1&\sqrt {3}\\
\noalign{\medskip}-1&-\,\sqrt {3}&0&0&1&\sqrt{3}\\
\noalign{\medskip}2&0&-2\,{z}&0&0&0\\
\noalign{\medskip}0&0&-1&\sqrt {3}&\ol{z}{w}&-\sqrt{3}\ol{z}{w}\\
\noalign{\medskip}\ol{w}&\sqrt{3}\ol{w}&0&0&-1&-\sqrt {3}\end {array}
\right]}
\]

(ii)
The determinant of $\Phi_{kag}(z,w)$ is a multiple of
\[
zw(\ol{z}-1)(\ol{w}-1)(\ol{z}-\ol{w})
\]

(iii) The kagome framework is square-summably isostatic.
\end{prop}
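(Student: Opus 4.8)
The plan is to prove the three assertions in turn, following the template of the preceding proposition for the quadrilateral grid. For (i) I would apply the motif-to-matrix recipe of Theorem 5.1 to the six-edge motif of Figure 16. Take the three motif vertices $p^1,p^2,p^3$ at the corners of an equilateral triangle in the unit cell, so that the motif has three interior edges $[p^1,p^2]$, $[p^2,p^3]$, $[p^3,p^1]$ and three external edges joining $p^1,p^2,p^3$ to appropriate translates in neighbouring cells. The interior edges contribute three rows of constant functions whose entries are the coordinate differences of the triangle's vertices, equal (after the common normalisation) to the vectors $(\pm 2,0)$ and $(\pm 1,\pm\sqrt 3)$. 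Each external edge contributes one further row whose entries in the columns of the external vertex are the corresponding motif-rigidity entries multiplied by the monomial $\ol{z}^i\ol{w}^j$ determined by the shift index $(i,j)$ of the cell it reaches; reading off the three shift vectors from the motif produces exactly the monomials $z$, $\ol{z}w$, $\ol{w}$ appearing in the displayed matrix. Assembling the six rows yields $\Phi_{kag}(z,w)$; this step is routine once the motif geometry is fixed.

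For (ii) the task is to evaluate a $6\times 6$ determinant of trigonometric polynomials, and I would do this by extracting the three linear factors via elementary row operations that leave the determinant unchanged. Replacing row $4$ by (row $4$) $+$ (row $1$) produces a row equal to $2(1-z)$ times a coordinate vector, pulling out the factor $1-z=z(\ol z-1)$; replacing row $6$ by (row $6$) $+$ (row $3$) pulls out $(\ol w-1)$; and replacing row $5$ by (row $5$) $+$ (row $2$) pulls out $\ol z w-1=w(\ol z-\ol w)$. The residual matrix has constant entries, and a short cofactor expansion shows its determinant is a nonzero constant; hence $\det\Phi_{kag}(z,w)$ is a nonzero scalar multiple of $zw(\ol z-1)(\ol w-1)(\ol z-\ol w)$. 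I expect this determinant factorisation — in particular the bookkeeping that converts $1-z$ and $\ol z w-1$ into the stated form and verifies that the leftover determinant does not vanish — to be the main point requiring care.

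For (iii) I would invoke Theorem 5.2(ii). Since $\Phi_{kag}(z,w)$ is a square $6\times 6$ matrix, its column rank and its row rank are both maximal at a point exactly when $\det\Phi_{kag}(z,w)\neq 0$. By (ii) this determinant is a nonzero trigonometric polynomial, so its zero set in $\bT^2$ is a finite union of circles and therefore has measure zero. Hence both ranks are maximal for almost every $(z,w)\in\bT^2$, and Theorem 5.2(ii) gives that the kagome framework is square-summably rigid and square-summably stress-free, that is, square-summably isostatic. One may add that, the motif being in Maxwell counting equilibrium, the corollary above then also rules out any nonzero internal or summable infinitesimal flex.
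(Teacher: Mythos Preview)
Your proposal is correct and follows essentially the same approach as the paper, which in fact offers no explicit proof but merely says ``One can similarly verify the following,'' deferring to the template of Proposition~5.5 (apply the motif-to-matrix recipe of Theorem~5.1, then compute the determinant, then invoke Theorem~5.2). Your row-operation extraction of the three factors is a clean way to carry out the determinant computation that the paper leaves implicit; after your three replacements the residual $6\times 6$ matrix is constant, and a short cofactor expansion (e.g.\ along the new row~4, then row~1, then row~6) gives a nonzero value, confirming that the scalar multiple in~(ii) is indeed nonzero.
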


We remark that the function matrix association above
has also arisen in engineering in the analysis of Hutchinson and Fleck \cite{hut-fle}
of the stresses and rigidity of  the kagome repetitive truss framework. This  is derived from the crystallographic perspective of  wave periodic flexes and Bloch's theorem.



\subsection{Symmetry equations for infinite frameworks.}
Let $\C$ be a crystal framework in $\bR^d$ with complex Hilbert space rigidity
operator $R$. Also let $\K_v^2, \K^2_e$ be as before, let $\K^2_{fl}$ be the space of square-summable infinitesimal flexes, let $\K^2_{str}$ be the space of square-summable infinitesimal stresses
and let $\M=  \K_v^2 \ominus \K^2_{fl}$, $\N=  \K_e^2 \ominus \K^2_{str}$
be their complementary spaces.

As well as commuting with the coordinate shift operators
the rigidity operator satisfies
commutation relations for every isometric symmetry of $\C$.
For example, let $S$ be an isometric (not necessarily linear) operator on $\bR^d$ which effects a symmetry of $\C$.
There is an induced unitary operator $S_e$ on the complex Hilbert
space $\ell^2(E)$ and an analogous operator $S_v$
on $\ell^2(V)$ which permute the standard basis elements. Write
$\tilde{S}_v$ for the isometric (not necessarily linear)
operator $ S_v \otimes S$  on $\H_v = \ell^2(V) \otimes \bC^d$
where $S$ is also viewed as an isometric operator on $\bC^d$.
Then we have the fundamental symmetry equation
$S_eR = R\tilde{S}_v$. (See \cite{owe-pow-2}.)

Moreover, if $\G$ is the symmetry group of $\C$ arising from
isometric transformations of $\bR^d$ and if  $\rho_e$ and $\tilde{\rho}_v$ are the associated
representations of $\G$ on $\K_e^2$ and $\K_v^2$ then we have the symmetry equations
\[
\rho_e(g)R = R\tilde{\rho}_v(g) \mbox{   for   } g \in \G.
\]
We have shown in \cite{owe-pow-2} how such symmetry equations may be used to
obtain a simple proof of a unitary equivalence which implies the Fowler-Guest formula \cite{fow-gue}
together with various generalisations. In the present setting we have
the following analogue which also leads to counting conditions for isostatic
and rigid frameworks.

\begin{thm}
Let $\C$ be a crystal framework with isometry
symmetry group $\G$ and let
\[
\tilde{\rho}_v = \rho_\M \oplus \rho_{fl}
\]
\[
\rho_e = \rho_\N \oplus \rho_{str}
\]
be the decompositions associated with the spaces of
square-summable infinitesimal flexes
and stresses. Then $\rho_\M$ and $\rho_\N $ are unitarily equivalent
representations. In particular if $\C$ is square summably isostatic
then $\tilde{\rho}_v $ and $\rho_e$ are unitarily equivalent.
\end{thm}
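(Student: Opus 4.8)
The idea is to produce the claimed unitary equivalence explicitly from the polar decomposition of the rigidity operator, after observing that this polar decomposition is compatible with the symmetry group $\G$.

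First I would set up the functional-analytic framework. Since $\C$ has a finite motif it is distance-regular and locally finite, so by the discussion in Section 3 the rigidity matrix defines a bounded operator $R : \K_v^2 \to \K_e^2$, with $\K_{fl}^2 = \ker R$ and $\K_{str}^2 = \ker R^*$ (the rigidity matrix having real entries, so $R^* = R^t$ as matrices). Consequently $\M = \K_v^2 \ominus \K_{fl}^2 = (\ker R)^\perp = \overline{\operatorname{ran} R^*}$ and $\N = \K_e^2 \ominus \K_{str}^2 = (\ker R^*)^\perp = \overline{\operatorname{ran} R}$. The symmetry equations of \cite{owe-pow-2}, namely $\rho_e(g)R = R\tilde{\rho}_v(g)$ for $g \in \G$, together with their Hilbert space adjoints $R^*\rho_e(g) = \tilde{\rho}_v(g)R^*$ (using that the representations are unitary, so $\rho_e(g)^* = \rho_e(g^{-1})$ and likewise for $\tilde{\rho}_v$), show at once that $\ker R$ is $\tilde{\rho}_v$-invariant and $\ker R^*$ is $\rho_e$-invariant. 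Hence so are $\M$ and $\N$, which is what makes $\rho_\M = \tilde{\rho}_v|_\M$ and $\rho_\N = \rho_e|_\N$ well-defined subrepresentations, and the projections $P_\M$, $P_\N$ commute with the respective representations. Moreover $\tilde{\rho}_v(g)$ commutes with $R^*R = |R|^2$, since $R^*R\tilde{\rho}_v(g) = R^*\rho_e(g)R = \tilde{\rho}_v(g)R^*R$, and therefore with the positive square root $|R| = (R^*R)^{1/2}$.

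The heart of the argument is to show that the partial isometry $V$ in the polar decomposition $R = V|R|$ — which has $\ker V = \ker|R| = \ker R$, initial space $\M$ and final space $\N$, hence restricts to a unitary $V|_\M : \M \to \N$ — intertwines the two representations. For fixed $g$ I would put $V_g := \rho_e(g)V\tilde{\rho}_v(g)^*$ and compute $V_g|R| = \rho_e(g)V|R|\tilde{\rho}_v(g)^* = \rho_e(g)R\tilde{\rho}_v(g)^* = R$, using that $\tilde{\rho}_v(g)^*$ commutes with $|R|$ and the symmetry equation; and $V_g^*V_g = \tilde{\rho}_v(g)(V^*V)\tilde{\rho}_v(g)^* = \tilde{\rho}_v(g)P_\M\tilde{\rho}_v(g)^* = P_\M$, so that $V_g$ is a partial isometry with the same initial projection, hence the same kernel $\ker|R|$, as $V$. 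Uniqueness of the polar decomposition then forces $V_g = V$, i.e. $\rho_e(g)V = V\tilde{\rho}_v(g)$ for all $g$. Restricting to $\M$ and using $\rho_e$-invariance of $\N$ gives that $V|_\M$ is a unitary equivalence $\rho_\M \cong \rho_\N$. For the last assertion, square-summable isostaticity means $\K_{fl}^2 = \{0\}$ and $\K_{str}^2 = \{0\}$, so $\M = \K_v^2$ and $\N = \K_e^2$, whence $\tilde{\rho}_v = \rho_\M \cong \rho_\N = \rho_e$.

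I do not anticipate a genuine obstacle: modulo the symmetry equations already established in \cite{owe-pow-2}, this is the standard principle that a polar decomposition inherits any symmetry of the operator. The points demanding care are purely technical — verifying boundedness of $R$ on the $\ell^2$ spaces, checking that passage to the positive square root $|R|$ preserves the commutation with $\tilde{\rho}_v(g)$, and noting that because $\operatorname{ran} R$ and $\operatorname{ran} R^*$ need only be dense, $V|_\M$ is genuinely unitary from $\M$ onto $\N$ (its range being all of $\N = \overline{\operatorname{ran} R}$) rather than merely an isometry onto a proper subspace.
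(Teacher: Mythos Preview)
Your proposal is correct and follows essentially the same route as the paper: both use the polar decomposition $R = V|R|$ and observe that the partial isometry $V$ intertwines the representations, restricting to a unitary $\M \to \N$. The paper is terser --- it merely says ``it is a standard verification that this unitary also intertwines the representations'' --- whereas you spell out the key computation via the uniqueness of the polar decomposition, which is exactly the standard verification intended.
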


\begin{proof}
In the square-summably isostatic case  $R$ is a bounded operator
with trivial kernel and trivial cokernel. The partially isometric
part $U$ of the polar decomposition $R = U(R^*R)^{1/2}$ is therefore unitary and
it is a standard verification that this unitary also intertwines the
representations.
In general the symmetry equations show that
the space $\K^2_{fl}$ is reducing for $\tilde{\rho}_v $ and
$\K^2_{str}$ is reducing for $\rho_e$
 and so the asserted
direct sum decompositions do exist. Now the restriction of $R$ to $\M$
maps to $\N$ with trivial kernel and cokernel and so as before the restriction
representations are unitarily equivalent.
\end{proof}


\subsection{From matrix function to wave modes}
 For a crystal framework
$\C = (F_v, F_e, \bZ^d )$ with a given motif
let $\Phi: \bT^d \to M_{m,nd}$ be the associated   matricial symbol function.
\begin{definition}
The mode multiplicity function of $\C$ associated with the given motif and translation group is the function $\mu: \bT^d \to \bZ_+$
given by
$\mu(z) = \dim \ker \Phi (z).$
\end{definition}

For $d=2$ we also consider $\mu$ as being parametrised by coordinates
$s,t$ in $[0,1)\times [0,1)$ so that $(z,w)\in \bT^2$ corresponds to $(e^{2\pi is},
e^{2\pi it})$.
From the determinant calculations above we obtain the following.

\begin{prop}
(i) For the grid framework $\G_{\bZ^2}$ and the 8-edged motif the mode multiplicity
function has values $\mu(0,0)=2$,  $$\mu(s,0) = \mu(0,t) = 1,$$ if $s$ and
$t$ are nonzero, and is zero otherwise.

(ii) For the kagome framework $\G_{kag}$ and the 6-edged motif
the mode multiplicity
function has values  $\mu(0,0)= 2,$ at the origin while $\mu(s,s)=\mu(s,0) = \mu(0,t) = 1$ if $s\neq 0$ and
$t\neq $ and is zero otherwise.
\end{prop}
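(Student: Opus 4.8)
The plan is to read the mode multiplicity function off the determinant factorisations already in hand, filling in the behaviour on the zero set of the determinant by small explicit rank computations. On the torus $\bT^2$ the monomial prefactors $z$ and $w$ never vanish, so for $\G_{\bZ^2}$ the zero set of $\det\Phi$ inside $\bT^2$ is exactly $\{z=1\}\cup\{w=1\}$, which in the parametrisation $z=e^{2\pi is}$, $w=e^{2\pi it}$ is the union of the lines $\{s=0\}$ and $\{t=0\}$; for $\G_{kag}$ the zero set of $\det\Phi_{kag}$ is $\{z=1\}\cup\{w=1\}\cup\{z=w\}$, i.e. the three lines $\{s=0\}$, $\{t=0\}$, $\{s=t\}$. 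Off these lines $\Phi(z)$ is square with nonzero determinant, hence invertible, so $\mu(z)=\dim\ker\Phi(z)=0$; this disposes of the ``zero otherwise'' clauses. I would also note that the pairwise intersections of the component lines all reduce to the single point $z=w=1$, i.e. the origin $(s,t)=(0,0)$, so that away from the origin each point of the zero set lies on exactly one component.

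Next I would handle a generic point of a single component line using the elementary order-of-vanishing bound: if $\Phi(z_0)$ has corank $k$ then the restriction of $\det\Phi$ to any line meeting the zero set transversally at $z_0$ vanishes to order at least $k$ there --- put $\Phi(z_0)$ in block form with an invertible leading corner and expand by the Schur complement, whose trailing $k\times k$ block is $O(\|z-z_0\|)$ entrywise and so has determinant $O(\|z-z_0\|^k)$. At a non-origin point of the zero set only one of the linear factors $(1-\bar z)$, $(1-\bar w)$, $(\bar z-\bar w)$ is active; where it enters to first order this already forces corank at most $1$, hence, since $\det\Phi(z_0)=0$, corank exactly $1$ and $\mu=1$, while where it enters to higher power one instead imposes the defining relation ($z=1$, $w=1$, or $z=w$) in the explicit matrix and settles the corank by a short row reduction. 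Carried out, this yields $\mu(s,0)=\mu(0,t)=1$ for $s,t\ne 0$ in both cases, and the additional value $\mu(s,s)=1$ for $s\ne 0$ in the kagome case.

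The remaining case is the origin $z=w=1$, where two or more components of the zero set coalesce and the order-of-vanishing estimate now only bounds the corank from above. Here I would evaluate $\Phi(1,1)$ (respectively $\Phi_{kag}(1,1)$) directly --- each is a small matrix in which the rows coming from the wrapped motif edges degenerate into combinations of the rows coming from the interior edges, making the rank visible by inspection --- and read off $\mu(0,0)$, cross-checking against the span of the $1$-cell periodic infinitesimal flexes, which always lie in $\ker R(G,p)$ and descend to $\ker\Phi(1,1)$, the two translations supplying the basic lower bound. I expect this rank evaluation at the origin to be the crux: once several strata of the zero set meet, the determinant no longer detects the corank, so there is no way around computing with the explicit symbol matrix there. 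Everything else is bookkeeping about which trigonometric factor vanishes on which piece of $\bT^2$ and the dictionary between $(z,w)$ and $(s,t)$.
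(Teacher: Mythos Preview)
Your plan is considerably more careful than what the paper actually does: the paper's entire argument is the single sentence ``From the determinant calculations above we obtain the following'', i.e.\ it simply reads the zero set of $\det\Phi$ off the factorisations and asserts the multiplicities without any rank analysis. Your recognition that the determinant only locates the support of $\mu$ and that the actual corank must be checked --- by the order-of-vanishing bound on the simple strata and by direct evaluation at the origin --- is exactly the right instinct, and the Schur-complement argument for the corank bound is a clean way to handle the generic point of a single factor.

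There is, however, a genuine gap: you assert the outcomes of the explicit rank computations without performing them, and if you carry them out you will not recover the stated values. For the $8$-edged grid motif, setting $w=1$ in the displayed $8\times8$ matrix makes rows $2,6$ and rows $4,8$ into negatives of one another, so the rank drops to $6$ and $\mu(s,0)=2$, not $1$; at $z=w=1$ all four of the external-edge rows become negatives of the internal-edge rows, the rank is $4$, and $\mu(0,0)=4$. (These values are the ones consistent with the determinant $(1-\bar z)^2(1-\bar w)^2$ and with the obvious $2\times2$-supercell periodic shear flexes of the square grid.) The stated values $\mu(0,0)=2$, $\mu(s,0)=\mu(0,t)=1$ match the $2$-edged single-vertex motif with symbol $\Psi(z,w)=\mathrm{diag}(\bar z-1,\bar w-1)$, not the $8$-edged one. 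Likewise, for the kagome matrix at $z=w=1$ rows $4,5,6$ are the negatives of rows $1,2,3$, giving rank $3$ and $\mu(0,0)=3$; the extra kernel vector beyond the two translations is the $1$-cell-periodic alternating rotation mode of the triangles, which the paper itself discusses elsewhere. On the single lines $\{z=1\}$, $\{w=1\}$, $\{z=w\}$ away from the origin your first-order argument does give $\mu=1$ for kagome, and a short row reduction confirms it.

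So: keep your strategy, but actually do the evaluations at $\Phi(1,1)$ and on the squared-factor lines rather than quoting the target numbers --- you will then be in a position to correct the statement rather than reproduce it.
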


Consider now the \textit{wave flexes} of $\C$ which we define as the
infinitesimal flexes which are $1$-cell-periodic modulo a phase factor.

\begin{definition} Let $\C = (F_v, E_v, \bZ^d)$ be a crystal framework in $\bR^d$.
An (infinitesimal)  wave flex of $\C$ is a complex infinitesimal flex $u= (u_v)_{v\in V}$
which is wave periodic (or more precisely $1$-cell wave periodic) in the sense
that  for some vector $q$ in $\bR^d$
\[
u_{\kappa+n}=e^{2\pi i\langle q,n\rangle}u_\kappa
\]
for each vertex $\kappa$ in the motif set $F_v$ and each $d$-tuple
$n=(n_1,\dots ,n_d)$.
\end{definition}


The values of the mode multiplicity function corresponds to
the dimension of the spaces of wave flexes. To see this
suppose  that $\Phi(z)$ is the matricial symbol function
 for $\C =(G,p) = (F_v, F_e, \bZ^3)$, that $w \in \bT^3$ and that $\det \Phi(w)=0$.
Then $\Phi(w)u_m=0$ for some nonzero complex motif vector
$u_m$. For the Dirac delta function $\delta_w(z)$ on $\bT^3$ we have, informally,
$\Phi(z)F(z)=0$ for all $z$ on the $3$-torus where $F(z)$ is the function  $\delta_w(z)u_m$.
Thus, taking Fourier transforms it follows  that
the  wave periodic vector $u=\F(F)$ in $\K_v$
satisfies $R(G,p)u=0$. This shows that the bounded (and non square-summable)
vector
\[
u = (u_{\kappa,n})_{\kappa \in F_v,n\in\bZ^3} = \F(F)(\kappa,n)
= w^nu_\kappa
\]
is a wave  flex.
The Dirac delta argument can be rigourised in the usual manner and so we  obtain the following theorem.

\begin{thm} Let  $\C = (F_v, F_v, \bZ^d)$ be a crystal framework in $\bR^d$
with associated symbol function $\Phi(z)$ and suppose that $\C$ has no internal (finitely supported) self-stresses and no internal (finitely supported)
flexes. Then

(i) $d|F_v| = |F_e|$, the associated matricial symbol function
$\Phi(z)$ on the $d$-torus is square with full rank almost everywhere
and $\C$ is square-summably isostatic.

(ii) infinitesimal
wave flexes (that are 1-cell periodic-modulo-phase) exist, with phase factor
$w \in \bT^d$,
if and only if $~\det \Phi(w)=0$. In this case the dimension of the corresponding
space of wave flexes is $\dim \ker \Phi(w)$.
\end{thm}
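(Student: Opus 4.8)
The plan is to read (i) off from the rank criteria of Theorem 5.2, i.e. the unitary equivalence $\F_e^{-1}R\F_v = M_\Phi$ together with its square-summable rigidity and stress-freeness consequences, after translating the two hypotheses into statements about trigonometric polynomial null vectors of $\Phi$; and to read (ii) off from the fact that $R(G,p)$, restricted to the finite dimensional space of $1$-cell wave-periodic vectors of a fixed phase $w$, is carried by the motif identification to multiplication by the single scalar matrix $\Phi(w)$.

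For (i): under $\F_v$ a finitely supported vector $u\in\K_v^{00}$ corresponds to a $\bC^{nd}$-valued trigonometric polynomial $f(z)$, and $Ru=0$ if and only if $M_\Phi f=0$, i.e. $\Phi(z)f(z)=0$ for a.e.\ $z$, hence, by continuity of $\Phi$, for every $z\in\bT^d$. Thus the absence of internal flexes says exactly that the $nd$ columns of $\Phi$ are linearly independent over the field of fractions of $\bC[z_1^{\pm 1},\dots ,z_d^{\pm 1}]$: a dependence over that field produces, on clearing denominators, a nonzero polynomial null vector (hence an internal flex), while, as in the proof of Corollary 5.3, such a dependence is \emph{forced} once the generic rank is deficient, since every $nd\times nd$ minor of $\Phi$ is a trigonometric polynomial and so a minor vanishing on a set of positive measure vanishes identically. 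Column independence over the fraction field forces $|F_e|=m\ge nd=d|F_v|$ and forces $\Phi(z)$ to have column rank $nd$ for almost every $z$. Running the same argument on the rows (equivalently on the cokernel side, where finitely supported self-stresses correspond to polynomial null vectors of the transposed symbol) shows that the absence of internal self-stresses forces $d|F_v|\ge|F_e|$ and full row rank almost everywhere. Hence $d|F_v|=|F_e|$, the matrix $\Phi(z)$ is square, and it has full rank for almost every $z$; Theorem 5.2 then gives that $\C$ is both square-summably rigid and square-summably stress-free, i.e. square-summably isostatic.

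For (ii): since the translates $T_g(F_v)$, $g\in\D$, partition the vertex set, a $1$-cell wave-periodic vector of phase $w\in\bT^d$ is uniquely determined by its motif part $u_m=(u_\kappa)_{\kappa\in F_v}\in\bC^{nd}$ via $u_{\kappa+n}=w^nu_\kappa$, and every $u_m$ extends in this way. Because $G$ is locally finite, $R(G,p)$ is a genuine linear map on the full product space $\prod_V\bC^d$, and it is translation invariant; substituting such a vector into the row of $R(G,p)$ indexed by any translate of a motif edge and factoring out the common phase shows, precisely by the Fourier-coefficient recipe of Theorem 5.1 (the monomials there reappearing as the phase factors carried by the non-base endpoints of the edges), that $R(G,p)u=0$ if and only if $\Phi(w)u_m=0$. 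This is the rigorous content of the Dirac-delta computation preceding the theorem. Hence $u\mapsto u_m$ is a linear isomorphism from the space of wave flexes of phase $w$ onto $\ker\Phi(w)$; a nonzero such flex exists iff $\ker\Phi(w)\ne\{0\}$, which, $\Phi(w)$ being square by part (i), is equivalent to $\det\Phi(w)=0$, and in that case the dimension of the wave-flex space equals $\dim\ker\Phi(w)$.

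The step that will demand the most care is the identification in (ii) of $R(G,p)$ restricted to wave-periodic vectors of phase $w$ with multiplication by the constant matrix $\Phi(w)$: the operator $M_\Phi$ is defined only on $L^2(\bT^d)$-sections, whereas wave-periodic vectors are merely bounded, so one cannot literally ``evaluate $M_\Phi$ at $w$''. The honest route is the entrywise computation above, and the point to watch is the bookkeeping of which endpoint of a translated motif edge sits in which cell, so that the exponents of $w$ produced by wave-periodicity match the Fourier coefficients defining $\Phi$ (up to the harmless conjugation convention in the Fourier transform, which affects neither the vanishing of $\det\Phi$ nor the kernel dimension, since $\Phi$ has real-coefficient monomial entries). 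The remaining ingredients, namely the measure-theoretic minor argument in (i) and the dictionary between finitely supported vectors and trigonometric polynomials under $\F_v$ and $\F_e$, are routine.
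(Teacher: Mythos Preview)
Your argument is correct and is in fact considerably more complete than the paper's own treatment: the paper offers only the Dirac-delta heuristic before the theorem statement and the phrase ``can be rigourised in the usual manner'', with no explicit derivation of (i) at all. Your direct entrywise verification that $R(G,p)$ restricted to phase-$w$ wave-periodic vectors is represented by the scalar matrix $\Phi(w)$ (up to the harmless conjugation you flag) is exactly the rigourisation the paper alludes to for (ii), and your column/row independence argument over the Laurent fraction field, extracting the Maxwell count $d|F_v|=|F_e|$ from the two ``no internal'' hypotheses before invoking Theorem~5.2, supplies what the paper leaves implicit for (i).

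One small imprecision: the entries of $\Phi$ need not be monomials (reflexive motif edges contribute binomials of the form $(1-\bar z^{\delta})$ times a coordinate difference), but your conjugation remark only requires that all Fourier coefficients be real, which they are, so the conclusion stands unchanged.
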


It follows also from this that the zero set of the determinant of the matricial symbol function associated with the motif of an appropriate  supercell
coincides with the phases of supercell-periodic wave flexes.

Of particular computational and theoretical interest
is what one might refer to as the wave flex acquisition when a crystal framework deforms under a colossal flex to a framework with higher symmetry. This phenomenon serves as a model for the appearance of so-called Rigid Unit Modes (RUMs) in higher symmetry phases
of various material  crystals.

The following theorem generalises an interesting result
of Wegner \cite{weg} for tetrahedral crystals.
It may be viewed as an expression of the simple principle that additional
symmetry often entails additional flexibility.
Our proof applies to arbitrary crystal frameworks and is quite direct,
benefitting somewhat from the economy of operator theory formalism.
We remark that there are also  natural operator algebra perspectives that are
relevant to symmetry considerations. The result shows in particular for $d=3$ that the RUM set is typically
a union of surfaces, being the zero set of a single real-valued polynomial,
rather than the intersection of the zero sets of the real and imaginary
part of a complex polynomial.

For a given crystal framework and motif
write $\Omega$ for the subset of   $\bT^d$ formed by the  phases $\omega$ of the $1$-cell periodic wave flexes. We also refer to this informally as the RUM set or the RUMs of $\C$.

\begin{thm}
Let  $\C = (F_v, F_e, \bZ^d)$ be a crystal framework in $\bR^d$
with $d|F_v|=|F_e|$
and suppose that $\C$ possesses inversion symmetry.
Then the set $\Omega$
has the form
\[
\Omega = \bT^d \cap V(p)
\]
where $V(p)$ is the zero set of a complex polynomial $p(z_1,\dots ,z_d,\ol{z}_1 \dots , \ol{z}_d)$ which is real-valued on $\bT^d$.
\end{thm}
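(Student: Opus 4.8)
Fix a motif, put $m=|F_e|=d|F_v|$ and let $\Phi\colon\bT^d\to M_{m,m}(\bC)$ be the resulting matricial symbol function, so that $\F_e^{-1}R\F_v=M_\Phi$. Since $\Phi(w)$ is square, the identification of wave flexes with $\ker\Phi(w)$ in the Dirac delta argument above shows $\Omega=\{w\in\bT^d:\det\Phi(w)=0\}$. Writing $D(z):=\det\Phi(z)$, a trigonometric (Laurent) polynomial, the theorem thus reduces to producing a polynomial $p$ in $z_1,\dots,z_d,\ol{z}_1,\dots,\ol{z}_d$ that is real-valued on $\bT^d$ and satisfies $\{p=0\}\cap\bT^d=\{D=0\}\cap\bT^d$. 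The plan is to extract such a $p$ from a functional equation for $D$ forced by inversion symmetry.

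First I would record two relations. (1) Since $R(G,p)$ is a real matrix and the Fourier coefficients of the entries $\phi_{k,l}$ of $\Phi$ are entries of $R(G,p)$ (by the motif-to-matrix recipe), each $\phi_{k,l}$, hence also $D$, has real Fourier coefficients; so $\ol{D(z)}=D(z_1^{-1},\dots,z_d^{-1})$ on $\bT^d$. (2) A global translation leaves $\Phi$ (coordinate differences) and $\Omega$ unchanged, so I may assume the inversion symmetry is $S\colon x\mapsto -x$. As $S$ carries the cell indexed by $n$ to the cell indexed by $-n$ up to a fixed per-orbit shift, the Fourier transforms $\F_e^{-1}S_e\F_e$ and $\F_v^{-1}\tilde{S}_v\F_v$ of the symmetry operators have the form $M_{D_e}\iota$ and $M_{D_v}\iota$, where $\iota$ is the involution $F(z)\mapsto F(z_1^{-1},\dots,z_d^{-1})$ and $D_e,D_v$ are generalized permutation (monomial) matrix functions. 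Substituting into the fundamental symmetry equation $S_eR=R\tilde{S}_v$ and cancelling $\iota$ gives the identity $D_e(z)\,\Phi(z_1^{-1},\dots,z_d^{-1})=\Phi(z)\,D_v(z)$ on $\bT^d$; taking determinants, and using that $\det D_e$ and $\det D_v$ are signed monomials, I obtain $D(z_1^{-1},\dots,z_d^{-1})=c\,z^{k}D(z)$ for some $c\in\{1,-1\}$ and $k\in\bZ^d$. Together with (1) this is the functional equation $\ol{D(z)}=c\,z^{k}D(z)$ on $\bT^d$.

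Finally I would take $p(z):=z^{k}D(z)^2$: this is a Laurent polynomial, hence a polynomial in $z_i,\ol{z}_i$ on $\bT^d$, and $\ol{p(z)}=z^{-k}\ol{D(z)}^{\,2}=z^{-k}(c\,z^{k}D(z))^2=z^{k}D(z)^2=p(z)$, so $p$ is real-valued on $\bT^d$ and $\{p=0\}\cap\bT^d=\{D=0\}\cap\bT^d=\Omega$, which is the assertion. As an addendum toward the ``union of surfaces'' remark preceding the statement, when $k$ can be chosen in $(2\bZ)^d$ — which one arranges by passing to a suitably doubled supercell, whose symbol determinant vanishes on a reparametrisation of the same RUM set — the sharper choice $p(z)=\gamma z^{k/2}D(z)$ with $\gamma^2=c$ is likewise real-valued on $\bT^d$, has nonvanishing gradient wherever $D$ does, and thus displays $\Omega$ for $d=3$ as a union of surfaces, recovering Wegner's picture. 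I expect the main obstacle to be the middle paragraph: verifying carefully that inversion Fourier-transforms to a monomial-matrix multiplication precomposed with $\iota$ (which needs the explicit motif-to-symbol recipe together with the cell-reversal property of inversion), and that $\det D_e,\det D_v$ are indeed signed monomials; the parity reduction via a supercell, needed only for the transversal form of $p$, is a secondary technicality.
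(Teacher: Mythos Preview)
Your proof is correct and follows the same core strategy as the paper: Fourier-transform the symmetry equation $S_eR=R\tilde{S}_v$, identify the transformed inversion operators as monomial-permutation matrices composed with the torus involution $\iota$, and deduce the functional equation $\ol{D(z)}=c\,z^{k}D(z)$ for $D=\det\Phi$. The paper carries out exactly this derivation (first assuming $\sigma(F_e)=F_e$, then handling the general case by inserting a diagonal monomial factor), so your middle paragraph matches theirs in substance.

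The only genuine difference is the endgame. The paper writes $D=F_1+iF_2$ and uses the functional equation to show that, pointwise on $\bT^d$, $F_1(z)=0$ if and only if $F_2(z)=0$; the real-valued polynomial is then essentially $F_1$ (or $iD$ in the purely imaginary case), which has the same degree as $D$. Your choice $p(z)=z^kD(z)^2$ is cleaner and avoids the case analysis entirely, at the cost of doubling the degree and the order of vanishing. For the theorem as stated this is immaterial, and your addendum correctly observes that the squaring can be undone by passing to a doubled supercell to force $k\in(2\bZ)^d$, which is the step one would want for the transversal ``union of surfaces'' picture the paper alludes to; the paper does not make this parity point explicit.

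One small remark: in verifying that $\F_v^{-1}\tilde{S}_v\F_v$ has the monomial-matrix form, do not forget that $\tilde{S}_v=S_v\otimes S$ includes the spatial inversion $S=-I_d$ on the $\bC^d$ factor, contributing an extra sign $(-1)^{d|F_v|}$ to $\det D_v$. This is absorbed into your constant $c\in\{\pm1\}$ and does not affect the argument, but it is worth recording when you flesh out that paragraph.
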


\begin{proof}
Let us denote the set of framework vertices and edges as
\[
\V = \{\kappa + n: \kappa \in F_v, n \in \bZ^d\}
\]
\[
\E = \{e + n: e \in F_v, n \in \bZ^d\}
\]
Effecting a translation, if necessary,
we may assume that the inversion is $\sigma : x \to -x$, that the unit cell has
inversion symmetry and $\sigma(F_v) = F_v$. It may or may not be possible to
 re-choose the noninternal edges
of $F_e$ so that that $\sigma(F_e) = F_e$.
Suppose first that this is the case. We show that $\det \Phi(z)$ itself is either real-valued or purely imaginary, from which the stated form for  $\Omega$ follows.

With the notation of Section 5.4 we have the symmetry equation
\[
\rho_e(\sigma)R = R\tilde{\rho}_v(\sigma).
\]
Recall that $\rho_e(\sigma)$ and $\tilde{\rho}_v(\sigma)$
are the isometric operators induced by the (isometric) symmetry element
$\sigma$. Taking Fourier transforms this equation takes the form
\[
U(\sigma)M_{\Phi(z)} = M_{\Phi(z)}V(\sigma)
\]
where $U(\sigma)$ and $V(\sigma)$ are the unitary operators determined by their
action on the distinguished orthonormal bases. In view of the assumption we have
\[
U(\sigma)(\eta_k \otimes z^l) = \eta_{\sigma_e(k)} \otimes z^{-l}, \quad  k = 1,\dots , |F_e|
\]
(for certain induced permutations $\sigma_e, \sigma_v$) and
\[
V(\sigma)(\xi_k^{x_i} \otimes z^l) = \xi_{\sigma_v(k)}^{x_i} \otimes z^{-l}, \quad  k = 1,\dots , |F_v|, i=1,\dots , d.
\]
That is, these operators have the form
\[
U(\sigma) = E_\sigma \otimes J, \quad  V(\sigma) = V_\sigma \otimes J
\]
where $J$ is the inversion unitary operator on $L^2(\bT^d)$ given by $(Jf)(z) = f(\ol{z})$,
and where $E_\sigma$ and $V_\sigma$ are scalar permutation matrices induced
by $\sigma$.
Substituting these forms
we see that
\[
\Phi(z) = (E_\sigma \otimes J)^{-1}\Phi(z)(V_\sigma \otimes J)
= E_\sigma^{-1}\Phi(\ol{z})V_\sigma .
\]
It follows that
\[
\det \Phi(z) = \det \Phi(\ol{z})\det(E_\sigma)^{-1}\det(V_\sigma) = (-1)^\tau\ol{\det(\Phi(z))}
\]
for some integer $\tau$, since the determinant polynomial has real coefficients and so the determinant is either real or purely imaginary.

In the general case because of edges lying on inversion axes (lines through the origin) $\sigma$ cannot be made to act freely on the motif edges. However
for each  motif edge index $k$ there is a monomial shift factor $z^{p(k)}$
such that with $D$ the diagonal matrix function
$$D = diag (z^{p(1)}, \dots , z^{p(d)})$$
we have
\[
U(\sigma) =D(E_\sigma \otimes J).
\]
Now we see that $\det \Phi(z) = (-1)^\tau z^p\ol{\det(\Phi(z))}$
for some multi-index $p$.
Write $\Phi = F_1 + iF_2$ with the $F_i$ real-valued and
$(-1)^\tau z^p= e^{i\gamma(z)}$ with $\gamma = \gamma(z)$ real.
Equating real and imaginary
parts it follows that either $\cos \gamma =1$ in which case
$\det \Phi(z) = \ol{\det(\Phi(z))}$ and $\Phi$ is real-valued,
or $F_1(z)=(\sin \gamma)(1-\cos \gamma)^{-1}F_2(z)$. In all cases,
for $z$ on the torus,
 $F_1(z)=0$ if and only if and $F_2(z)=0$, as required.
\end{proof}




\subsection{Honeycomb frameworks and the kagome net.}
 { The explicit formulation of the matricial symbol function makes clear a simple additive principle which is useful for calculation : adding internal edges to a unit cell results in
 adding the same number of
rows to the symbol function to form the new symbol function, and each of these
rows has the standard form with constant entries.

Consider the following honeycomb frameworks based on regular \\ hexagons.

\begin{center}
\begin{figure}[h]
\centering
\includegraphics[width=5cm]{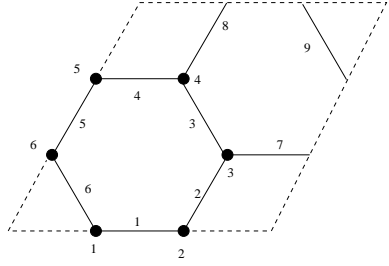}
\caption{A $3$-regular honeycomb framework.}
\end{figure}
\end{center}

\begin{center}
\begin{figure}[h]
\centering
\includegraphics[width=5cm]{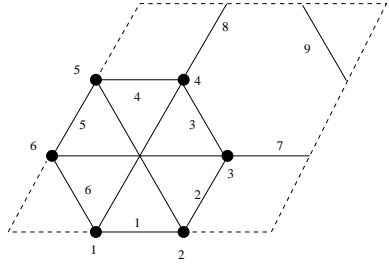}
\caption{A $4$-regular honeycomb framework.}
\end{figure}
\end{center}

The $3$-regular (symmetric) honeycomb framework has a $9$ by $12$ matricial symbol function. There are vectors in the kernel of the multiplication operator providing internal infinitesimal flexes and there are  evident bounded infinitesimal flexes associated with "parallel deformations", for example.

A $4$-regular framework may be constructed by
rigidifying the hexagon internal to the unit cell, by adding three cross edges.
The matricial symbol function is correspondingly enlarged by three
 scalar rows, appearing in rows $9$ to $12$ in the following matrix.
{\tiny
\[
 \left[ \begin {array}{cccccccccccc} -1/3&0&1/3&0&0&0&0&0&0&0&0&0
\\\noalign{\medskip}0&0&-1/6&-1/6\,\sqrt {3}&1/6&1/6\,\sqrt {3}&0&0&0&0
&0&0\\\noalign{\medskip}0&0&0&0&1/6&-1/6\,\sqrt {3}&-1/6&1/6\,\sqrt {3
}&0&0&0&0\\\noalign{\medskip}0&0&0&0&0&0&1/3&0&-1/3&0&0&0
\\\noalign{\medskip}0&0&0&0&0&0&0&0&1/6&1/6\,\sqrt {3}&-1/6&-1/6\,
\sqrt {3}\\\noalign{\medskip}1/6&-1/6\,\sqrt {3}&0&0&0&0&0&0&0&0&-1/6&
1/6\,\sqrt {3}\\\noalign{\medskip}0&0&0&0&-1/3&0&0&0&0&0&1/3\,\ol{z}&0
\\\noalign{\medskip}1/6\,\ol{w}&1/6\,\sqrt {3}\ol{w}&0&0&0&0&-1/6&-1/6\,\sqrt {3
}&0&0&0&0\\\noalign{\medskip}0&0&-1/6\,\ol{w}&1/6\,\sqrt {3}\ol{w}&0&0&0&0&1/6\,
\ol{z}&-1/6\,\sqrt {3}\ol{z}&0&0\\\noalign{\medskip}-1/3&-1/3\,\sqrt {3}&0&0&0&0
&1/3&1/3\,\sqrt {3}&0&0&0&0\\\noalign{\medskip}0&0&1/3&-1/3\,\sqrt {3}
&0&0&0&0&-1/3&1/3\,\sqrt {3}&0&0\\\noalign{\medskip}0&0&0&0&2/3&0&0&0&0
&0&-2/3&0\end {array} \right]
\]
}
The cross-barred hexagon is continuously rigid but is infinitesimally flexible. We refer to this flex as the in-out flex. (See Whiteley \cite{whi-1} for related discussions.) It is straightforward to show
that this  flex extends periodically  as a 1-cell-periodic infinitesimal flex of the framework.

The motif matrix function is obtained by evaluating at $z=w=1$. This has rank $8$
reflecting the four independent $1$-cell-periodic flexes corresponding to
horizontal translation, to vertical translation, to local rotation of the cross-barred hexagon, and to the periodic "in-out" flex.
}

We define the kagome net framework as the three dimensional crystal framework
$\G_{knet} = (F_v, F_e,\bZ^3)$
determined by the motif diagram in Figure 21.
This figure shows a tetrahedron internal to a parallelepiped unit cell
together with six additional non-internal edges, each of which extends a
tetrahedron edge as indicated.

\begin{center}
\begin{figure}[h]
\centering
\includegraphics[width=10cm]{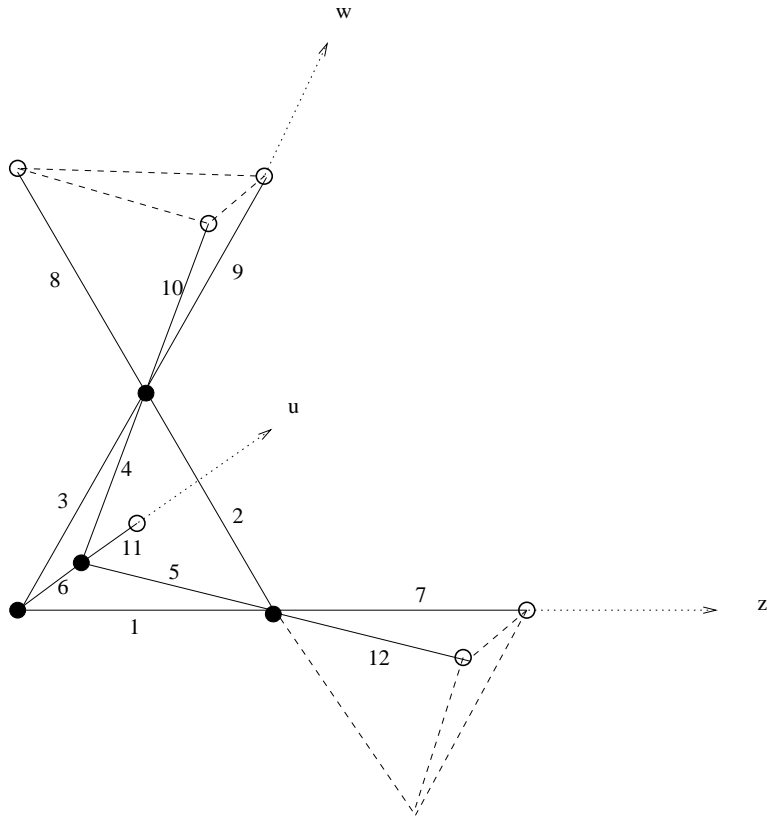}
\caption{A twelve-edged motif for the kagome net, $\G_{knet}$.}
\end{figure}
\end{center}

For the given motif the motif rigidity matrix $R_m(\G_{knet})$
takes the form of a $12$ by $12$ matrix valued function on the $3$-torus.
Different motifs, for the same translation group, give matrix functions
that are related by permutations on columns and monomial multiplication on rows.
It can be shown that the determinant of such a matrix function is equal to a
scalar and monomial multiple of the polynomial
\[
\left(z-1 \right)  \left(w-1 \right)\left( u-1 \right)
\left( z-w \right)    \left( w-u \right)  \left( u-
z \right).
\]
This is in complete analogy with the kagome framework and in the light of earlier discussions may draw the following conclusions.

\begin{thm}
(i) The kagome net $\G_{knet}$ is square-summably isostatic and
possesses  no internal infinitesimal flexes.

(ii) The mode multiplicity function $\mu : \bT^3 \to \bZ_+$ of the kagome net framework
has support equal to the intersection of $\bT^3$ with the six planes
\[
z=1, w=1, u=1, z=w, w=u,u=z.
\]
\end{thm}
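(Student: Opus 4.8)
The plan is to read off the matricial symbol function $\Phi(z,w,u)$ of $\G_{knet}$ from the twelve-edged motif of Figure~21 by the Fourier-coefficient recipe of Theorem~5.1, to establish the factorisation of $\det\Phi$ on $\bT^3$, and then to feed this into the symbol-function criterion of Theorem~5.2 together with Corollary~5.3. For this motif $|F_v|=4$ and $|F_e|=12$, so $d|F_v| = 3\cdot 4 = 12 = |F_e|$; thus $\G_{knet}$ is in Maxwell counting equilibrium and $\Phi$ is a square $12\times 12$ matrix-valued trigonometric function on $\bT^3$. The six interior tetrahedron edges contribute six rows of $\Phi$ with constant entries (precisely the corresponding rows of the motif rigidity matrix), while each of the six tetrahedron-extending edges contributes a row of the standard monomial form, namely a constant row of the motif rigidity matrix whose external-vertex block is multiplied by the inverse cell monomial $\ol{z}^{\,i}\ol{w}^{\,j}\ol{u}^{\,k}$ for the cell occupied by the external vertex. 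In particular $\det\Phi(z,w,u)$ is a Laurent polynomial.

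The heart of the matter is the factorisation
\[
\det\Phi(z,w,u) = c\, z^{\alpha}w^{\beta}u^{\gamma}\,(z-1)(w-1)(u-1)(z-w)(w-u)(u-z),
\]
with $c\neq 0$ and integers $\alpha,\beta,\gamma$, as asserted in the discussion preceding the theorem. I would obtain this by a direct expansion that exploits the block sparsity of $\Phi$: Schur-complement elimination of the six constant interior rows against the corresponding vertex columns reduces $\det\Phi$ to the determinant of a $6\times 6$ matrix function which, by the threefold symmetry among the three coordinate directions, mirrors the structure of the kagome symbol function of Proposition~5.6 and factors accordingly; the three factors $z-1$, $w-1$, $u-1$ record the three independent periodic translation flexes and the three factors $z-w$, $w-u$, $u-z$ the cross-direction rank drops, exactly as for $\G_{kag}$. \emph{This determinant identity is the main obstacle; everything after it is formal.}

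Granting the factorisation, $\det\Phi$ is a nonzero Laurent polynomial, so $\Phi(z,w,u)$ is invertible, whence its column rank and row rank both attain their maximal value $12$, for almost every $(z,w,u)\in\bT^3$. By Theorem~5.2(ii) the framework $\G_{knet}$ is then both square-summably rigid and square-summably stress-free, hence square-summably isostatic. Since $\G_{knet}$ is in Maxwell counting equilibrium, Corollary~5.3 shows that the absence of a nonzero square-summable infinitesimal flex forces the absence of a nonzero internal (finitely supported) infinitesimal flex. This establishes~(i).

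For~(ii), by Definition~5.8 the mode multiplicity function of $\G_{knet}$ associated with this motif is $\mu(z,w,u) = \dim\ker\Phi(z,w,u)$; since $\Phi(z,w,u)$ is a square matrix, $\mu(z,w,u) > 0$ if and only if $\det\Phi(z,w,u) = 0$. On the torus the monomial $z^{\alpha}w^{\beta}u^{\gamma}$ never vanishes and $c\neq 0$, so the zero set of $\det\Phi$ in $\bT^3$ coincides with the zero set of $(z-1)(w-1)(u-1)(z-w)(w-u)(u-z)$, that is, with the intersection of $\bT^3$ with the six planes $z=1$, $w=1$, $u=1$, $z=w$, $w=u$, $u=z$. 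Hence the support of $\mu$ is precisely that intersection, as claimed.
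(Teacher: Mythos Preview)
Your proposal is correct and follows essentially the same route as the paper: assert the determinant factorisation of $\Phi(z,w,u)$ as a nonzero monomial multiple of $(z-1)(w-1)(u-1)(z-w)(w-u)(u-z)$, then invoke Theorem~5.2(ii) and Corollary~5.3 for part~(i) and read off the zero set on $\bT^3$ for part~(ii). In fact the paper itself only states the factorisation as something that ``can be shown'' without further detail, so your sketch of the computation via Schur-complement elimination of the six interior rows and analogy with the planar kagome case in Proposition~5.6 already goes beyond what the paper supplies; your honest flagging of this step as ``the main obstacle'' is entirely appropriate.
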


\medskip


\subsection{Crystallography and Rigid Unit Modes.}
Rigidity theory and a Hilbert space operator viewpoint have led us
to determine, ab initio, the matricial symbol function $\Phi(z)$ of an
abstract crystal  framework and motif.
The zeros of $\det \Phi(z)$
correspond to the phases of infinitesimal periodic-modulo-phase wave flexes and the mode multiplicity function $\mu(w)$ of  $\Phi$ detects  the multiplicities of independent wave flexes for $w$.
There are close connections between these mathematical observations and the  Rigid Unit Modes (RUMs) that are observed in certain material crystals through diffraction experiments. This connection  is part of the motivation  for the formulation of the function $\mu(z)$.
 It seems to us that the explicit algorithm for the passage from crystal motif to matrix function will provide a useful computational
and theoretical tool for identifying  RUMs and their relationships.

Infinitesimal wave flexes  for abstract crystal frameworks  appear in the classical crystallography of Born von Karman theory. For example in the case of variants of
quartz the low energy excitation modes  result from rigid unit motions of $SiO_4$ molecular tetrahedra within a tetrahedral net. These modes are therefore modelled by the first order periodic-modulo-phase infinitesimal rigid motions
of abstract tetrahedral net frameworks.
For this connection see, for example, Chapter 3 of \cite{wil-pry}.
Comparisons of extensive simulations and experimental results  have shown that the RUM modes of simulated  crystals are closely correlated with observed RUMs. For this see the seminal paper of Giddy et al \cite{gid-et-al}, and also, for example, Dove et al \cite{dov-proc} and Goodwin et al \cite{goodwin2008}.

An interesting advance has been obtained recently by Wegner \cite{weg} who has derived mathematically, rather than through simulation, the RUM sets  for various idealised tetrahedral crystals that model the geometry of material crystals, including $\beta$-cristobalite, HP tridymite, $\beta$-quartz, $\alpha$-cristobalite and $\alpha$-quartz.  These models are standard framework models that coincide with crystal bar-joint frameworks in our terminology and the results are obtained by determining vanishing determinants (zero sets of $\det \Phi$ in our formalism) by means of computer algebra and symmetry reductions.  In particular Wegner obtains an analytic derivation of some of the curious  surfaces observed in experiments of Dove et al.





\bibliographystyle{abbrv}
\def\lfhook#1{\setbox0=\hbox{#1}{\ooalign{\hidewidth
  \lower1.5ex\hbox{'}\hidewidth\crcr\unhbox0}}}

\end{document}